\documentclass[12pt]{amsart}

\usepackage{sfilip_presets}
\usepackage{amsmath}
\usepackage{amsthm}

%----------------------------------------------------
%	
%		For figures
%	
\newif \iffig
\figtrue

\iffig
  \usepackage{wrapfig,graphicx}
  
  %	Makes sure figures are numbered within subsections
  %	Note that need to pair the counter \c@figure to equation below
  \numberwithin{figure}{subsection}
\fi

\newenvironment{wrapped}[1]
{\def\wrappedcurrent{#1}%
	\setlength{\columnwidth}{\parshapelength\numexpr\prevgraf+2\relax}%
	\csname #1\endcsname}
{\csname end\wrappedcurrent\endcsname}

%====================================================

%====================================================
%		Typesetting fixes
%====================================================
%
%			Fonts
%
%	{microtype} package will improve typesetting
%	[final] option is so that it also does it in
%	draft mode
\usepackage[stretch=10, final]{microtype}
%
%	For Latin Modern update to Computer Modern see
%	http://tex.stackexchange.com/questions/1390/latin-modern-vs-cm-super
\usepackage{lmodern}
%
%	[T1]{fontenc} adds more glyphs
%	Don't use with XeLaTeX; Just plain LaTeX
%	http://tex.stackexchange.com/questions/664/why-should-i-use-usepackaget1fontenc
%	Can lead to bad PDFs if not used with a modern font
%	http://tex.stackexchange.com/questions/1291/why-are-bitmap-fonts-used-automatically
\usepackage[T1]{fontenc}
%
%	Warning: fontenc + lmodern can sometimes create funny characters
%	especially if non-ASCII stuff appears.

%		Page Layout
%	\usepackage{layout}
%	\layout
%	will print the variables in page layout really nicely.
%
%----------------------------------------------------
%		Theorem page break issues
%
\usepackage{etoolbox}
\usepackage{needspace}

% \break if fewer than 5\baselineskip is available on page
\AtBeginEnvironment{theorem}{\Needspace{5\baselineskip}}

%----------------------------------------------------
%		Table of Contents indentation
\usepackage{titletoc}

\let\oldtocsection=\tocsection
\let\oldtocsubsection=\tocsubsection
\let\oldtocsubsubsection=\tocsubsubsection

\renewcommand{\tocsection}[2]{\hspace{0em}\oldtocsection{#1}{#2}}
\renewcommand{\tocsubsection}[2]{\hspace{1.75em}\oldtocsubsection{#1}{#2}}
\renewcommand{\tocsubsubsection}[2]{\hspace{2em}\oldtocsubsubsection{#1}{#2}}

%----------------------------------------------------
%		Date Fixing
%
%	amsart places the date in a footnote
%	Patch below will place it below author name
\usepackage{etoolbox}
\makeatletter
\patchcmd{\@maketitle}
  {\ifx\@empty\@dedicatory}
  {\ifx\@empty\@date \else {\vskip3ex \centering\footnotesize\@date\par\vskip1ex}\fi
   \ifx\@empty\@dedicatory}
  {}{}
\patchcmd{\@adminfootnotes}
  {\ifx\@empty\@date\else \@footnotetext{\@setdate}\fi}
  {}{}{}
\makeatother

%		Underbraces workaround
%		Use only if it shows very poorly
%
% \usepackage{tikz}
% \usetikzlibrary{decorations.pathreplacing}
% \makeatletter
% \def\underbrace#1{\@ifnextchar_{\tikz@@underbrace{#1}}{\tikz@@underbrace{#1}_{}}}
% \def\tikz@@underbrace#1_#2{\tikz[baseline=(a.base)]
% {\node (a) {\(#1\)}; \draw[thick,line cap=round,decorate,decoration={brace,amplitude=5pt}] (a.south east) -- node[below,inner sep=7pt] {\(\scriptstyle #2\)} (a.south west);}}
% \makeatother

%====================================================

\renewcommand\labelenumi{(\roman{enumi})}
\renewcommand\theenumi\labelenumi

\makeatletter
\renewcommand{\@secnumfont}{\bfseries}
\makeatother
%		Redefining section
%	sections are centered, bold sans-serif
\makeatletter
\renewcommand{\section}{
\@startsection {section}
				   {1}
				   {\z@}%
                                   {-3.5ex \@plus -1ex \@minus -.2ex}%
                                   {2.3ex \@plus.2ex}%
                                   {\centering \normalfont \Large \bfseries \sffamily}}% 
\makeatother

%		Redefining subsection
%	subsections are left-aligned, bold sans-serif
\makeatletter
\renewcommand{\subsection}
{\@startsection{subsection}
				     {2}
				     {\z@}%
                                     {-3.25ex\@plus -1ex \@minus -.2ex}%
                                     {1.5ex \@plus .2ex}%
                                     {\normalfont \large \bfseries \sffamily}}% from \large
\makeatother

%		Redefining subsubsection
%	subsubsections are "2.3.1 Name" with Name in bold, no linebreak for subsequent text
\makeatletter
\renewcommand{\subsubsection}
{\@startsection{subsubsection}
				     {3}
				     {\z@}%
                                     {3.25ex \@plus1ex \@minus.2ex}%
				     {-\fontdimen 2\font }
                                     {\normalfont\normalsize\bfseries}}
\makeatother

%----------------------------------------------------
%		Redefining paragraph
\makeatletter
\renewcommand{\paragraph}{%
\@startsection {paragraph}
				    {4}
				    {\z@}
				    {\z@}
				    {-\fontdimen 2\font }
				    {\normalfont\normalsize\bfseries}
}
\makeatother

%====================================================

%====================================================
%		hyperref setup
%====================================================
%
%	Link typesetting
%
% The hyphens option for {url} will allow breaks in long urls
\PassOptionsToPackage{hyphens}{url}\usepackage{hyperref}

% The code below alows further breaks
\makeatletter
\g@addto@macro{\UrlBreaks}{\UrlOrds}
\makeatother
%
% 	change \href colors
%
\usepackage{color}
\definecolor{darkred}{rgb}{0.4,0,0}
\definecolor{darkgreen}{rgb}{0,0.5,0}
\definecolor{darkblue}{rgb}{0,0,0.4}
%
%	Hyperref options
%
\hypersetup{
    pdftitle={Notes on the Multiplicative Ergodic Theorem},	% title
    pdfauthor={Simion Filip},	% author
    pdfsubject={Ergodic Theory, Dynamical Systems},		% subject of the document
    pdfkeywords={},	% list of keywords
    pdfnewwindow=true,		% links in new window
    colorlinks=true,		% false: boxed links; true: colored links
    linkcolor=darkblue,		% color of internal links (change box color with linkbordercolor)
    citecolor=darkred,		% color of links to bibliography
    filecolor=darkblue,		% color of file links
    urlcolor=darkblue,		% color of external links
    pdfborder={0 0 0},
    breaklinks=true
}
%====================================================

%====================================================
%		Autoref Fixes
%====================================================
%	Some fixes are in the Theorem styles part
%
%	Section references

%	subsection \S gives paragraph symbol
%	the ~ binds them together
\def\subsek~{\S{}}

%	Makes \autoref do "Eqn. (3.1)"
%	instead of "Eqn. 3.1"
\def\equationautorefname~#1\null{%
  Eqn.~(#1)\null
}
%====================================================

%====================================================
%		Theorem styles and numbering
%
%
%--------------- New theorem style ------------------
%	spaces above and below are a bit larger
%	Keeps the . after theorem name
\newtheoremstyle{mytheoremstyle}% name
    {5pt}	                % Space above
    {5pt}                    	% Space below
    {\itshape}                  % Body font
    {}                          % Indent amount
    {\bfseries}                 % Theorem head font
    {.}                         % Punctuation after theorem head
    {.5em}                      % Space after theorem head
    {}  			% Theorem head spec (can be left empty, meaning ‘normal’)

%--------------- New definition/etc style ------------------
%	spaces above and below are a bit larger (usually 3pt)
%	Keeps the . after definition name
%	Makes text normal.
\newtheoremstyle{mydefinitionstyle}% name
    {5pt}	                % Space above
    {5pt}                    	% Space below
    {}                  	% Body font
    {}                          % Indent amount
    {\bfseries}                 % Theorem head font
    {.}                         % Punctuation after theorem head
    {.5em}                      % Space after theorem head
    {}  			% Theorem head spec (can be left empty, meaning ‘normal’)

\swapnumbers			% changes numbering to "2.3.1 Theorem" rather than "Theorem 2.3.1"
\theoremstyle{mytheoremstyle}			% Uses the custom theorem style

\newtheorem{theorem}{Theorem}[subsection]	% Theorem numbers are within subsections

\numberwithin{equation}{subsection}		% Equations are numbered within subsections

\makeatletter					% Makes theorems numbered with the same counter as equations
\let\c@theorem \c@equation
\makeatother

\makeatletter					% Makes subsubsections numbered with the same counter as equations
    \let\c@subsubsection\c@equation
\makeatother

\makeatletter				% Makes figures numbered with the same counter as equations
  \let\c@figure\c@equation		% Note that need to include (above) \numberwithin{figure}{subsubsection}
\makeatother
%		Creates Lemma, Prop, etc. environments
%	Fixes \autoref numberings, creates alias counters
%	Now \autoref will produce Lemma 2.3.1 etc.
%
\usepackage{aliascnt}

\newaliascnt{lemma}{theorem}  
\newtheorem{lemma}[lemma]{Lemma}  
\aliascntresetthe{lemma}

\newaliascnt{proposition}{theorem}
\newtheorem{proposition}[proposition]{Proposition}
\aliascntresetthe{proposition}

\newaliascnt{corollary}{theorem}  
\newtheorem{corollary}[corollary]{Corollary}  
\aliascntresetthe{corollary}

%		Changes to custom defintion (normal text) style
\theoremstyle{mydefinitionstyle}		

\newaliascnt{exercise}{theorem}  
\newtheorem{exercise}[exercise]{Exercise}  
\aliascntresetthe{exercise}

\newaliascnt{definition}{theorem}  
\newtheorem{definition}[definition]{Definition}  
\aliascntresetthe{definition}

\newaliascnt{remark}{theorem}  
\newtheorem{remark}[remark]{Remark}  
\aliascntresetthe{remark}

\newaliascnt{example}{theorem}  
\newtheorem{example}[example]{Example}  
\aliascntresetthe{example}

\newaliascnt{question}{theorem}  
  
\aliascntresetthe{question}

%	Command to show all counters (for debugging)
%\newcommand	{\allcounters}	{Sec:\arabic{section}, Subsec:\arabic{subsection}, Thm:\arabic{theorem}}

%		Named theorem style
%
%	Has to be done for each customized name!
%	Should be done *after* the regular theorem namings
%
%	Define \newtheoremstyle{custom_thm_name}
%	Use template below.
%
%	In the text, write
%	\begin{custom_thm}[info, e.g. a reference]
% 	 
% 	\end{custom_thm}
%
% \newtheoremstyle{semisimplicity} 	% name
%     {5pt}                    		% Space above
%     {5pt}                    		% Space below
%     {\itshape}                   	% Body font
%     {}                           	% Indent amount
%     {\bfseries}                   	% Theorem head font
%     {}                          	% Punctuation after theorem head
%     {.5em}                       	% Space after theorem head
%     {}  				% Theorem head spec (can be left empty, meaning ‘normal’)
% \theoremstyle{semisimplicity}
% \newtheorem{semisimplicitytheorem}[theorem]{Semisimplicity Theorem}

%	Command to show all counters
%\newcommand	{\allcounters}	{Sec:\arabic{section}, Subsec:\arabic{subsection}, Thm:\arabic{theorem}}

%====================================================
%		Debugging
%====================================================
%
\newif\ifdebug
%\debugtrue
\debugfalse

%	FixMe options
% 	Can use
% 		\fxnote
% 		\fxwarning
% 		\fxerror

\ifdebug
%	Line numbers
%  \usepackage{lineno}
%  \linenumbers
%	Fixme
  \usepackage[status=draft,author=]{fixme}
  \fxsetup{inline, marginclue,theme=color}
\else
  \usepackage[status=final, author=]{fixme}
\fi
%====================================================

%====================================================
%		Custom Environments
%====================================================

%----------------------------------------------------
%	Description environment, adapted to 
%	allow for different widths
%	The parameter is the longest label.

\newenvironment{custom_description}[1]%
  {\begin{list}{}%
   {\renewcommand\makelabel[1]{##1:\hfill}%
   \settowidth\labelwidth{\makelabel{#1}}%
   \setlength\leftmargin{\labelwidth}%
   \addtolength\leftmargin{\labelsep}}}%
  {\end{list}}
%----------------------------------------------------

%----------------------------------------------------
%	{enumerate} labels will appear as (i), (ii) etc.
%
\renewcommand{\labelenumi}{\theenumi}%
\renewcommand{\theenumi}{(\roman{enumi})}%
%----------------------------------------------------

%====================================================

%====================================================
%====================================================
%		Start of Main Document
%====================================================
%====================================================
\begin{document}
%====================================================
%		Title matters
%	also: Add it in the hyperref setup
%
\title[Notes on MET]{Notes on the Multiplicative Ergodic Theorem}
%
%	change to \title[short title]{Actual title}
%	if what appears on headers is too long
%====================================================
%		Date of revision
\thanks{Revised \textsc{\today} }

%-----------Date First posted---------------------
\date{			}

%====================================================
%		Author info
\author{
Simion Filip
}
%		Address
\address{
\parbox{0.5\textwidth}{
Department of Mathematics\\
Harvard University\\
Cambridge, MA 02139\\}
	}
\email{{sfilip@math.harvard.edu}}
%
%====================================================
%
%		Abstract
%
\begin{abstract}
The Oseledets Multiplicative Ergodic theorem is a basic result with numerous applications throughout dynamical systems.
These notes provide an introduction to this theorem, as well as subsequent generalizations.
They are based on lectures at summer schools in Brazil, France, and Russia.
\end{abstract}
%====================================================

%====================================================
%
%		Title, TOC, FixMe
%
\maketitle
%
%		TOC
\noindent\hrulefill
\tableofcontents
%\nointerlineskip
\noindent \hrulefill
%====================================================

%		Things to be fixed
\ifdebug
  \listoffixmes
\fi
%====================================================

\section{Introduction}

The Oseledets multiplicative ergodic theorem is a basic result with applications throughout dynamical systems.
It was first proved by Oseledets \cite{Oseledets}, with previous work on random multiplication of matrices by Furstenberg and Furstenberg--Kesten \cite{Furstenberg_Kesten}.

In its basic form, it describes the asymptotic behavior of a product of matrices, sampled from a dynamical system.
For instance, start with two fixed $d\times d$ matrices $A$ and $B$ and flip a coin with two sides $ n $ times to obtain a random product of matrices of the form $C_n:=ABAABA\ldots $ writing $A$ for heads and $B$ for tails (new matrices are added on the left of the sequence).
The usual law of large numbers describes the frequency of $A$'s and $B$'s, while its noncommutative version describes the resulting product matrix.

The description is in terms of a collection of numbers $\lambda_1\geq \cdots \geq \lambda_d$ called Lyapunov exponents, and a collection of (random) nested subspaces $V^{\leq \lambda_i}\subset \bR^d$ called Lyapunov subspaces.
Applying the (random) matrix $C_n$ to a vector $v\in V^{\leq\lambda_i}\setminus V^{\leq \lambda_{i+1}}$ expands it by a factor of about $e^{\lambda_i \cdot n}$.

\paragraph{Smooth Dynamics}
One major motivation for a matrix version of the ergodic theorem was smooth dynamics.
Starting from a diffeomorphism of a manifold, its induced action on the tangent bundle can be viewed as a collection of linear maps at each point of the manifold.
Iterating the diffeomorphism, one studies the successive multiplication of the corresponding matrices along the orbit.

Pesin theory \cite{Pesin_families,Pesin_characteristic} describes the local behavior of the iterates of the diffeomorphism in terms of the Lyapunov exponent and subspaces.
The dimension of an invariant measure, the Lyapunov exponents, and the entropy of the diffeomorphism are related by the Ledrappier--Young formula \cite{LY1,LY2}.

\paragraph{Extensions}
Ruelle \cite{Ruelle} extended the setting of the Oseledets theorem to infinite dimensional Hilbert spaces.
A geometric interpretation of the original theorem was later given by Kaimanovich \cite{Kaimanovich}, viewing a matrix as an isometry of a symmetric space.
The statement of the Oseledets theorem became equivalent to one about divergence of points along a geodesic in the symmetric space.

The work of Karlsson--Margulis \cite{Karlsson_Margulis} extended the scope of the theorem to isometries of non-positively curved metric spaces.
The statement is again in terms of divergence of points along a geodesic.

A subsequent extension by Karlsson--Le\-drap\-pier \cite{Karlsson_Ledrappier} included \emph{all} proper metric spaces.
Divergence along a geodesic was replaced by linear divergence of the values of a horofunction.
Finally, Gou\"ezel--Karlsson \cite{Gouezel_Karlsson} extended the last result to include semi-contractions, and not just isometries of a metric space.

\paragraph{Applications to Rigidity}
One step in a proof of Margulis superrigidity \cite{Margulis_arithmeticity} uses the Oseledets theorem.
It views the collection Lyapunov subspaces as a point on a flag manifold, which is the boundary of a symmetric space.
The Oseledets theorem then gives a boundary map, which is a starting point for many proofs of rigidity statements.
This point of view is detailed in Zimmer's work \cite{Zimmer}.
Monod's approach \cite{Monod} to rigidity in non-positive curvature is reversed in \autoref{sec:mean_MET} to give a mean non-commutative ergodic theorem.

\paragraph{Textbook treatments}
A transparent discussion of the Oseledets theorem with many applications is in the lecture notes of Ledrappier \cite{Ledrappier_lectures}.
Some monographs which deal with this circle of questions are the ones by Katok--Hasselblatt \cite{KH}, Ma\~{n}\'{e} \cite{Mane}, Viana \cite{Viana}, and Zimmer \cite{Zimmer}.
A treatment of the Oseledets theorem similar to the one in the present text appears in the notes of Bochi \cite{Bochi_notes}.
The notes of Karlsson \cite{Karlsson_notes} are another useful source, with many ideas there influencing the presentation here.

\paragraph{Outline}
\autoref{sec:OseledetsMET} discusses the classical form of the Oseledets theorem.
The presented proof is based on some geometric arguments and avoids the Kingman subadditive ergodic theorem.

\autoref{sec:geometric_Oseledets} contains a more geometric point of view on the Oseledets theorem, due to Kaimanovich \cite{Kaimanovich}.
It is formulated in terms of sequences of points tracking geodesics in symmetric spaces associated to Lie groups.

\autoref{sec:general_ncet} contains a very general non-commutative ergodic theorem that applies to isometries of general proper metric spaces.
The linear divergence to infinity is now detected by a horofunction, rather than linear tracking of a geodesic.

\autoref{sec:mean_MET} discusses a mean version of non-commutative ergodic theorems in non-positively curved metric spaces.
The general notion of ``direct integral'' of metric spaces allows for a rather elementary treatment.

\section{Oseledets Multiplicative Ergodic Theorem}
\label{sec:OseledetsMET}

\subsection{Basic ergodic theory}

For a thorough introduction to ergodic theory, one can consult the monographs of Katok--Hasselblatt \cite{KH}, Einsiedler--Ward \cite{Einsiedler_Ward}, or Walters \cite{Walters}.

\subsubsection{Notation}
Let $\Omega$ be a separable, second-countable metric space, $\cB$ its Borel $\sigma$-algebra and $\mu$ a probability measure on $\Omega$.
Let $T:\Omega\to \Omega$ be a measurable transformation, i.e. if $A$ is measurable then $T^{-1}(A)$ is also measurable.
Define the push-forward measure $T_*\mu$ by
\begin{align*}
T_*\mu(A) := \mu\left(T^{-1}(A)\right) \textrm{ where }A\in \cB.
\end{align*}
The measure $\mu$ is $T$-invariant if $T_*\mu=\mu$ and in this case $(\Omega,\cB,\mu,T)$ is called a probability measure-preserving system.
This will also be denoted below as $T\curvearrowright (\Omega,\mu)$.

\subsubsection{Ergodicity}
By definition $(\Omega,\cB,\mu,T)$ is ergodic if the only measurable $T$-invariant sets have either full or null measure.
In other words, if $T^{-1}(A)=A$ then $\mu(A)=0$ or $1$.
Equivalently, if $\Omega=A\coprod B$ is a $T$-invariant decomposition into measurable sets, then $\mu(A)=1$ or $\mu(B)=1$.
Using the ergodic decomposition (\cite[Ch. 4.2]{Einsiedler_Ward}) most questions can be reduced to ergodic systems.

A starting point for many results is the Birkhoff, or Pointwise Ergodic Theorem.
\begin{theorem}[Birkhoff Pointwise Ergodic Theorem]
  \label{thm:Birkhoff}
 Assume $T\curvearrowright(\Omega,\mu)$ is an ergodic probability measure-preserving system and let $f\in L^1(\Omega,\mu)$ be an integrable function.
 Then for $\mu$-a.e. $\omega\in \Omega$ we have
 \begin{align*}
 \lim_{N\to \infty} \frac 1N \sum_{i=0}^{N-1} f(T^i\omega) \to \int_{\Omega} fd\mu.
 \end{align*}
\end{theorem}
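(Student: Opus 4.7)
The plan is to establish the existence of the limit, observe that it must be constant by ergodicity, and identify the constant as $\int f\, d\mu$ via a maximal inequality. This is the classical route, avoiding Kingman's subadditive theorem (in keeping with the preface of \autoref{sec:OseledetsMET}).

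\textbf{Step 1: Reduction to two constants.} Set
\begin{align*}
\overline{f}(\omega) &:= \limsup_{N\to\infty}\tfrac{1}{N}\sum_{i=0}^{N-1} f(T^i\omega), \\
\underline{f}(\omega) &:= \liminf_{N\to\infty}\tfrac{1}{N}\sum_{i=0}^{N-1} f(T^i\omega).
\end{align*}
Using $\tfrac{1}{N}\sum_{i=0}^{N-1}f(T^{i+1}\omega) = \tfrac{N+1}{N}\cdot\tfrac{1}{N+1}\sum_{i=0}^{N}f(T^{i}\omega) - \tfrac{1}{N}f(\omega)$ and letting $N\to\infty$, one gets $\overline{f}\circ T=\overline{f}$ and $\underline{f}\circ T=\underline{f}$ pointwise. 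Ergodicity then forces $\overline{f}\equiv \overline{c}$ and $\underline{f}\equiv \underline{c}$ for constants $\underline{c}\leq \overline{c}$, and the theorem reduces to showing $\underline{c}=\overline{c}=\int f\,d\mu$.

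\textbf{Step 2: Maximal ergodic inequality.} The technical heart is the following: for any $g\in L^1(\Omega,\mu)$, if $S_N^g(\omega):=\sum_{i=0}^{N-1}g(T^i\omega)$ and $E:=\{\omega:\sup_{N\geq 1}S_N^g(\omega)>0\}$, then $\int_E g\, d\mu \geq 0$. I would prove this by the standard telescoping trick: truncate to $E_M:=\{\max_{1\leq k\leq M}S_k^g>0\}$, write $g^+(\omega):=\max(0,g(\omega),S_2^g(\omega),\dots,S_M^g(\omega))$, and observe $g^+ - g^+\circ T \leq g$ on $E_M$ while $g^+\geq 0$ everywhere. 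Integrating, using $T$-invariance of $\mu$, and letting $M\to\infty$ by monotone convergence yields the claim. This combinatorial step is the main obstacle — everything else in the proof is essentially formal.

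\textbf{Step 3: Sandwich.} For each $\alpha>\overline{c}$ the set $\{\overline{f}\geq \alpha\}$ is null; for $\alpha<\overline{c}$ it is $T$-invariant with full measure. Apply the maximal inequality to $g:=f-\alpha$ (with $\alpha<\overline{c}$): on the full-measure set $\{\overline{f}>\alpha\}$, some partial average exceeds $\alpha$, so $\sup_N S_N^{g}>0$, giving $\int(f-\alpha)\,d\mu\geq 0$, i.e.\ $\int f\,d\mu\geq \alpha$. Letting $\alpha\nearrow \overline{c}$ yields $\int f\,d\mu\geq \overline{c}$. The symmetric argument applied to $-f$ (using $-\underline{f}=\overline{(-f)}$) gives $\int f\,d\mu\leq \underline{c}$. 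Combining with $\underline{c}\leq\overline{c}$ forces equality throughout, which is the theorem.

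\textbf{Remarks on refinements.} To keep the argument clean I would first carry it out for bounded $f$, where no integrability issues arise, and then extend to $f\in L^1$ by approximating $f$ in $L^1$ by bounded functions and using the $L^1$-maximal inequality (a consequence of Step 2 applied to $|f|-\alpha$) to control the error, as is standard. The one place to be careful is that invariance in Step 1 must be checked on a measurable set of full measure; this is automatic once $f$ is integrable, since the exceptional sets where $\tfrac{1}{N}f(T^N\omega)\not\to 0$ are null.
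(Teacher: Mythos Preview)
Your argument is the standard maximal-inequality proof and is essentially correct as written; the only cosmetic point is that in Step~2 the inequality $g^+ - g^+\circ T \leq g$ should be combined with $g^+=0$ on $E_M^c$ and $\int g^+\,d\mu = \int g^+\circ T\,d\mu$ to conclude $\int_{E_M}g\,d\mu\geq 0$, which you gesture at but do not fully spell out.

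However, there is nothing to compare against: the paper does not prove \autoref{thm:Birkhoff}. It is stated as a background result, with the reader referred to the monographs of Katok--Hasselblatt, Einsiedler--Ward, and Walters cited at the start of the section. Your proof is precisely the one found in those references (the Garsia maximal-lemma route), so in that sense it is in keeping with the paper's implicit appeal to standard sources, but the paper itself simply takes Birkhoff as given and moves on to the multiplicative theorem.
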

The quantity on the right is the ``space average'' of $f$, while the quantity on the left is the ``time average''.

\begin{remark}
\label{rmk:minus_infty}
 One can slightly relax the $L^1$-integrability assumption above.
 Let $f=f^+ + f^-$ with $f^+\geq 0$ and $f^-\leq 0$.
 If $f^+\in L^1(\Omega,\mu)$, then the same conclusion in \autoref{thm:Birkhoff} holds, except that $-\infty$ is allowed as a limit.
\end{remark}

\subsubsection{Variant}
Instead of additive averages, one can also consider multiplicative ones.
With the notation as in the theorem, set $g:=\exp(f)$ (equivalently, $f=\log g$).
Then for a.e. $\omega\in \Omega$ we have
\[
\lim_{N\to \infty} \left( g(\omega)g(T\omega)\cdots g(T^{N-1}\omega)\right)^{1/N} \to \exp\left(\int_{\Omega}\log g \, d\mu\right)
\]

\subsection{The Oseledets theorem}

\begin{example}
Suppose that $M$ is a smooth manifold and $F:M\to M$ is a smooth diffeomorphism preserving a measure $\mu$ (for instance, a volume form).
The diffeomorphism induces a map on the tangent bundle of $M$; if $p\in M$ is a point then
\begin{align*}
D_p F: T_p M \to T_{F(p)}M \textrm{ is a linear map.}
\end{align*}
In analogy with the Birkhoff ergodic theorem, one can inquire about the asymptotic behavior of the $N$-fold composition
\begin{align*}
D_{F^{N-1}(p)}F \circ \cdots \circ D_p F : T_p M \to T_{F^{N}(p)}M
\end{align*}
An answer is given by the Oseledets Multiplicative Ergodic theorem.
\end{example}

\subsubsection{Vector bundles}
For $(\Omega,\cB,\mu)$ consider vector bundles $V\to \Omega$.
By definition, this is a collection of sets $U_\alpha$ which cover $\Omega$, as well as gluing maps $\phi_{\alpha,\beta}:U_{\alpha}\cap U_{\beta} \to \GL_n\bR$ satisfying the compatibility condition $\phi_{\gamma,\alpha}\cdot \phi_{\beta,\gamma}\cdot \phi_{\alpha,\beta} = \id$.
The vector bundle $V$ is then defined by gluing the pieces $U_\alpha\times \bR^n$ using the identifications:
\begin{align*}
V := \left( \coprod_{\alpha} U_\alpha \times \bR^n\right) / \raisebox{-.2em}{$(\omega,v)\sim (\omega,\phi_{\alpha,\beta}v) \textrm{ for } \omega \in U_\alpha\cap U_\beta$}
\end{align*}
The sets $U_\alpha$ can be measurable or open.
Similarly, the gluing maps can be measurable, continuous, smooth, depending on the qualities of $\Omega$.

Any vector bundle can be measurably trivialized, i.e. it is measurably isomorphic to $\Omega\times\bR^n$.
However, intrinsic notation will be more useful in the sequel.
For $\omega\in \Omega$ using it as subscript, e.g. $V_\omega$, will denote the fiber of $V$ over $\omega$.

\subsubsection{Cocycles.}
Let $T:\Omega\to \Omega$ be a probability measure preserving transformation, and $V\to \Omega$ a vector bundle.
Then $V$ is a \emph{cocycle} over $T$ if the action of $T$ lifts to $V$ by linear transformations.
In other words, there are linear maps
\[
T_\omega:V_\omega \to V_{T\omega}
\]
and these maps vary measurably, continuously, or smoothly with $\omega$.
For simplicity, assume that $T_\omega$ is always \emph{invertible}.

\subsubsection{Oseledets Multiplicative Ergodic Theorem.}
To motivate the next result, consider a single matrix $A$ acting on $\bR^n$, say with positive eigenvalues $e^{\lambda_1}>e^{\lambda_2}>\cdots > e^{\lambda_k}$ (perhaps with multiplicities).
For a vector $v\in \bR^n$, consider the behavior of $\norm{A^Nv}$ as $N$ gets large.
For typical $v$, this will grow at rate $e^{N\cdot \lambda_1}$.
But for $v$ contained in the span of eigenvectors with eigenvalues $e^{\lambda_2}$ or less, the growth rate of $\norm{A^Nv}$ will be different.
Repeating the analysis gives a filtration of $\bR^n$ by subspaces with different order of growth under iteration of $A$.

\begin{remark}
 Throughout these notes, a metric on a vector space or vector bundle will mean a symmetric, possitive-definite bilinear form.
 In other words, it is a positive-definite inner product.
\end{remark}

The next result is a generalization of the Birkhoff theorem to cocycles over general vector bundles.
\begin{theorem}[Oseledets Multiplicative Ergodic Theorem]
  \label{thm:Oseledets}
 Suppose $V\to (\Omega,\cB,\mu,T)$ is a cocycle over an ergodic probability measure-preserving system.
 Assume that $V$ is equipped with a metric $\norm{-}$ on each fiber such that
 \begin{align}
   \label{eqn:cond_L1_bdd_cocycle}
   \int_{\Omega} \log^+\norm{T_\omega}_{op} d\mu(\omega) < \infty
 \end{align}
 Here $\log^+(x):=\max(0,\log x)$ and $\norm{-}_{op}$ denotes the operator norm of a linear map between normed vector spaces.
 
 Then there exist real numbers $\lambda_1>\lambda_2>\cdots >\lambda_k$ (with perhaps $ \lambda_k = -\infty $) and $T$-invariant subbundles of $V$ defined for a.e. $\omega\in \Omega$:
 \[
 0\subsetneq V^{\leq \lambda_k} \subsetneq \cdots \subsetneq V^{\leq \lambda_1} = V
 \]
 such that for vectors $v\in V^{\leq \lambda_i}_\omega\setminus V^{\leq \lambda_{i+1}}_\omega$ we have
 \begin{align}
 \label{eqn:growthofv}
   \lim_{N\to \infty} \frac 1N \log \norm{T^N v} \to \lambda_i
 \end{align}
\end{theorem}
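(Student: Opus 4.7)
The plan is to prove the theorem by analyzing the polar decomposition of the iterated cocycle $T^N_\omega$ and extracting the Lyapunov filtration as the limit of its singular value decomposition. This geometric route, working directly on positive self-adjoint operators, replaces the usual appeal to the Kingman subadditive ergodic theorem on exterior powers.

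First I would establish the top Lyapunov exponent. Set $a_N(\omega) := \log \|T^N_\omega\|_{op}$, which satisfies the subadditivity $a_{N+M}(\omega) \leq a_N(T^M\omega) + a_M(\omega)$; the hypothesis \eqref{eqn:cond_L1_bdd_cocycle} ensures $a_N^+$ is integrable. Applying \autoref{thm:Birkhoff} (using the extension in \autoref{rmk:minus_infty}) to the Birkhoff averages of $a_M/M$ for each fixed $M$, and then optimizing over $M$, one obtains $\tfrac{1}{N} a_N(\omega) \to \lambda_1$ almost surely, with $\lambda_1 \in [-\infty,\infty)$ constant by ergodicity. This produces $\lambda_1$ and the top growth bound.

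Second I would construct the filtration from the positive self-adjoint operator $P^N_\omega := (T^N_\omega)^* T^N_\omega$ on $V_\omega$, whose eigenvalues $e^{2s_1^{(N)}} \geq \cdots \geq e^{2s_d^{(N)}}$ are the squared singular values of $T^N_\omega$ and whose orthonormal eigenbasis $e_1^{(N)}(\omega), \ldots, e_d^{(N)}(\omega)$ records the principal directions of growth. For each $i$ let
\[
F_i^{(N)}(\omega) := \operatorname{span}\bigl(e_{d-i+1}^{(N)}(\omega),\, \ldots,\, e_d^{(N)}(\omega)\bigr),
\]
the $i$-dimensional subspace on which $T^N_\omega$ acts with smallest expansion. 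The claim is that the normalized singular values $s_j^{(N)}/N$ cluster around finitely many values $\lambda_1 > \cdots > \lambda_k$, and that at each dimension $i$ where the cluster changes the flag $F_i^{(N)}(\omega)$ converges in the Grassmannian; the limiting subspace is then declared to be the Oseledets subspace $V^{\leq \lambda_j}_\omega$ at the corresponding level.

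The crux, and the main obstacle, is proving convergence of these flags. Geometrically, $P^N_\omega$ is a point in the non-positively curved symmetric space of positive-definite forms on $V_\omega$; the convergence of the $F_i^{(N)}$ amounts to the statement that $P^N_\omega$ moves asymptotically along a geodesic ray with linear speed and that the directions of its principal axes stabilize. Angular contraction in negative curvature reduces this to a linear rate of separation between consecutive clusters of singular values, which I would establish inductively on $\dim V$: the top-exponent step isolates $\lambda_1$ together with its expanding subspace, and applying the same reasoning to the induced cocycle on the appropriate invariant quotient produces the next gap. Once convergence of the flags is secured, $T$-equivariance of the filtration is a formal consequence of the intertwining $T^{N+1}_\omega = T_{T^N\omega}\circ T^N_\omega$ together with uniqueness of the limit, and the sharp growth \eqref{eqn:growthofv} for $v \in V^{\leq \lambda_i}_\omega \setminus V^{\leq \lambda_{i+1}}_\omega$ follows by expanding $v$ in the limiting eigenbasis and reading off the dominant singular value acting on its support.
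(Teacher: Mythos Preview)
Your proposal has two genuine gaps. First, your derivation of $\lambda_1$ is incomplete: applying \autoref{thm:Birkhoff} to $a_M/M$ for fixed $M$ and then optimizing over $M$ yields only the upper bound $\limsup_N \tfrac{1}{N}a_N(\omega) \leq \inf_M \tfrac{1}{M}\int_\Omega a_M\,d\mu$, via the telescoping $a_N \leq \sum_{j=0}^{q-1} a_M(T^{jM}\omega) + a_r(T^{qM}\omega)$. The matching lower bound $\liminf_N \tfrac{1}{N}a_N(\omega) \geq \lambda_1$ is precisely the nontrivial half of Kingman's theorem and does not follow from Birkhoff alone, so you have not actually bypassed the subadditive ergodic theorem. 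Second, your inductive scheme for flag convergence is circular: the spaces $F_i^{(N)}(\omega)$ are not $T$-invariant for finite $N$, so there is no ``induced cocycle on the appropriate invariant quotient'' available until \emph{after} you have proved that the limits exist and are $T$-equivariant---which is exactly what the induction was supposed to establish. The heuristic of angular contraction in non-positive curvature is real (it is the content of Kaimanovich's criterion, \autoref{thm:reg_pts}), but making it rigorous requires first knowing that the normalized Cartan projection $\tfrac{1}{N}r(x_N)$ converges in $\fraka^+$, and that again is Kingman applied to each exterior power $\Lambda^k V$ (compare the proof of \autoref{thm:Oseledets_geometric}).

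The paper's proof in \autoref{sec:OseledetsMET} takes an entirely different route that genuinely avoids Kingman. It inducts on $\dim V$ using a dichotomy (\autoref{lemma:dichotomy}) obtained by minimizing $\int_{\bP(V)} \log(\norm{Tv}/\norm{v})\,d\eta$ over $T$-invariant measures $\eta$ on the projective bundle $\bP(V)$: the support of a minimizing ergodic measure either spans a proper invariant subbundle, or certifies a single exponent on all of $V$. This is combined with a splitting lemma (\autoref{lemma:splitting}) for short exact sequences in which the sub has strictly larger exponent than the quotient. Your singular-value approach is closer in spirit to the alternative proof in \autoref{sec:geometric_Oseledets}, but that version explicitly invokes \autoref{thm:Kingman}; if you want to pursue it, you should either accept Kingman as input or supply an independent argument for the a.e.\ lower bound on $\tfrac{1}{N}\log\norm{\Lambda^k T^N_\omega}_{op}$.
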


\begin{remark}
 \leavevmode
 \begin{enumerate}
  \item The invariance of the subbundles $V^{\leq \lambda_i}$ means that $T_\omega:V_{\omega}\to V_{T\omega}$ takes $V^{\leq \lambda_i}_\omega \to V^{\leq \lambda_i}_{T\omega}$.
  The filtration $V^{\leq \lambda_\bullet}$ will be called the \emph{forward Oseledets filtration}.
  The numbers $\{\lambda_i\}$ are called \emph{Lyapunov exponents}.
  \item The multiplicity of an exponent $\lambda_i$ is defined to be $\dim V^{\leq \lambda_i}-\dim V^{\leq \lambda_{i+1}}$.
  Later it will be convenient to list exponents repeating them with their appropriate multiplicity.
  \item The Oseledets filtration and exponents are \emph{canonical}, since they are defined by the property in \autoref{eqn:growthofv}.
  If we have an exact sequence of bundles $ V\to W \to W/V $ then the exponents of $ W $ are the union of those in $ V $ and $ W/V $.
  To see this, one can apply successively \autoref{lemma:splitting} below to the filtration on $ W $ coming from the Oseledets filtrations on $ V $ and $ W/V $ (a similar construction will appear in the proof of the Oseledets theorem below).
 \end{enumerate}
\end{remark}

\subsubsection{Variant.}
Suppose now that $T:\Omega\to \Omega$ is an \emph{invertible} map, and the cocycle on $V$ for the map $T^{-1}$ satisfies the same assumptions as in \hyperref[thm:Oseledets]{the Oseledets Theorem \ref*{thm:Oseledets}}.
Applying the result to the inverse operator gives a set of $ k' $ exponents $\eta_j$ and the \emph{backwards} Oseledets filtration $V^{\leq \eta_j}$.
By construction if $v\in V^{\leq \eta_j}\setminus V^{\leq \eta_{j+1}}$ then
  \begin{align*}
  \lim_{N\to \infty} \frac 1 N \log \norm{T^{-N} v} \to \eta_j
  \end{align*}
  The only way for this to be compatible with the forward behavior of the vectors is if $\eta_j=-\lambda_{k+1-j}$ and $ k'=k $.
  Moreover, defining $V^{\lambda_j}:=V^{\leq \lambda_j}\cap V^{\leq \eta_{k+1-j}}$ gives a $T$-invariant direct sum decomposition
  \begin{align*}
   V = V^{\lambda_1} \oplus \cdots \oplus V^{\lambda_k}
  \end{align*}
  The defining dynamical property of this decomposition is that
  \begin{align*}
   0\neq v \in V^{\lambda_i} \Leftrightarrow \lim_{N\to \pm \infty} \frac 1N \log \norm{T^N v} = \lambda_i
  \end{align*}
  Note that when $N$ goes to $-\infty$, the sign in the $\frac 1N$ factor changes.
  
\begin{remark}
	\label{rmk:line_bdl}
 Suppose that the vector bundle $V$ is $1$-dimensional.
 The Oseledets theorem is then equivalent to the Birkhoff theorem.
 To see this, define
 \[
 f(\omega):= \log \frac{\norm{T_\omega v}}{\norm{v}} \textrm{ which is independent of the choice of } 0\neq v\in V_\omega.
 \]
 The integrability condition on the cocycle from \autoref{eqn:cond_L1_bdd_cocycle} is equivalent to the integrability of $f^+:=\max(0,f)\in L^1(\Omega,\mu)$.
 Then the Birkhoff theorem gives
 \begin{align*}
 \frac 1 N \left( f(\omega)+\cdots + f(T^{N-1}\omega) \right) = \frac 1N \log \frac {\norm{T^N v}}{\norm{v}} \to \int_\Omega f d\mu
 \end{align*}
 It follows that 
 \begin{align*}
 \frac 1N \log \norm{T^Nv} \to \int_\Omega f d\mu = \lambda_1
 \end{align*}
 Conversely, given $f\in L^1(\Omega,\mu)$, define $T:\Omega\times \bR \to\Omega\times \bR$ by 
 \begin{align*}
 T(\omega,v)=\left(T\omega,\exp(f(\omega))\cdot v\right)
 \end{align*}
 and use the standard norm on $\bR$ to deduce the Birkhoff theorem.
\end{remark}

\subsection{Proof of the Oseledets theorem}

The proof of \hyperref[thm:Oseledets]{the Oseledets Theorem \ref*{thm:Oseledets}} in this section will involve two preliminary results, stated below as \autoref{lemma:dichotomy} and \autoref{lemma:splitting}.
These will be proved in separate sections below.
The setup and notation is from \autoref{thm:Oseledets} above.

\begin{lemma}[Reducibility or growth dichotomy]
  \label{lemma:dichotomy}
  At least one, but perhaps both, of the following possibilities occur.
  \begin{enumerate}
   \item There exists $\lambda\in\bR$ such that for a.e. $\omega\in \Omega$ and for all nonzero $v\in V_\omega$
   \begin{align*}
   \lim_{N\to \infty} \frac 1 N\log \norm{T^N v} \to \lambda
   \end{align*}
   and the limit is uniform over all $ v $ with $ \norm{v}=1 $, and fixed $ \omega $.
   \item There exists a nontrivial proper $T$-invariant subbundle $E\subsetneq V$ which is defined at $\mu$-a.e. $\omega$.
  \end{enumerate}
  In other words, either there is a nontrivial subbundle with $0<\dim E < \dim V$, or vectors in $V$ exhibit growth with just one Lyapunov exponent.
\end{lemma}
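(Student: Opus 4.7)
The plan is to dichotomize according to whether the extremal singular-value growth rates of $T^N_\omega$ coincide. Put
\[
\Lambda^{\max}(\omega) := \limsup_N \tfrac 1N \log\|T^N_\omega\|_{op}, \qquad \Lambda^{\min}(\omega) := \liminf_N \tfrac 1N \log \sigma_{\min}(T^N_\omega),
\]
where $\sigma_{\min}(T^N_\omega) = \|(T^N_\omega)^{-1}\|_{op}^{-1}$ is the smallest singular value. Submultiplicativity of the operator norm together with a.e.\ finiteness of $\log\|T_\omega\|$ shows both functions are $T$-invariant up to null sets, so by ergodicity they are almost surely equal to constants $\Lambda^{\min}\leq\Lambda^{\max}$, with $\Lambda^{\max}<\infty$ by the integrability assumption and $\Lambda^{\min}\in[-\infty,\infty)$. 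For every nonzero $v\in V_\omega$, the standard sandwich $\sigma_{\min}(T^N)\|v\|\leq\|T^N v\|\leq\|T^N\|_{op}\|v\|$ forces
\[
\Lambda^{\min} \;\leq\; \liminf_N \tfrac 1N \log\|T^N v\| \;\leq\; \limsup_N \tfrac 1N \log\|T^N v\| \;\leq\; \Lambda^{\max}.
\]

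If $\Lambda^{\min}=\Lambda^{\max}=:\lambda$, then combining $\sigma_{\min}(T^N)\leq\sigma_{\max}(T^N)$ with the coincidence of $\liminf$ and $\limsup$ forces both extremes to have honest limit $\lambda$ on the $\tfrac 1N\log$ scale. The sandwich collapses to $\tfrac 1N\log\|T^N v\|\to\lambda$ uniformly on the unit sphere of $V_\omega$, yielding conclusion~(i).

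Assume henceforth $\Lambda^{\min}<\Lambda^{\max}$, fix any $\lambda'\in(\Lambda^{\min},\Lambda^{\max})$, and set
\[
E_\omega := \bigl\{v\in V_\omega : \limsup_N \tfrac 1N \log\|T^N v\| \leq \lambda'\bigr\}.
\]
The subadditivity $\|T^N(v+w)\|\leq 2\max(\|T^N v\|,\|T^N w\|)$ makes $E_\omega$ a linear subspace, and the identity $\|T^N(T_\omega v)\|=\|T^{N+1}v\|$ yields $T$-invariance; ergodicity then forces $\dim E_\omega$ to be almost surely constant. Properness ($E_\omega\subsetneq V_\omega$) is routine: extract, from the unit vectors $v_{N_k}$ realizing $\|T^{N_k}\|_{op}$ along a subsequence achieving $\Lambda^{\max}$, a limit $v_\infty$ by finite-dimensional compactness; then $\|T^{N_k} v_\infty\|\geq\tfrac 12\|T^{N_k}\|_{op}$ eventually, so $v_\infty\notin E_\omega$.

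Nontriviality of $E_\omega$ is the main obstacle. The symmetric attempt of extracting a subsequential limit from unit minimizers of $\|T^N u\|$ does not work directly, because $T^N$ has exponentially large Lipschitz constant on the unit sphere, so a small perturbation of a slow direction may be boosted into the fast subspace. The intended remedy is to track the singular value decomposition $T^N_\omega=U_N\Sigma_N V_N^*$ and let $F_N\subset V_\omega$ be the span of the right singular vectors whose singular values lie below $e^{N\lambda'}$. Using the strict gap $\Lambda^{\max}-\Lambda^{\min}>0$ together with the a.e.\ estimate $\log\|T_{T^N\omega}\|=o(N)$ (a consequence of Birkhoff applied to $\log^+\|T_\omega\|$), one aims for a summable bound on the Grassmannian angle $\angle(F_N,F_{N+1})$. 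Summability implies that $(F_N)$ is Cauchy and converges to a subbundle $F_\infty\subseteq E_\omega$ of constant positive dimension, whose $T$-invariance follows by passing to the limit in the approximate relation $T_\omega F_N(\omega)\approx F_{N-1}(T\omega)$. Establishing this summability bound via singular value perturbation estimates is the heart of the argument and is expected to be the hardest step.
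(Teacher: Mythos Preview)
Your reduction to the case $\Lambda^{\min}<\Lambda^{\max}$ and the properness of $E_\omega$ are fine, but the proposal has a genuine gap at the nontriviality step, and the fix you sketch is essentially as hard as the full Oseledets theorem.  Concretely: your spaces $F_N$ are defined by thresholding singular values at $e^{N\lambda'}$, so $\dim F_N$ is not a priori constant in $N$; to stabilize the dimension you would need to know that each $\tfrac1N\log\sigma_i(T^N_\omega)$ converges and to choose $\lambda'$ inside a gap of the limiting spectrum.  That is exactly Kingman's subadditive theorem applied to exterior powers, which this section explicitly sets out to avoid.  Even granting a stable dimension, the summable angle bound $\sum_N\angle(F_N,F_{N+1})<\infty$ is the Raghunathan--Ruelle perturbation estimate, so you would be reproving Oseledets inside what is meant to be a preliminary lemma.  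Finally, even if $F_\infty$ exists you must still check $F_\infty\subset E_\omega$; convergence of the $F_N$ in the Grassmannian does not by itself bound $\limsup_N\tfrac1N\log\|T^N v\|$ for $v\in F_\infty$, since $v$ is close to $F_N$ only up to an error that $T^N$ can magnify exponentially.

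The paper sidesteps all of this by working on the projective bundle $\mathbb{P}(V)\to\Omega$.  One takes a $T$-invariant ergodic probability measure $\eta$ on $\mathbb{P}(V)$ projecting to $\mu$ that \emph{minimizes} $\int f\,d\eta$ for $f([v])=\log(\|Tv\|/\|v\|)$, and applies Birkhoff to $(\mathbb{P}(V),\eta,f)$.  This produces, for $\mu$-a.e.\ $\omega$, a nonempty set of lines in $V_\omega$ growing at the minimal rate $\lambda$; their span $E_\omega$ is automatically nonzero and $T$-invariant.  If $E\subsetneq V$ we are in case~(ii); if $E=V$ then every vector is a finite combination of rate-$\lambda$ vectors, hence grows at rate at most $\lambda$, and a short weak-$*$ compactness argument (build an invariant measure from a hypothetical slow orbit and contradict minimality of $\lambda$) upgrades this to the uniform limit in case~(i).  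The nontriviality that blocks your approach is thus obtained for free, and no subadditive ergodic theorem is needed.
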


I am grateful to the referee for pointing out that ideas similar to the ones used in the proof of the above lemma also appear in the article of Walters \cite{Walters_proof}.

\begin{remark}
  Suppose $V$ is a cocycle over $\Omega$ such that \autoref{eqn:cond_L1_bdd_cocycle} holds.
  Suppose further that $E\subset V$ is an a.e. defined $T$-invariant subbundle.
  Then the same boundedness condition \eqref{eqn:cond_L1_bdd_cocycle} holds for the bundles $E$ and $V/E$, equipped with the natural norms.
\end{remark}

In typical situations, one expects generic vectors to grow at maximal possible speed.
The lemma below shows that if maximal growth occurs in a proper subbundle, then the corresponding piece must split off as a direct summand.
\begin{lemma}[Unusual growth implies splitting]
  \label{lemma:splitting}
  Consider a short exact sequence of cocycles over $\Omega$
  \[
  0 \to E \into V \xrightarrow{p} F \to 0
  \]
  Assume there exist $\lambda_E,\lambda_F\in \bR$ such that for a.e. $\omega\in \Omega$
  \begin{align}
  \begin{split}
    \forall e\in (E_\omega\setminus 0) \textrm{ we have } & \frac 1N \log \norm{T^N e} \to \lambda_E\\
    \forall f\in (F_\omega\setminus 0) \textrm{ we have } & \frac 1N \log \norm{T^N f} \to \lambda_F
  \end{split}
  \end{align}
  and the limits are uniform over $ \norm{e}=1, \norm{f}=1 $ and fixed $ \omega $.
  
  If $\lambda_E>\lambda_F$ then the sequence is split, i.e. there exists a linear map
  \begin{align}
  \sigma:F\to V \textrm{ such that }V= E\oplus \sigma(F) \textrm{ and } p\circ \sigma  = \id_F
  \end{align}
  and this decomposition of $ V $ is $T$-invariant.
  The exponent $ \lambda_F $ is allowed to be $ -\infty $.
\end{lemma}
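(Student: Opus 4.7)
The plan is to construct the invariant splitting as the limit of a sequence of measurable ones obtained by conjugating an initial splitting with the cocycle dynamics. Equip $F=V/E$ with the quotient metric, and define an initial splitting $\sigma_0\colon F\to V$ by sending $f\in F_\omega$ to its unique lift lying in $E_\omega^\perp\subset V_\omega$. Since orthogonal representatives realize the quotient norm, $\sigma_0$ is everywhere an isometric section and in particular $\norm{\sigma_0(\omega)}_{op}=1$ for every $\omega$.

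Next, I would define the iterates
\[
\sigma_n(\omega):=T_\omega^{-n}\circ \sigma_0(T^n\omega)\circ T_\omega^{n},\qquad n\geq 0,
\]
where the rightmost $T_\omega^n$ is the induced cocycle on $F$ (well-defined because $E$ is $T$-invariant). Each $\sigma_n$ still satisfies $p\circ\sigma_n=\id_F$, and a short telescoping computation gives
\[
\sigma_n(\omega)-\sigma_{n-1}(\omega)=T_\omega^{-n}\circ \delta_n\circ T_\omega^{n-1},\qquad \delta_n:=\sigma_0(T^n\omega)\,T_{T^{n-1}\omega}-T_{T^{n-1}\omega}\,\sigma_0(T^{n-1}\omega),
\]
where $\delta_n$ is automatically a map $F_{T^{n-1}\omega}\to E_{T^n\omega}$ (apply $p$ to see its image lands in $E$), with $\norm{\delta_n}_{op}\leq 2\norm{T_{T^{n-1}\omega}}_{op}$.

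The heart of the argument is bounding $\norm{\sigma_n-\sigma_{n-1}}_{op}$ by combining three uniform estimates, valid at $\mu$-a.e.\ $\omega$ and all sufficiently large $n$: (a) $\norm{T^{n-1}f}\leq e^{(n-1)(\lambda_F+\epsilon)}$ uniformly in $f\in F_\omega$ with $\norm{f}=1$, from the uniformity hypothesis on $F$; (b) $\norm{T_\omega^{-n}\vert_{E_{T^n\omega}}}_{op}\leq e^{-n(\lambda_E-\epsilon)}$, obtained by inverting the uniform expansion on $E$; and (c) $\norm{T_{T^{n-1}\omega}}_{op}=e^{o(n)}$, which follows from \eqref{eqn:cond_L1_bdd_cocycle} via Borel--Cantelli applied to $\log^+\norm{T_\cdot}_{op}$. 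Multiplying these together yields
\[
\norm{\sigma_n-\sigma_{n-1}}_{op}\leq C\, e^{-n(\lambda_E-\lambda_F-3\epsilon)}.
\]
Since $\lambda_E>\lambda_F$, choosing $\epsilon<(\lambda_E-\lambda_F)/3$ makes this summable, so $\sigma:=\lim_n \sigma_n$ exists as a measurable section of $V\to F$ with $p\circ\sigma=\id_F$. The case $\lambda_F=-\infty$ only strengthens the decay in (a).

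For $T$-invariance, the definition of $\sigma_n$ satisfies the recursion $\sigma_{n+1}(\omega)=T_\omega^{-1}\circ \sigma_n(T\omega)\circ T_\omega$, using the cocycle identity. Passing to the limit gives $T_\omega\circ \sigma(\omega)=\sigma(T\omega)\circ T_\omega$, which is exactly the assertion that the image subbundle $\sigma(F)\subset V$ is $T$-invariant and produces the desired decomposition $V=E\oplus\sigma(F)$. The main obstacle I anticipate is assembling (a), (b), (c) on a single full-measure set and verifying that it is the \emph{uniformity} of convergence in the hypothesis (rather than just pointwise a.e. Lyapunov behavior) that legitimately converts the growth statements for individual vectors into operator-norm estimates on the sections $\sigma_n-\sigma_{n-1}$.
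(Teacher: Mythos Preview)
Your proposal is correct and is essentially the paper's argument: your telescoping difference $\sigma_n-\sigma_{n-1}=-(T_{E,\omega}^{\,n})^{-1}\circ U_{T^{n-1}\omega}\circ T_{F,\omega}^{\,n-1}$ is exactly the $(n{-}1)$st term of the formal series the paper writes for $\tau_\omega=\sigma-\sigma_0$, and your estimates (a)--(c) match the paper's convergence check (the paper invokes Birkhoff rather than Borel--Cantelli for (c), but either works). The only difference is packaging: you show $\{\sigma_n\}$ is Cauchy and read off invariance from the recursion $\sigma_{n+1}(\omega)=T_\omega^{-1}\sigma_n(T\omega)T_\omega$, whereas the paper writes $T$ in block upper-triangular form, derives the functional equation for $\tau$, and solves it by the explicit series.
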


I am grateful to the referee for pointing out that a similar idea appears in Ma{\~n}e's treatment of the Oseledets theorem \cite[Lemma 11.6]{Mane}.

\begin{remark}
\leavevmode
  \begin{enumerate}
   \item The splitting constructed in \autoref{lemma:splitting} will be \emph{tempered}, i.e.
   \[
   \lim_{N\to\infty} \frac 1N \log \frac{ \norm{\sigma(T^Nv)} }{\norm{T^Nv}} = 0
   \]
   The Lyapunov exponent of $F$ and $\sigma(F)$ will thus be the same.
   \item In general, one expects that the maximal Lyapunov exponent on a subbundle should be less than the one on the entire bundle.
   What \autoref{lemma:splitting} says is that if this is not the case, then the cocycle must be a direct sum.
  \end{enumerate}
\end{remark}

\begin{proof}[Proof of \autoref{thm:Oseledets}]
	Proceed by induction on the dimension of the cocycle $ V $.
	The base case of line bundles follows from the Birkhoff ergodic theorem, as explained in \autoref{rmk:line_bdl}.
	Suppose now that the statement holds for all cocycles of dimension at most $ n-1 $ and that $ V $ is $ n $-dimensional.
	\autoref{lemma:dichotomy} implies that either the statement holds on $ V $ with a single exponent $ \lambda_1 $, or there exists a proper $ T $-invariant subbundle $ E\subset V $.
	Among all such possible $ E $, pick one that has minimal codimension.
	Then applying \autoref{lemma:dichotomy} again to $ V/E $ it follows that all vectors in $ V/E $ grow at the same rate $ \lambda' $.
	
	Applying the inductive assumption to $ E $ gives exponents $ \lambda_1> \cdots >\lambda_k $ and subbundles $ E^{\leq \lambda_i} $ with the required properties.
	Note that $ E^{\leq \lambda_i} $ are also naturally subbundles in $ V $.
	If $ \lambda_1\leq \lambda' $ then we are done.
	Otherwise, apply \autoref{lemma:splitting} to the sequence
	\begin{align*}
	0\to E/E^{\leq \lambda_2} \into V/E^{\leq \lambda_2} \onto V/E \to 0
	\end{align*}
	to obtain a splitting map $ \sigma:V/E \to V/E^{\leq \lambda_2} $.
	Then the preimage of $ \sigma(V/E) $ under the projection $ V \to V/E^{\leq \lambda_2} $ gives us a subbundle denoted $ V^{\leq \lambda_2} $ with the following two properties.
	First, any vector outside $ V^{\leq \lambda_2} $ will grow at rate $ \lambda_1 $.
	Second, $ V^{\leq \lambda_2} $ maps naturally to $ V/E $ (which has exponent $ \lambda' $).
	
	Apply iteratively \autoref{lemma:splitting} to $ V^{\leq \lambda_i} $ (starting now with $ V^{\leq \lambda_2 }$) to construct the Oseledets filtration as above, repeating the process until $ \lambda_i \leq \lambda' $.
\end{proof}

\subsection{Proof of \autoref{lemma:dichotomy}}

Consider the bundle $V\to \Omega$ and the associated projective space bundle $\bP(V)\xrightarrow{\pi} \Omega$.
Since the transformation $T$ acts on $V$ by linear maps, its action extends to $\bP(V)$ by projective-linear transformations.

Define the space of probability measures on $\bP(V)$ which project to the measure $\mu$ on $\Omega$:
\begin{align}
\label{eqn:prob_msr_proj_bdl}
\cM^1(\bP(V),\mu) := \{\eta \textrm{ prob. measure on }\bP(V) \textrm{ with }\pi_*\eta=\mu \}
\end{align}
Since $\mu$ is $T$-invariant, the action of $T$ on $\bP(V)$ naturally extends to an action on $\cM^1(\bP(V),\mu)$.
A weak-* topology on $ \cM^1(\bP(V,\mu)) $ is described in \cite[\S~4.2.3]{Viana} using duality with measurable functions on the total space, which are continuous on $ \mu $-a.e. fiber.
A related discussion can be found in \cite[\S~4]{Bochi_notes}.

\begin{exercise}\leavevmode
\begin{enumerate}
 \item \textbf{Krylov--Bogoliubov.} Let $S:\Omega\to \Omega$ be a homeomorphism of a compact separable metric space.
  Prove that the space of probability measures on $\Omega$ is weak-* and sequentially compact, and that it has an $S$-invariant measure.
  \item \textbf{Krylov--Bogoliubov in families.}
  Prove that the space of probability measures $\cM^1(\bP(V),\mu)$ (see \eqref{eqn:prob_msr_proj_bdl}) is weak-* and sequentially compact, and that it has at least one $T$-invariant measure.
\end{enumerate}
\end{exercise}

Define now the function $f:\bP(V)\to \bR$ by
\begin{align*}
 f([v]) := \log\left( \frac{\norm{Tv}}{\norm{v}} \right)
\end{align*}
Integrating against $f(-)$ gives a continuous function on the space of measures:
\begin{align}
\begin{split}
 \int f :\cM^1(\bP(V),\mu) &\to \bR\\
 \eta &\mapsto \int_{\bP(V)}f {\rm d}\eta
\end{split}
\end{align}
The set of $T$-invariant measures, denoted $\cM^1(\bP(V),\mu)^T$ is non-empty and weak-* compact, so this function achieves a minimum on it.
Moreover, the set of measures on which the minimum is achieved is a closed convex subset.
Therefore, an extremal point of this convex set exists.
This is a measure $\eta\in \cM^1(\bP(V),\mu)$ which is also ergodic for the $T$-action on $\bP(V)$; ergodicity follows by the choice of $ \eta $ as an extreme point in the minimizing convex set.

Applying the Birkhoff ergodic theorem to the function $f$ and the measure $\eta$, it follows that for $\eta$-a.e. $[v]\in \bP(V)$
\begin{align}
\label{eqn:orbit_sum}
 \frac 1N \left( f([v]) + \cdots + f(T^{N-1}[v]) \right) = \frac 1 N \log \left(\frac{\norm{T^Nv}}{\norm{v}}\right) \to \int_{\bP(V)} fd\eta =:\lambda
\end{align}
In each fiber $\bP(V_\omega)\subset \bP(V)$ consider the set $M$ of vectors for which the above limit holds.
This is a $T$-invariant set, and for $\mu$-a.e. $\omega$ this set is non-empty.
Moreover, the fiberwise span of the $[v]\in M$ gives a $T$-invariant subbundle $E\subset V$.

If this is a proper subbundle, then we are in case (ii) of \autoref{lemma:dichotomy}.
Suppose therefore that $E=V$.

In this case, any vector in $V$ can be written as a linear combination of vectors with asymptotic norm growth rate $\lambda$.
Thus, their asymptotic growth rate is at most $\lambda$.
In fact, by choice of $ \lambda $ their asymptotic growth rate has to be exactly $\lambda$, uniformly in $ [v] $ in a fixed fiber.

To see this, suppose by contradiction that there exists a $ \mu $-generic $ \omega $ (i.e. the forward orbit of $ \omega $ weak-* approaches $ \mu $), an $ \epsilon>0 $ and a sequence $ [v_i] \in \bP(V_\omega) $ such that
\begin{align}
\label{eqn:slow_growth}
\limsup_{N_i\to \infty} \frac 1 {N_i} \log (\norm{T^{N_i} v_i}) \leq \lambda -\epsilon
\end{align}
where $ \norm{v_i}=1 $ and $ N_i $ is some sequence tending to $ +\infty $.

Consider now probability measures on $ \bP(V) $ given by normalized $ \delta $-masses on the orbits of length $ N_i $ of $ [v_i] $, call them $ \eta_i $.
Let $ \eta_\epsilon $ be one of their weak-* limits: it is $ T $-invariant by construction and projects to $ \mu $ since $ \omega $ is $ \mu $-generic.

However, from \autoref{eqn:slow_growth} we see that $ \limsup \int_{\bP(V)}f  {\rm d}\eta_i \leq \lambda -\epsilon $ (see \autoref{eqn:orbit_sum} for the relation between orbit sum and function evaluation).
It follows that also $ \int_{\bP(V)} f {\rm d}\eta_\epsilon \leq \lambda -\epsilon $ which contradicts the definition of $ \lambda $.
\hfill \qed

\begin{remark}
The proof of \autoref{lemma:dichotomy} shows that the filtration constructed will, in fact, give the correct sequence of Lyapunov exponents.
However, \autoref{lemma:splitting} might be of intrinsic interest.
\end{remark}

\subsection{Proof of \autoref{lemma:splitting}}
The setup is a short exact sequence of vector bundles
\begin{align*}
 0 \to E \to V \xrightarrow{p} F \to 0
\end{align*}
Pick any lift $\sigma_0: F \to V$ such that $V=E\oplus \sigma_0(F)$.
For instance, since $V$ has a metric, there is a natural identification $F\xrightarrow{\sim} E^\perp\subset V$.

The cocycle map $T$ now takes the form
\begin{align*}
 T = \begin{bmatrix}
      T_E & U \\
      0 & T_F
     \end{bmatrix}
\end{align*}
with the following linear maps
\begin{align}
\begin{split}
 T_{E,\omega}&: E_\omega \to E_{T\omega}\\
 T_{F,\omega}&: F_\omega \to F_{T\omega}\\
 U_\omega &: F_\omega \to E_{T\omega}
 \end{split}
\end{align}
Any other possible lift $\sigma:F \to V$ differs from $\sigma_0$ by a map $\tau : F\to E$.
Indeed, the difference $\tau=\sigma-\sigma_0$ is a map $F\to V$ which after composition back with the projection $p:V\to F$ is the zero map.
Therefore $\tau$ has image in $E$, and can be regarded as a map $\tau :F\to E$.

The condition that $\sigma = \sigma_0 + \tau$ is a splitting that diagonalizes $T$ is explicit.
Work in the decomposition $V = E \oplus \sigma_0(F)$.
Then a vector $\begin{bmatrix} e\\ f\end{bmatrix}$ is in $\sigma(F)$ if and only if $e=\tau(f)$.
But applying $T$ gives
\begin{align}
 T\begin{bmatrix}
   \tau(f)\\
   f
  \end{bmatrix}
  =
  \begin{bmatrix}
    T_{E,\omega} \circ \tau_\omega (f) + U_\omega(f)\\
    T_{F,\omega}(f)
  \end{bmatrix}
   \in V_{T\omega}
\end{align}
The condition that this vector is in $\sigma_{T\omega}(F)$ becomes
\begin{align*}
 T_{E,\omega} \circ \tau_{\omega}(f) + U_\omega(f) = \tau_{T\omega}\circ T_{F,\omega}(f).
\end{align*}
Note that this is an equality of maps from $F_\omega$ to $E_{T\omega}$.
The equation can be equivalently rewritten
\begin{align}
 \tau_\omega = T^{-1}_{E,\omega}\circ \tau_{T\omega} \circ T_{F,\omega} - T^{-1}_{E,\omega} \circ U_\omega
\end{align}
A formal solution of this equation is given by
\begin{align}
\label{eqn:formal_sln}
 \tau_\omega = -\sum_{n=0}^\infty (T^{n+1}_{E,\omega})^{-1} \circ U_{T^n \omega} \circ T^{n}_{F,\omega}
\end{align}
which uses the linear operators corresponding to iterating the maps $n$ times:
\begin{align}
\begin{split}
 T^{n}_{F,\omega} 	&: F_\omega\to F_{T^n \omega}		\\
 U_{T^n\omega} 		&: F_{T^n\omega} \to E_{T^{n+1}\omega}	\\
 (T^{n+1}_{E,\omega})^{-1} 	&: E_{T^{n+1}\omega} \to E_\omega 
 \end{split}
\end{align}
However, recall the assumption $\lambda_E>\lambda_F$, or equivalently $\lambda_F-\lambda_E<0$.
Each vector in $E$ and $F$ respectively grows exponentially at rate $\lambda_E$ and $\lambda_F$ respectively.

Therefore, the formal sum in \autoref{eqn:formal_sln} for a.e. $ \omega $ converges uniformly and gives the desired linear map.
For this, first note that the Birkhoff theorem gives for $\mu$-a.e. $\omega$
\begin{align}
\label{eqn:L1_basic_bound}
\norm{U_{T^n\omega} }_{op } = e^{o(n)}
\end{align}
since $ \log^+\norm{U}_{op} $ is in $ L^1 $.
Hence for a vector $ v\in F_\omega $ the expression $ \tau_\omega v $ is an infinite sum with each term bounded by an exponentially convergent series.
For this, use that
\begin{align*}
 \forall v_1\in E_{\omega}  \textrm{ we have } & \norm  {T^{n+1}_{E,\omega} v_1} =  e^{n \lambda_E + o(n)}\cdot \norm{v_1} 
 \intertext{and hence}
 \forall v_2\in E_{T^{n+1}\omega}  \textrm{ we have } & \norm { ({T^{n+1}_{E,\omega})}^{-1} v_2} =  e^{-n \lambda_E + o(n)}\cdot \norm{v_2}
\end{align*}
Hence for a.e. $ \omega $ and for a basis of $ F_\omega $ applying $ \tau_\omega $ is well-defined and converges uniformly.
\hfill \qed

\section{A geometric interpretation of the Oseledets theorem}
\label{sec:geometric_Oseledets}

The point of view on the Oseledets theorem developed in this section goes back at least to Kaimanovich \cite{Kaimanovich}.
The work of Karlsson and Margulis \cite{Karlsson_Margulis} provides a link between the point of view in this and the next sections.

\subsection{Basic constructions}

\subsubsection{Standard operations of linear algebra}
Starting with a collection of vector bundles, standard linear algebra operations produce new ones.
This applies in particular to cocycles over dynamical systems.

Suppose that $L$ and $N$ are two cocycles over the map $T:(\Omega,\mu) \to (\Omega,\mu)$, with Lyapunov exponents $\lambda_1\geq \cdots \lambda_l$ and $\eta_1\geq \cdots \geq \eta_n$.
The Lyapunov exponents are listed with multiplicities, and this will be the convention from now on.

Here is a list of constructions and corresponding Lyapunov exponents:
\begin{custom_description}{$\Hom(L,N)$}
 \item[$L\otimes N$] Exponents are $\{\lambda_i+\eta_j\}$ with $i=1\ldots l, j=1\ldots n$.
 \item[$L^\vee$ (dual)] Exponents are $\{-\lambda_i\}$ with $i=1\ldots l$.
 \item[$\Hom(L,N)$] Exponents are $\{\eta_j-\lambda_i\}$ with $i=1\ldots l, j=1\ldots n$.
 \item[$\Lambda^k(L)$] Exponents are $\{\lambda_{i_1}+\cdots + \lambda_{i_k}\}$ with $i_1<\cdots< i_k$.
\end{custom_description}
The Oseledets filtrations of the new cocycles can also be explicitly described in terms of those for the cocycles $L$ and $N$.

\subsubsection{Volume-preserving diffeomorphisms.}
Suppose that $F:M\to M$ is a diffeomorphism of a manifold which preserves a measure given by a volume form $\mu$.
Thus $\mu$ is a section of the top exterior power of the tangent bundle of $M$, denoted $\Lambda^n(TM)$.
Because $\mu$ is preserved by $T$ the Lyapunov exponent of $\Lambda^n(TM)$ is zero.
On the other hand, by the preceding discussion the exponent of $\Lambda^n(TM)$ is also the sum of the Lyapunov exponents on the tangent bundle $TM$.

\subsubsection{Symplectic cocycles.}
Suppose the rank $2g$ vector bundle $V\to \Omega$ carries a symplectic pairing denoted $\ip{-,-}$ which is preserved by the linear maps $T_\omega$.
Then the Lyapunov exponents of $V$ have the symmetry
\begin{align*}
 \lambda_1 \geq \cdots \geq \lambda_g \geq -\lambda_g \geq \cdots \geq -\lambda_1
\end{align*}
In fact, if $V^{\leq \lambda_i}$ denotes the forward Oseledets filtration, then the symplectic orthogonal of $V^{\leq \lambda_{i+1}}$ is $V^{\leq -\lambda_i}$.

For the symmetry of the Lyapunov spectrum, it suffices to note that the symplectic form gives an isomorphism of the cocycle $V$ and its dual cocycle $V^\vee$.
The exponents of $V^\vee$ are the negatives of that for $V$, so the claim follows.

The same construction gives the claim about filtrations.
The isomorphism given by the symplectic form respects the Oseledets filtrations on $V$ and $V^\vee$.
On the other hand, the Oseledets filtration on $V^\vee$ can be described in the general case as the dual, via annihilators, of the Oseledets filtration on $V$.

\begin{remark}
\leavevmode
\begin{enumerate}
 \item Consider a matrix in the symplectic group $A\in \Sp_{2g}(\bR)$.
 Suppose that $e^\lambda$ is an eigenvalue of $A$.
 Then $e^{-\lambda}$ is also an eigenvalue of $A$, and this can be seen by considering the symmetry of the characteristic polynomial of $A$.
 This is a different explanation for the symmetry of the Lyapunov spectrum of symplectic cocycles.

 \item There are some other symmetries the cocycle might have.
 If it preserves a symmetric bilinear form of signature $(p,q)$ with $p\geq q$, then the spectrum has the form
 \begin{align*}
  \lambda_1\geq \cdots \geq \lambda_q \geq 0 \cdots \geq 0 \geq -\lambda_q \geq \cdots \geq -\lambda_1
 \end{align*}
 In particular, there are at least $p-q$ zero exponents.
 
 \item The Oseledets theorem holds for both real and complex vector bundles (with hermitian metrics).
 The Lyapunov exponents are real numbers in both cases.
\end{enumerate}
\end{remark}

\subsection{Structure theory of Lie groups}

\subsubsection{Setup.}
Throughout this section $G$ will be a real semisimple Lie group with Lie algebra $\frakg$.
Fix a maximal compact subgroup $K\subset G$ with Lie algebra $\frakk$.
Then $G$ can be equipped with a \emph{Cartan involution} $\sigma:G\to G$ whose action on $\frakg$ gives the decomposition into eigenspaces
\begin{align*}
 \frakg = \frakk\oplus \frakp \textrm{ where }\sigma\vert_{\frakk}=+1 \textrm{ and }\sigma\vert_{\frakp}=-1
\end{align*}
Recall also the \emph{Killing form} on $\frakg$ given by $\ip{x,y}=\tr (\ad_x\circ \ad_y)$.
It is negative-definite on $\frakk$ and positive-definite on $\frakp$.

\subsubsection{Example of $\SL_n\bR$.}
Consider
\begin{align*}
 \frakg = \fraksl_n\bR =\{ a\in \Mat_{n\times n}\bR \vert \tr a =0 \}.
\end{align*}
A maximal compact subgroup is $K=\SO_n\bR$ with Lie algebra $\fraks\frako_n\bR$.
The involution $\sigma$ acts on $\SL_n\bR$ by $g\mapsto (g^{-1})^t$ and on $\frakg$ by $x\mapsto -x^t$.
The subspace $\frakp$ on which $\sigma$ acts by $-1$ is the space of symmetric matrices:
\begin{align*}
 \frakp = \{x\in \fraksl_n\bR \vert x=x^t\}
\end{align*}

\begin{remark}
 In fact, any real semisimple Lie algebra $\frakg$ admits an embedding $\frakg\into \fraksl_n\bR$ such that the Cartan involution on $\frakg$ is that of $\fraksl_n\bR$ restricted to $\frakg$.
 In this case $\frakk=\frakg \cap \fraks\frako_n\bR$ and similarly for $\frakp$.
\end{remark}

\subsubsection{Split maximal Cartan algebra and Polar Decomposition.}
Pick a maximal abelian subalgebra $\fraka\subset \frakg$.
By definition, any two elements of $\fraka$ commute, and $\fraka$ is maximal with this property.
This is called a \emph{split Cartan subalgebra}.
It carries an action of a reflection group whose description is omitted.
The reflection hyperplanes divide $\fraka$ into chambers; fix one such $\fraka^+\subset \fraka$, called a Weyl chamber.
Using the exponential map, $\fraka$ gives the Lie subgroup $A\subset G$; the chamber $\fraka^+$ determines a semigroup $A^+\subset A$.

The polar (or Iwasawa, or $KAK$) decomposition of an element $g\in G$ describes it as a product
\begin{align*}
 g=k_1 a k_2 \textrm{ where }k_i \in K \textrm{ and }a\in A^+.
\end{align*}
In other words $G=KA^+K$.
The decomposition of an element is typically, though not always, unique.

\subsubsection{The example of $\SL_n\bR$.}
The spectral theorem says that a symmetric $n\times n$ matrix $M$ has a basis of orthogonal eigenvectors.
Thus $M$ is conjugate to a diagonal matrix via an orthogonal transformation:
\begin{align*}
 M = k\cdot d\cdot  k^t \textrm{ where }k\in \SO_n\bR \textrm{ and }d \textrm{ is diagonal.}
\end{align*}
Because permutation matrices are orthogonal, one can assume the entries of $d$ are in increasing order.

Let now $g\in \SL_n\bR$ and consider the elements $gg^t$ and $g^tg$.
Both of them are symmetric, so the spectral theorem gives
\begin{align*}
 gg^t &= k_1 d_1 k_1^t\\
 g^t g & = k_2 d_2 k_2^t
\end{align*}
One can check that $d_1=d_2=d$ and then that $g=k_1 \sqrt{d} k_2$.
One can take the entries of $\sqrt{d}$ to be positive, at the expense of adjusting the signs of $k_1$ and $k_2$.

A maximal split Cartan subalgebra for $\fraksl_n\bR$ is given by diagonal matrices
\begin{gather}
\begin{align*}
 \fraka &= \{a = \diag(\lambda_1,\ldots,\lambda_n) \vert \lambda_1+\ldots+\lambda_n = 0\}
\end{align*}
\intertext{A Weyl chamber is given by}
\begin{align*}
 \fraka^+ &= \{a=\diag(\lambda_1,\ldots,\lambda_n)\in \fraka \vert \lambda_1\geq \cdots \geq \lambda_n\}
\end{align*}
\end{gather}

\subsubsection{Singular values}
\label{sssec:sing_values}
More intrinsically, suppose that $T:V\to W$ is a linear map, and each of $V$ and $W$ is equipped with a metric.
The superscript $(-)^\dag$ denotes the adjoint of an operator.

Consider the symmetric self-adjoint linear operator $T^\dag T:V\to V$.
By the spectral theorem, it can be diagonalized with positive eigenvalues
\begin{align*}
\sigma_1(T)^2\geq \cdots \geq \sigma_n (T)^2
\end{align*}
These are called the \emph{singular values} of $T$.
The top singular value also computes the operator norm of $T$:
\begin{align}
\label{eqn:sigma_1_op_norm}
\sigma_1(T)=\norm{T}_{op} := \sup_{0\neq v\in V} \frac{\norm{Tv} }{\norm{v} }
\end{align}

Moreover, one has the induced operators on exterior powers $\Lambda^k T : \Lambda^k V\to \Lambda^k W$.
Then the singular values of $\Lambda^k T$ can be expressed in terms of those of $T$, for example the largest one is given by
\begin{align*}
\sigma_1 (\Lambda^k T) = \sigma_1(T)\cdots \sigma_k(T)
\end{align*}
In particular, the operator norm of $\Lambda^k T$ gives the product of the first $k$ singular values.

\subsection{Symmetric spaces}

\subsubsection{Setup.}
Consider the quotient $X:=G/K$, equipped with a left $G$-action.
At the distinguished basepoint $e=\id K$, the tangent space is canonically identified with $\frakp$.
Here are some properties of the quotient:
\begin{enumerate}
 \item[(i)] It admits a canonical $G$-invariant metric, coming from the restriction of the Killing form to $\frakp$.
 
 \item[(ii)] The space $X$ is diffeomorphic to $\frakp$.
 The exponential map $\exp:\frakp \to G \to G/K$ exhibits the diffeomorphism. 
 
 \item[(iii)] The canonical metric has non-positive sectional curvature.
 At the basepoint, in the direction of normalized vectors $x,y\in \frakp$, it involves the commutator of $x$ and $y$ and is given by
 \begin{align*}
  K(x,y) = -\frac 12 \norm{[x,y]}^2
 \end{align*}
 In particular, if $x$ and $y$ commute, the curvature is $0$.
 The abelian subalgebra $\fraka$ determines, via the exponential map, a totally geodesic embedding of $\fraka$ (with Euclidean metric) into $X$.
\end{enumerate}

\subsubsection{The example of $\SL_2\bR$.}
Recall that the hyperbolic plane can be described as a quotient $\bH=\SL_2\bR/ \SO_2\bR$.
The canonical symmetric space metric will in this case be the hyperbolic metric.

\begin{remark}
 The action of $G$ on $X$ is transitive, i.e. for any $x,y\in X$ there is a $g\in G$ such that $gx=y$.
 This action also preserves distances.
 
 In the hyperbolic plane $\bH$, for any $x_1,x_2$ and $y_1,y_2$ such that $d(x_1,x_2)=d(y_1,y_2)$, there is a transformation $g\in G$ such that $g x_i = y_i$.
 
 However, for a general $X$ the action does not act transitively on pairs of points at the same distance.
 The number of parameters that needs to be ``matched'' is equal to the dimension of $\fraka$.
 In the case of $\SL_2\bR$, this dimension is $1$ and the parameter corresponds to distance.
\end{remark}

\subsubsection{Geodesics in $X$}
\label{sssec:geodesic_desc}
Given a point $x\in X = G/K$, using the decomposition $G=KAK$ it follows that $x=k_x a_x e$, where $k_x \in K$ is to be thought of as a ``direction'' and $a_x$ as a ``distance''.
All geodesic rays in $X$ starting at the basepoint $e$ can be parametrized by
\begin{align*}
\gamma(t) = k \exp(t\cdot \alpha)e \textrm{ where }k\in K, \alpha \in \fraka^+
\end{align*}
The geodesic ray is unit speed if $\norm{\alpha}^2=1$.
Note that if $\alpha$ is on a wall of the Weyl chamber (for $\fraksl_n\bR$, this means some eigenvalues coincide) then different $k\in K$ can give the same geodesic.
This is also the way in which the $KAK$ decomposition can fail to be unique.

\subsubsection{Cartan projection}
For $x\in X$ let $r(x)\in \fraka^+$ denote the unique element such that $k\exp(r(x))e=x$ for some $k\in K$.
Although $k$ might not be unique, the element $r(x)$ is.
It can be viewed as a ``generalized radius'' (in the literature, also called a Cartan projection).
Its norm in $\fraka$ is equal to the Riemannian distance from $e$ to $x$.

For example, start with a matrix $g\in G=\GL_n\bR$, with maximal compact $K=\operatorname{O}_n(\bR)$ and split Cartan the diagonal matrices $\fraka\subset \gl_n\bR$.
Consider the point $g\in G/K$ obtained by applying $g$ to the basepoint $e\in G/K$.
Then the Cartan projection $r(g)\in \fraka^+$ is the diagonal matrix with $i$\textsuperscript{th} entry $\log (\sigma_i(g)$) (the singular values $ \sigma_i $ are defined in \autoref{sssec:sing_values}).

\subsubsection{Regularity}
 \label{sssec:reg_sequence}
 A sequence of points $\{x_n\}$ in $X$ is \emph{regular} if there exists a geodesic ray $\gamma:[0,\infty)\to X$ and $\theta\geq 0$ such that
 \begin{align*}
  \dist(x_n,\gamma(\theta\cdot n)) = o(n)
 \end{align*}
 In other words the quantity $\frac 1n \dist(x_n,\gamma(\theta\cdot n))$ tends to zero.
 
\begin{remark}
 \leavevmode
 \begin{enumerate}
  \item[(i)] If the parameter $\theta$ is zero, the sequence satisfies $d(x_n,e)=o(n)$.
  Recall that $e\in X$ is the distinguished basepoint.
  \item[(ii)] A sequence $g_n \in G$ is \emph{regular} if the sequence of points $\{g_n e\}$ is regular.
 \end{enumerate}
\end{remark}

\subsubsection{Kaimanovich's characterization of regularity}
The regularity of a sequence in a symmetric space $X$ can be characterized by rather simple conditions.
The non-positive curvature assumption is crucial for this description to hold, and in other metric spaces such characterizations are not known.

\begin{theorem}[Kaimanovich \cite{Kaimanovich}]
\label{thm:reg_pts}
 A sequence of points $\{x_n\}$ in $X=G/K$ is regular if and only if the following two conditions are satisfied:
 \begin{custom_description}{Distances converge}
  \item[Small steps] $\dist(x_n,x_{n+1})=o(n)$
  \item[Distances converge] $\alpha:=\lim \frac{r(x_n)}{n}$ exists in $\fraka^+$.
 \end{custom_description}
\end{theorem}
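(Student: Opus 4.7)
The proof naturally splits into two directions; the converse contains the main difficulty.

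\emph{Forward direction.} Assume $\{x_n\}$ is regular with unit-speed ray $\gamma(t) = k\exp(t\alpha_0)e$, where $\alpha_0 \in \fraka^+$ with $\norm{\alpha_0} = 1$, and parameter $\theta \geq 0$. The small-steps condition is immediate from the triangle inequality
\begin{align*}
\dist(x_n, x_{n+1}) \leq \dist(x_n, \gamma(\theta n)) + \theta + \dist(\gamma(\theta(n+1)), x_{n+1}) = o(n).
\end{align*}
For the convergence of Cartan projections, I would invoke a Lipschitz estimate $\norm{r(x) - r(y)}_{\fraka} \leq C\cdot\dist(x,y)$, a general incarnation of Lidskii-type singular-value inequalities (transparent in the $\SL_n\bR$ model via $r_i = \log\sigma_i$) that holds in general via the totally geodesic flat $\exp(\fraka)e \subset X$ and non-expansiveness of the Weyl folding $\fraka \to \fraka^+$. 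Applied to $y = \gamma(\theta n)$, for which $r(\gamma(\theta n)) = \theta n \alpha_0$, this yields $r(x_n)/n \to \theta \alpha_0 =: \alpha \in \fraka^+$.

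\emph{Converse direction.} Suppose now both conditions hold. If $\alpha := \lim r(x_n)/n$ equals $0$, then $\dist(e, x_n) = \norm{r(x_n)}_{\fraka} = o(n)$ and regularity is verified trivially with $\theta = 0$ and any ray. Otherwise, set $\theta = \norm{\alpha}_{\fraka} > 0$ and $\alpha_0 = \alpha/\theta$. Using the polar decomposition, write $x_n = k_n \exp(r(x_n)) e$ with $k_n \in K$. What needs to be produced is an element $k^* \in K$ such that $\gamma(t) := k^* \exp(t \alpha_0) e$ satisfies $\dist(x_n, \gamma(\theta n)) = o(n)$; once $\alpha_0$ is fixed, this reduces to showing that $k_n$ converges in the flag manifold $K/M_{\alpha_0}$, where $M_{\alpha_0} \subset K$ is the stabilizer of the ray $\exp(t\alpha_0)e$ at infinity (cosets of $M_{\alpha_0}$ parametrizing rays with a given $\fraka^+$-direction).

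\emph{Main obstacle.} The crux is establishing this flag-manifold convergence of $k_n$. Via CAT(0) comparison at the basepoint $e$, the small-steps hypothesis yields consecutive angle estimates
\begin{align*}
\angle_e(x_n, x_{n+1}) = O\!\left(\dist(x_n, x_{n+1})/\dist(e, x_n)\right) = o(1)/\theta \to 0,
\end{align*}
so for each fixed $R > 0$ the projections of $x_n$ onto the sphere of radius $R$ around $e$ form a sequence with vanishing consecutive differences on a compact set. Combined with the strong constraint that $r(x_n)/n$ converges as a \emph{vector} in $\fraka^+$ (pinning down the radial part, not merely its length), this should force $k_n \to k^*$ modulo $M_{\alpha_0}$. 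The technical heart is implementing this Tits-geometric step — promoting vanishing consecutive differences, together with a fixed limiting $\fraka$-direction $\alpha_0$, to actual convergence in the flag manifold — which is the main obstacle of the proof; the fixed $\alpha_0$ kills degeneracies in non-compact directions so that only a compact rotational ambiguity survives, and small steps then pin it down.
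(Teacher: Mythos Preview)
Your forward direction is fine, and you correctly isolate the heart of the converse: producing a limiting direction $k^*$ (equivalently, a point in the flag manifold $K/M_{\alpha_0}$). But the mechanism you propose for this step is too weak, and it is not the one the paper uses.

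Your CAT(0) comparison gives only $\angle_e(x_n,x_{n+1}) = O\!\big(\dist(x_n,x_{n+1})/\dist(e,x_n)\big) = o(1)$. Vanishing consecutive angles is \emph{not} enough to force convergence of the direction --- this is precisely the logarithmic-spiral obstruction flagged in the remark following the theorem (a sequence in $\bR^2$ with bounded steps and $|x_n|\sim n$, yet rotating by $\log n$). Your hope that the vector-valued convergence of $r(x_n)/n$ ``pins down'' the rotational part and closes this gap is not substantiated; that hypothesis controls the radial (flat) component, not the $K$-direction transverse to the flat, so by itself it cannot upgrade $o(1)$ increments to a convergent sum.

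The paper's argument exploits strict negative curvature rather than mere CAT(0) comparison. In the rank-one model $\bH^2$ the hyperbolic law of sines gives
\[
|\phi_n| \;\lesssim\; \frac{\sinh\big(\dist(x_n,x_{n+1})\big)}{\sinh\big(\dist(e,x_{n+1})\big)} \;\leq\; e^{-\theta n + o(n)},
\]
so the angles are \emph{exponentially} small, hence summable, and the total angle $\phi=\sum\phi_n$ converges. That exponential gain is the missing idea in your outline. In higher rank the same mechanism handles the directions transverse to the flats, while the convergence of $r(x_n)/n$ as a vector in $\fraka^+$ is what takes care of the residual flat directions where curvature vanishes --- the two ingredients play complementary roles, rather than the latter rescuing a weak version of the former.
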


A precursor to this result is due to Ruelle \cite[Prop. 1.3]{Ruelle_ergodic}, where it is phrased in terms of matrices.

\begin{remark}
 \leavevmode
 \begin{enumerate}
  \item [(i)] Recall that there exists a sequence of points $\{x_n\}\subset \bR^2$ with $\dist(x_n,x_{n+1})=O(n)$, $\lim \frac {|x_n|}n$ exists, but the angles of $x_n$ to the origin don't converge.
  One can take the points to be on a logarithmic spiral.
  This example is in the ``zero curvature'' case.
  
  Totally geodesic embeddings of $\bR^d$ into $G/K$ arise using the exponential map in $\fraka$, and for this reason the convergence of the vector-valued distance in $\fraka^+$ is necessary in \autoref{thm:reg_pts}.
  
  \item [(ii)] It is instructive to verify \autoref{thm:reg_pts} in the case when $X$ is a tree, and $r(x)$ is simply the distance to a fixed basepoint.
 \end{enumerate}
\end{remark}

\noindent \textit{Proof of \autoref{thm:reg_pts}.}
 For simplicity, consider the case of the hyperbolic plane $\bH=\SL_2\bR/\SO_2\bR$, equipped with the metric of constant negative curvature.
 This case contains the main idea and is easier notationally.

 First, recall the Law of Sines in constant negative curvature.
 Consider a triangle with angles of sizes $\alpha,\beta,\gamma$ and opposite sides of lengths $a,b,c$.
 These quantities are related by
 \begin{equation}
  \label{eqn:sine_law}
  \frac{\sin \alpha}{\sinh a} = \frac{\sin \beta}{\sinh b} = \frac{\sin \gamma}{\sinh c}
 \end{equation}
 Therefore $\sin \alpha = \sin \beta \cdot \frac{\sinh a}{\sinh b}$.
 Recall that we have the estimates
 \begin{align*}
 \begin{split}
  \dist(x_n,x_{n+1}) &= o(n)\\
  \dist(e,x_n) &= \theta n + o(n)
 \end{split}
 \end{align*}
 Assume that $\theta>0$, otherwise the claim follows directly.

% \begin{wrapfigure}{R}{0.6\textwidth}
%  \begin{center}
%  \includegraphics[width=0.5\textwidth]{./pictures/regular_pts_hyperbolic_plane}
%  % 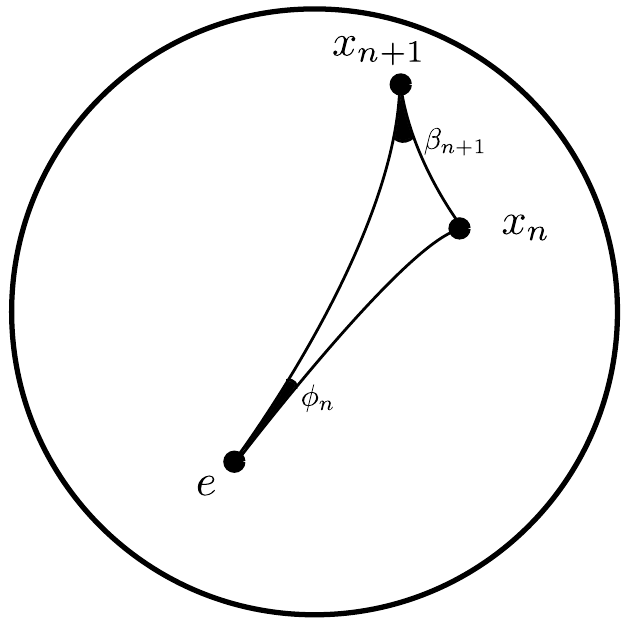: 0x0 pixel, 300dpi, 0.00x0.00 cm, bb=
%  \end{center}
%  \caption{test}
%  \label{fig:hyp_plane}
% \end{wrapfigure}

% \fxnote{add a picture here}
 
 Define the angle, when viewed from the origin, between successive points: $\phi_n:=\measuredangle(x_n e x_{n+1})$.
 Define also the angle $\beta_{n+1}:= \measuredangle(x_n x_{n+1} e)$.
 
 The law of sines then gives
 \begin{align*}
  \sin \phi_n = \sin \beta_{n+1} \cdot \frac{\sinh (\dist (x_n,x_{n+1}))}{ \sinh (\dist(e,x_{n+1}) ) }
 \end{align*}
 \begin{wrapfigure}[10]{r}{0.4\textwidth}
 	\includegraphics[width=0.38\textwidth]{regular_pts_hyperbolic_plane}
 	%\caption{Regular sequence in the hyperbolic plane}
 	\label{fig:reg_seqn}
 \end{wrapfigure}
 \leavevmode

 Applying the bounds $\frac 1 {10}|x|\leq |\sin x|$ (for $|x|\leq \pi/10$) and $|\sin x|\leq 1$ to $\phi_n$ and $\beta_{n+1}$ respectively, combined with the assumed bounds on distance, yields
 \begin{wrapped}{align*}
   |\phi_n| &\leq 10 \cdot \frac{\sinh ( \dist(x_n,x_{n+1}) ) }{\sinh ( \dist (e,x_{n+1}) ) } \leq \frac{e^{o(n)}}{e^{\theta n + o(n)} } \\ \nonumber
  &\leq e^{-\theta n +o(n)}
 \end{wrapped}
 Therefore, the total angle $\phi:=\sum_{n\geq 0} \phi_n$ converges uniformly.
 The geodesic launched at angle $\phi$ and speed $\theta$ will be $o(n)$-close to $x_n$ (again, by an application of the law of sines).
\hfill \qed

\begin{remark}
 \leavevmode
 \begin{enumerate}
 \item[(i)]
 To generalize the above proof to all symmetric spaces $X=G/K$, one needs two further ingredients.
 The first is a comparison theorem for triangles in manifolds with all sectional curvatures bounded by $ \kappa\leq 0$.
 The second is a more detailed use of the structure theory of Lie groups.
 \item[(ii)]
 A general symmetric space $X=G/K$ will contain geodesically embedded copies of Euclidean spaces $\bR^{\dim \fraka}$, called flats.
 These arise from taking the exponential map of maximal abelian subalgebras inside $\frakp$.
 If the sequence of points $\{x_n\}$ is contained in such a flat, the condition on convergence of vector-valued distance from \autoref{thm:reg_pts} is both necessary and sufficient for the existence of the geodesic.
 \end{enumerate}
\end{remark}

\subsection{Symmetric spaces and the Oseledets theorem}

An introduction to the formalism used below is available in the book of Zimmer \cite{Zimmer}.
For this section we consider more general real reductive Lie groups $G$.
This class includes $\GL_n\bR$, not just $\SL_n\bR$, so it allows factors such as $\bR^\times$.

\noindent For most considerations, including symmetric spaces and the Oseledets theorem, everything can be reduced to semisimple groups.
Indeed, for a cocycle in $\GL_n\bR$, the maps can be rescaled to assume the cocycle acts by matrices with determinant $ \pm 1 $.

\subsubsection{Setup.}
Recall that $T:(\Omega,\mu)\to (\Omega,\mu)$ is an ergodic probability measure preserving transformation and $E\to \Omega$ is a vector bundle equipped with a metric $\norm{-}$.
The extension of $T$ to a cocycle on $E$ is a collection of linear maps between the fibers of $E$:
 \begin{align}
  T_\omega : E_\omega \to E_{T\omega}
 \end{align}
If $G$ is a reductive Lie group, what does it mean to say that the maps $T_\omega$ belong to this group?
After all, the maps are between distinct vector spaces.

\subsubsection{Principal bundles.}
Suppose that $E\to \Omega$ is a rank $n$ vector bundle.
For each fiber $E_\omega$ consider the set of isomorphisms to a fixed vector space
 \begin{align}
  P_\omega := \Isom (\bR^n,E_\omega)
 \end{align}
Note that $P_\omega$ carries a right action of $\GL_n\bR$ by precomposing the isomorphism.
This action makes it isomorphic to $\GL_n\bR$, except that it does not have a distinguished basepoint.
The spaces $P_\omega$ glue to give a fiber bundle $P\to \Omega$.

More generally, if $E$ has some extra structure, e.g. a symplectic form, then $P_x$ can be the set of isomorphisms respecting the extra structure on the source and target.
It will be isomorphic to a subgroup of $\GL_n\bR$, e.g. the symplectic group if $E$ carries a symplectic form.

By definition, a principal $G$-bundle over $\Omega$ is a space $P$ with a map $P\xrightarrow{\pi} \Omega$ and a right action of $G$ on $P$ such that $\pi(pg)=\pi(p)$.
Moreover, each fiber $P_\omega$ must be isomorphic to $G$ with the right $G$-action.

\subsubsection{Induced bundles.}
Let $P\to \Omega$ be a principal $G$-bundle, and suppose that $G$ acts on another space $F$.
The associated bundle over $\Omega$ with fiber $F$ is the quotient
 \begin{align*}
  P \times_G F :=\raisebox{.2em}{$\{(p,f)\in P\times F \}$ } / \raisebox{-.2em}{$(p,f) \sim (pg,g^{-1}f)$}
 \end{align*}

\subsubsection{Cocycles on principal bundles.}
Suppose that $\Omega$ carries a $T$-action and a principal $G$-bundle $P\xrightarrow{\pi} \Omega$.
Then a cocycle $T:P\to P$ is a lift of the $T$-action from $\Omega$ to $P$ which commutes with the $G$-action of $P$ on the right.

If $G$ acts on a space $F$, then the cocycle $T$ on $P$ extends to a natural action of $T$ on $P\times_G F$.
For instance, if $G$ has a representation on $F=\bR^n$, then $P\times_G \bR^n$ is a vector bundle with a linear cocycle.

\begin{example}
 Suppose that $G=\GL_n(\bR)$ and $K=\operatorname{O}_n(\bR)$.
 Then $X=G/K$ is the space of metrics (coming from positive-definite inner products) on $\bR^n$, with distinguished basepoint the euclidean metric (corresponding to the coset $e K$).
 Call this the standard metric $\norm{-}_{std}$.
 Then for $x=gK\in G/K$ the metric is given by $\norm{v}_x = \norm{g^{-1}v}_{std}$.
 
 Now, a vector bundle $E\to \Omega$ gives rise to a principal $G=\GL_n\bR$-bundle $P\to \Omega$.
 For $X:=G/K$ consider the induced bundle $\crX:=P\times_G X \to \Omega$.
 A fiber $\crX_\omega$ is the space of metrics on $E_\omega$.
 Thus, a metric on $E\to \Omega$ is the same as a choice of point in each fiber of $\crX\xrightarrow{\pi} \Omega$, i.e. a map $\sigma:\Omega \to \crX$ such that $\pi\circ \sigma(\omega) = \omega$.
 
 If $E$ is a cocycle over $T:\Omega\to \Omega$, the action of $T$ extends to $P$ and $\crX$.
 Typically it will \emph{not} preserve the metric on $E$ but rather give an action $\sigma \mapsto T\sigma$ on the space of metrics; for $v\in E_\omega$ the new metric is defined by the action of $T$:
 \begin{align*}
  \norm{v}_{T\sigma(\omega)} := \norm{Tv}_{\sigma(T\omega)}
 \end{align*}
\end{example}

\begin{proposition}
\label{prop:Oseledets_equiv_regularity}
 Consider the sequence of vector spaces and linear maps
 \begin{align*}
  V_\omega \xrightarrow{T} V_{T\omega} \xrightarrow{T} V_{T^2\omega}\to \cdots
 \end{align*}
 Recall that each space $V_{T^\bullet\omega}$ carries a metric $\norm{-}$.
 Then the following statements are equivalent:
 \begin{enumerate}
  \item [(i)] The Oseledets theorem holds for $E_\omega$, i.e. there exist numbers $\lambda_1>\cdots > \lambda_k \neq -\infty$ and a filtration $V^{\leq \lambda_i}$ such that each $v\in V^{\leq \lambda_i}\setminus V^{\leq \lambda_{i+1}}$ has the asymptotic growth
  \begin{align*}
   \lim_{N\to \infty} \frac{1}{N}  \log \norm{T^N v} = \lambda_i
  \end{align*}
  \item [(ii)] The sequence of metrics $\norm{-}_N$ on $V_\omega$ defined by $\norm{v}_N:=\norm{T^N v}$ is a regular sequence (see \autoref{sssec:reg_sequence}) in the symmetric space $\GL(V_\omega)/\operatorname{O}(V_\omega)$.
  Here $\operatorname{O}(V_\omega)$ is the group of orthogonal transformations preserving the initial metric on $V_\omega$.
  \item [(iii)] There exists an invertible liner map $\Lambda:V_\omega \to V_\omega$ which is symmetric and self-adjoint (for the fixed metric on $V_\omega$) and such that $\forall v\in V_\omega$ satisfies
  \begin{align*}
   |\ip{ \Lambda^{-2n} ({T^n})^\dag T^n v,v }| = o(n)
  \end{align*}
  Above $({T^n})^{\dag}$ denotes the adjoint for the initial metrics on $V_\omega$ and $V_{T^n\omega}$.
 \end{enumerate}
\end{proposition}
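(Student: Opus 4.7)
The plan is to identify the symmetric space $X=\GL(V_\omega)/\operatorname{O}(V_\omega)$ with the cone of symmetric positive-definite operators on $V_\omega$ via $gK\mapsto gg^\dag$. Under this identification the basepoint is the identity (the given metric), and the point corresponding to the metric $\norm{-}_N$ is $M_N := (T^N)^\dag T^N$, since $\norm{v}_N^2 = \ip{M_N v, v}$. I would then prove (i)$\Rightarrow$(ii), then (ii)$\Rightarrow$(i), and finally (ii)$\Leftrightarrow$(iii).

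For (i)$\Rightarrow$(ii), apply Kaimanovich's regularity criterion (\autoref{thm:reg_pts}) to $\{M_N\}\subset X$. For the small-step condition, write $T^{N+1}=S_N T^N$ with $S_N:=T_{T^N\omega}$; using that $\GL$ acts on $X$ by isometries via $g\cdot M=gMg^\dag$, translating both points by $(T^N)^{-\dag}$ yields $\dist(M_N,M_{N+1})=\dist(I, S_N^\dag S_N)$, which is bounded by a constant multiple of $\log^+\norm{S_N}_{op}+\log^+\norm{S_N^{-1}}_{op}$. Both quantities are in $L^1$ (the second because $\lambda_k>-\infty$), and Birkhoff gives linear growth of their partial sums, so each summand is $o(N)$ along a.e.\ orbit. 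For the Cartan projection, $r(M_N)\in\fraka^+$ is (up to a factor of $2$) the sorted vector $(\log\sigma_k(T^N))_k$; using $\sigma_1(T^N)\cdots\sigma_k(T^N) = \norm{\Lambda^k T^N}_{op}$ from \autoref{sssec:sing_values} together with the Oseledets theorem applied to the exterior-power cocycles $\Lambda^k V$ (whose top exponent is $\lambda_1+\cdots+\lambda_k$ with multiplicity), one concludes $\tfrac{1}{N}\log\sigma_k(T^N)\to\lambda_k'$ for each $k$, so $r(M_N)/N$ converges to a vector $\alpha\in\fraka^+$ encoding the Lyapunov spectrum.

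For (ii)$\Rightarrow$(i), the tracking geodesic $\gamma(t)=k\exp(t\alpha)\cdot e$ determines subspaces $W^{\leq\lambda_i}\subset V_\omega$ as the $k$-rotates of the eigenspaces of $\alpha$ with eigenvalues $\leq\lambda_i$ (independent of $t$). The relation $\dist(M_N,\gamma(N))=o(N)$, combined with the formula $\dist(M,M')^2=\sum(\log\mu_i)^2$ for $\mu_i$ the eigenvalues of $M^{-1/2}M'M^{-1/2}$, forces the log-eigenvalues of $\gamma(N)^{-1/2}M_N\gamma(N)^{-1/2}$ to be $o(N)$. Testing against vectors $v\in W^{\leq\lambda_i}\setminus W^{\leq\lambda_{i+1}}$ then gives $\tfrac{1}{N}\log\ip{M_N v,v}\to 2\lambda_i$, i.e.\ $\tfrac{1}{N}\log\norm{T^N v}\to\lambda_i$, reproducing the forward Oseledets filtration (which is canonical, hence coincides with $W^{\leq\lambda_\bullet}$). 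For (ii)$\Leftrightarrow$(iii), parametrize geodesic rays from the basepoint in $X$ as $\gamma(t)=\exp(2tA)=\Lambda^{2t}$ with $A$ symmetric and $\Lambda:=\exp(A)$ symmetric positive-definite and invertible (since $\lambda_k\neq-\infty$); then $\gamma(N)=\Lambda^{2N}$, and $\dist(M_N,\Lambda^{2N})=o(N)$ is equivalent to all log-eigenvalues of $\Lambda^{-N}M_N\Lambda^{-N}$ being $o(N)$. The self-adjointness identity $\ip{\Lambda^{-2n}M_n v, v}=\ip{(\Lambda^{-n}M_n\Lambda^{-n})\Lambda^n v,\Lambda^{-n}v}$ transfers this eigenvalue control to the bilinear pairing in (iii).

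The main obstacle will be in (ii)$\Rightarrow$(i): extracting the Oseledets filtration from the rotational component $k$ of the tracking geodesic. When several Lyapunov exponents coincide, $k$ is only defined modulo the centralizer of $\alpha$ in $K$, so only a partial flag is canonically determined --- but this matches exactly the filtration (rather than full decomposition) produced by \autoref{thm:Oseledets}. Promoting approximate eigenspaces of the near-geodesic $M_N$ to genuine limit subspaces requires continuity of spectral projection under $o(N)$-perturbations in the log-scale, which holds precisely because distinct $\lambda_i$'s give an asymptotic spectral gap.
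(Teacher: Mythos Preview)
Your (ii)$\Leftrightarrow$(iii) matches the paper's, but your (i)$\Rightarrow$(ii) takes a detour through Kaimanovich's criterion that the paper avoids, and this detour introduces a genuine gap. The proposition is a \emph{pointwise} equivalence at a fixed $\omega$: no measure, no ergodicity, no integrability is assumed. Yet to verify the small-step hypothesis of Kaimanovich you invoke $\log^+\norm{S_N^{\pm 1}}\in L^1$ and Birkhoff, neither of which is available here. Your parenthetical ``the second because $\lambda_k>-\infty$'' is also incorrect: finiteness of the bottom exponent does not imply $\log^+\norm{T_\omega^{-1}}\in L^1$ (the implication runs the other way, via Kingman). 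Similarly, appealing to ``the Oseledets theorem applied to the exterior-power cocycles'' for Cartan-projection convergence is only legitimate if you mean that hypothesis (i) for $V_\omega$ formally implies hypothesis (i) for $\Lambda^k V_\omega$ --- a linear-algebra fact --- rather than the ergodic theorem itself.

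The paper's route is shorter and stays pointwise throughout. It proves (i)$\Leftrightarrow$(iii) by explicit construction: given the filtration $V^{\leq\lambda_j}$, take successive orthogonal complements to get $V_\omega=\bigoplus V^{\lambda_j}$ and let $\Lambda$ act as the scalar $e^{\lambda_j}$ on $V^{\lambda_j}$; conversely, the eigenspace filtration of $\Lambda$ recovers $V^{\leq\lambda_j}$. Then (ii)$\Leftrightarrow$(iii) is immediate from the definitions: condition (iii) says precisely that the geodesic $t\mapsto \Lambda^{2t}$ through the basepoint tracks $M_N=(T^N)^\dag T^N$ sublinearly, which is the definition of regularity in \autoref{sssec:reg_sequence}. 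In particular (i)$\Rightarrow$(ii) never needs Kaimanovich --- the tracking geodesic is handed to you by $\Lambda$. What you have sketched is much closer to the proof of \autoref{thm:Oseledets_geometric}, where the ergodic machinery is legitimately used to \emph{establish} (ii) from the cocycle hypotheses; here the task is only to show that (i), (ii), (iii) are reformulations of one another.
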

\begin{proof}
 For the equivalence of (i) and (iii), assume first (i).
 Given the Oseledets filtration $V^{\leq \lambda_j}_\omega$, define $V^{\lambda_j}_\omega$ as the orthogonal complement of $V^{\lambda_{j+1}}_\omega$ inside $V^{\leq \lambda_j}_\omega$.
 These spaces give a direct sum decomposition of $V_\omega$.
 Declare $\Lambda$ to act as the scalar $e^{\lambda_j}$ on $V^{\lambda_j}_\omega$.
 Then $\Lambda$ satifies (iii).
 
 Conversely, given $\Lambda$, define $V^{\lambda_j}_\omega$ using the eigenspaces of $\Lambda$ and $V^{\leq \lambda_j}_\omega$ using the partial sums of $V^{\lambda_j}_\omega$.
 The asymptotic behavior guaranteed by (iii) shows that $V^{\leq \lambda_j}_\omega$ satisfies the properties of the Oseledets filtration.
 
 Finally, the equivalence of (ii) and (iii) follows from the definition of a regular sequence in \autoref{sssec:reg_sequence} and the description of a geodesic in \autoref{sssec:geodesic_desc}.
\end{proof}

\begin{remark}
 \autoref{prop:Oseledets_equiv_regularity} shows the Oseledets theorem for the cocycle $V\to \Omega$ is equivalent to a statement on the associated bundle of symmetric spaces $\crX:=P\times_G X$ where $X=G/K$ and $G=\GL_n\bR, K=\SO_n\bR$.
 
 Namely, we have the linear maps $T^N:V_\omega \to V_{T^N\omega}$ and the initial norms $\norm{-}$ on the corresponding spaces.
 The dynamics defines new norms $\norm{v}_N:=\norm{T^N v}$ on $V_\omega$ and it suffices to show that for a.e. $\omega\in \Omega$, this sequence of norms is regular in the symmetric space $\crX_\omega$.
 
 However, Kaimanovich's \autoref{thm:reg_pts} gives a simple criterion to check regularity of a sequence (see \autoref{thm:Oseledets_geometric} below).
\end{remark}

\subsection{Geometric form of the Oseledets theorem}

\begin{theorem}
\label{thm:Oseledets_geometric}
 Let $E\to \Omega$ be a cocycle over an ergodic measure-preserving transformation $T:(\Omega,\mu)\to (\Omega,\mu)$.
 Suppose that $E$ carries a metric $\norm{-}$ such that (see \eqref{eqn:cond_L1_bdd_cocycle})
 \begin{align*}
  \int_\Omega \log^+ \norm{T_\omega}_{op} d\mu(\omega) <\infty 
  \textrm{ and }
  \int_\Omega \log^+ \norm{T_\omega^{-1}}_{op} d\mu(\omega) <\infty
 \end{align*}
 Consider the associated symmetric space bundle $\crX$ whose fiber over $\omega\in\Omega$ is the space of metrics on $E_\omega$.
 Then the sequence of metrics defined by $\norm{v}_N:=\norm{T^N v}$ is a regular sequence in $\crX_\omega$, for $\mu$-a.e. $\omega \in \Omega$.
\end{theorem}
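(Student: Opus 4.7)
The plan is to deduce this geometric theorem as a direct corollary of the classical \autoref{thm:Oseledets} applied in both time directions, combined with the equivalence (i)$\Leftrightarrow$(ii) recorded in \autoref{prop:Oseledets_equiv_regularity}. That proposition already converts the analytic statement about the growth of $\norm{T^N v}$ into the geometric statement about regularity of the sequence of metrics in the fiber $\crX_\omega$ of the symmetric space bundle. So the only remaining work is to verify that \autoref{thm:Oseledets} produces an Oseledets filtration with no $-\infty$ exponent, which is precisely where the two-sided integrability assumption enters.

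First, I apply \autoref{thm:Oseledets} to the cocycle $(E,T)$: the hypothesis $\int_\Omega \log^+\norm{T_\omega}_{op}\,d\mu<\infty$ yields a forward Oseledets filtration with exponents $\lambda_1>\cdots>\lambda_k$, where a priori $\lambda_k$ could equal $-\infty$. Next, I apply \autoref{thm:Oseledets} to the inverse cocycle over $T^{-1}$; by $T$-invariance of $\mu$, the hypothesis $\int_\Omega \log^+\norm{T_\omega^{-1}}_{op}\,d\mu<\infty$ supplies the required integrability. This produces backward exponents $\eta_1>\cdots>\eta_{k'}$ with $\eta_1$ automatically finite, since it is bounded by the $L^1$-integral of $\log^+\norm{T_\omega^{-1}}_{op}$. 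The compatibility between forward and backward spectra recorded in the Variant subsection forces $k'=k$ and $\eta_j=-\lambda_{k+1-j}$; in particular $\lambda_k = -\eta_1 > -\infty$, so all Lyapunov exponents are finite. This places us in condition (i) of \autoref{prop:Oseledets_equiv_regularity}, and the equivalence (i)$\Leftrightarrow$(ii) immediately gives that the sequence of metrics $\norm{v}_N:=\norm{T^N v}$ is a regular sequence in $\crX_\omega = \GL(E_\omega)/\operatorname{O}(E_\omega)$ for $\mu$-a.e.\ $\omega\in\Omega$.

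The main obstacle is not a genuine analytic one: the hard work is already packaged in \autoref{thm:Oseledets} and in \autoref{prop:Oseledets_equiv_regularity}. The conceptual subtlety is recognizing why the two-sided integrability is genuinely needed here, where the one-sided hypothesis sufficed in the classical form. Geometric regularity in a non-positively curved symmetric space, in the sense of \autoref{sssec:reg_sequence}, demands convergence of $r(x_N)/N$ in the positive Weyl chamber $\fraka^+$ together with an approximating geodesic ray at some \emph{finite} speed $\theta$; this cannot possibly hold when the smallest Lyapunov exponent is $-\infty$, and the backward integrability is exactly what rules that pathology out via the spectral symmetry $\eta_j = -\lambda_{k+1-j}$.
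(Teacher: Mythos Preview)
Your proof is correct, but it takes a genuinely different route from the paper's own argument. You deduce the geometric statement as a corollary of the classical \autoref{thm:Oseledets} (proved in Section~2) together with the equivalence (i)$\Leftrightarrow$(ii) in \autoref{prop:Oseledets_equiv_regularity}, using the backward integrability only to rule out $\lambda_k=-\infty$ via the spectral symmetry $\eta_j=-\lambda_{k+1-j}$. The paper instead gives a direct, self-contained proof that does \emph{not} invoke \autoref{thm:Oseledets} at all: it applies Kingman's subadditive ergodic theorem to $f_N(\omega)=\log\norm{T^N_\omega}_{op}$ and to its exterior-power analogues to obtain convergence of all normalized singular values, hence of the Cartan projection $r(x_N)/N$; then it uses Birkhoff on $\log^+\norm{T_\omega^{\pm 1}}_{op}$ to verify the small-steps condition; and finally it appeals to Kaimanovich's criterion, \autoref{thm:reg_pts}, to conclude regularity. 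The payoff of the paper's approach is that, combined with \autoref{prop:Oseledets_equiv_regularity}, it furnishes a second, geometric proof of the Oseledets theorem based on Kingman and \autoref{thm:reg_pts}, independent of the inductive argument via Lemmas~\ref{lemma:dichotomy} and~\ref{lemma:splitting}. Your approach is shorter and perfectly legitimate once Section~2 is in hand, but it does not exhibit this independence and in particular never uses \autoref{thm:reg_pts}, which is the main geometric tool the section is designed to showcase.
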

From \autoref{prop:Oseledets_equiv_regularity}, the above theorem is equivalent to the usual form of the \hyperref[thm:Oseledets]{Oseledets Theorem \ref*{thm:Oseledets}}.
The proof of \autoref{thm:Oseledets_geometric} will make use of the Subadditive Ergodic Theorem, recalled below.
A proof is available, for example, in any of \cite{Ledrappier_lectures, Mane, Viana}.

\begin{theorem}[Kingman Subadditive Ergodic Theorem]
\label{thm:Kingman}
 Let $T\curvearrowright(\Omega,\mu)$ be an ergodic probability measure-preserving transformation.
 Suppose that $\{f_i\}$ is a sequence of functions on $\Omega$ with $f_1\in L^1(\Omega,\mu)$ and satisfying the subadditivity condition
 \begin{align}
 \label{eqn:subadd}
  f_i(\omega) + f_j(T^i\omega) \geq f_{i+j}(\omega) \hskip 1em \forall \omega\in \Omega, \hskip 1em \forall i,j\geq 1
 \end{align}
 Then for $\mu$-a.e. $\omega$ the limit $\frac 1N f_N(\omega)$ exists (perhaps $ -\infty $) and can be computed as
 \begin{align*}
  \lim_{N\to \infty} \frac 1N f_N(\omega) = \inf_N \frac 1N \int_{\Omega} f_N(\omega) d\mu(\omega)
 \end{align*}
\end{theorem}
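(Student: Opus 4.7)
Integrating the pointwise subadditivity \eqref{eqn:subadd} and using $T$-invariance of $\mu$ shows that $N \mapsto \int_\Omega f_N\, d\mu$ is a subadditive sequence in $[-\infty, \infty)$, so Fekete's lemma gives
\[
  L = \inf_N \tfrac{1}{N}\textstyle\int f_N\, d\mu = \lim_{N\to\infty} \tfrac{1}{N}\textstyle\int f_N\, d\mu \in [-\infty, \textstyle\int f_1\, d\mu].
\]
The plan is then to establish the two one-sided pointwise bounds $\limsup_N f_N/N \leq L$ and $\liminf_N f_N/N \geq L$ almost everywhere, which together give the claim.

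Write $\bar f := \limsup_N f_N/N$ and $\underline f := \liminf_N f_N/N$. Subadditivity $f_{N+1} \le f_1 + f_N\circ T$ yields $\bar f \le \bar f \circ T$; the reverse inequality $f_{N+1} \le f_N + f_1\circ T^N$ combined with $\tfrac{1}{N} f_1(T^N\omega) \to 0$ a.e.\ (a consequence of \hyperref[thm:Birkhoff]{Birkhoff's theorem} applied to $|f_1|$) closes the invariance loop, and identically for $\underline f$. By ergodicity, $\bar f \equiv c$ and $\underline f \equiv c'$ a.e.\ for constants $c' \le c$.

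For the upper bound $c \le L$, fix $k \ge 1$ and write $N = qk + r$ with $0 \le r < k$. Iterating \eqref{eqn:subadd} gives
\[
  f_N(\omega) \le \sum_{j=0}^{q-1} f_k(T^{jk}\omega) + f_r(T^{qk}\omega).
\]
Dividing by $N$ and letting $N\to\infty$, the first term converges a.e.\ to $\tfrac{1}{k}$ times the conditional expectation of $f_k$ on the $T^k$-invariant $\sigma$-algebra (apply \hyperref[thm:Birkhoff]{Birkhoff} to each $T^k$-ergodic component of $(\Omega,\mu)$, using $f_k \in L^1$ by iterated subadditivity), while the second term vanishes since $f_r \in L^1$. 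Thus $\bar f \le \tfrac{1}{k}$ times this conditional expectation; integrating and letting $k\to\infty$ yields $c \le L$.

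The matching lower bound $c' \ge L$ is the main obstacle, since subadditivity provides no direct lower bound on $f_N$. Assuming $L > -\infty$ (otherwise $c' \le c \le -\infty$ already gives $c' = L$), proceed via a Katznelson--Weiss stopping-time covering argument. Fix $\varepsilon > 0$ and, using $\underline f = c'$ a.e., set $\tau(\omega) := \min\{n \ge 1 : f_n(\omega) \le n(c' + \varepsilon)\}$, finite a.e. Choose a cutoff $M$ so large that $\mu\{\tau > M\} < \varepsilon$, and define the bounded stopping time $\tau_M := \tau$ on $\{\tau \le M\}$ and $\tau_M := M$ on $\{\tau > M\}$. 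Build the greedy decomposition $s_0 := 0$, $s_{j+1} := s_j + \tau_M(T^{s_j}\omega)$ of $[0, N)$. Applying \eqref{eqn:subadd} block by block controls $f_N(\omega)$ by a sum of contributions: at most $\tau(T^{s_j}\omega)(c' + \varepsilon)$ on each \emph{good} block $\{T^{s_j}\omega \in \{\tau \le M\}\}$, and at most $\sum_{i=0}^{M-1} f_1(T^{s_j+i}\omega)$ on each \emph{bad} block. Integrating in $\omega$ and using $T$-invariance of $\mu$ along with \hyperref[thm:Birkhoff]{Birkhoff} applied to $\mathbf 1_{\{\tau > M\}}$ and to $|f_1|$ to bound the contribution of bad blocks by $O(\varepsilon)$, one obtains $\tfrac{1}{N}\int f_N\, d\mu \le (c' + \varepsilon) + O(\varepsilon)$ for all large $N$. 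Taking $N \to \infty$ gives $L \le c' + O(\varepsilon)$, and $\varepsilon \to 0$ forces $c' \ge L$. Combined with $c' \le c \le L$, this closes the loop and yields $\bar f = \underline f = L$ a.e., completing the argument.
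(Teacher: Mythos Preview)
The paper does not actually prove \autoref{thm:Kingman}; it merely states the result and refers the reader to \cite{Ledrappier_lectures, Mane, Viana} for a proof. So there is no ``paper's own proof'' to compare against. Your sketch follows the standard Katznelson--Weiss approach that appears (in some form) in those references, and the overall strategy is sound.

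One small wrinkle: your argument for the $T$-invariance of $\bar f$ and $\underline f$ is not quite right as written. From $f_{N+1}\le f_1 + f_N\circ T$ you correctly obtain $\bar f \le \bar f\circ T$, but the second inequality $f_{N+1}\le f_N + f_1\circ T^N$ only yields $\limsup_N f_{N+1}/N \le \limsup_N f_N/N$, which is vacuous; it does not give $\bar f\circ T \le \bar f$. The clean way is to note that $\bar f \le \bar f\circ T$ alone suffices: for each $a\in\bR$ the set $\{\bar f > a\}$ is contained in $T^{-1}\{\bar f > a\}$, hence by $T$-invariance of $\mu$ they agree up to null sets, and ergodicity forces $\bar f$ to be a.e.\ constant (possibly $-\infty$). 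The analogous argument handles $\underline f$. With this repair the rest of your outline goes through.
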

Note that integrating the subadditivity condition \eqref{eqn:subadd} it follows that in the statement of the Subadditive Ergodic Theorem, each $f_i$ is bounded above by a function in $L^1(\Omega,\mu)$.
Therefore each $f_i$ is the sum of a function in $L^1$ and an everywhere negative function.

\subsubsection{Fekete's lemma}
\label{sssec:linear_drift}
Let $\{a_n\}$ be a subadditive sequence, i.e. assume that
\begin{align*}
a_{n+m} \leq a_n + a_m \hskip 1em \forall n,m\geq 1
\end{align*}
It is a classical exercise that the sequence $\frac 1N a_N$ has a limit given by the infimum:
\begin{align*}
\lim_N \frac 1N a_N = \inf_N \frac 1N a_N
\end{align*}
which could potentially be $ -\infty $.

\begin{proof}[Proof of \autoref{thm:Oseledets_geometric}]
 Define the sequence of functions 
 \begin{align*}
  f_N(\omega):=\log \norm{T^N_\omega}_{op}
 \end{align*}
 The operator norms are computed for the initial norm $\norm{-}$ on $E_\omega$ and $E_{T^N\omega}$.
 
 Since for two linear maps $\norm{A\circ B}_{op} \leq \norm{A}_{op}\cdot \norm{B}_{op}$, the sequence $\{f_i\}$ satisfies the subadditivity condition \eqref{eqn:subadd} in the Kingman Theorem.
 Since $f_1\in L^1(\Omega,\mu)$, \autoref{thm:Kingman} applies and so there exists $\lambda_1$ such that
 \begin{align}
  \lim \frac 1N \log \norm{T^N_\omega}_{op} = \lambda_1
 \end{align}
 Finally, recall that the operator norm is the same as the first singular value of $T_\omega$ (see \eqref{eqn:sigma_1_op_norm}).
 Thus the quantity $\frac 1N \log (\sigma_1(T^N_\omega))$ converges $\mu$-a.e.
 
 Apply now the same construction to the exterior power bundles $\Lambda^k V$ to find that in fact all (normalized) singular values converge.
 Finally, recall that the Cartan projection $r(x_N)$ was given by considering the singular values of the corresponding operator, and so $\frac 1N r(x_N)$ converge (where $x_N$ denote the pull-back metrics after $N$ steps).
 
 To check the small steps condition in \autoref{thm:reg_pts}, recall that if $f\in L^1(\Omega,\mu)$ then by the Birkhoff \autoref{thm:Birkhoff} (see \eqref{eqn:L1_basic_bound}) it follows that
 \begin{align}
  \textrm{for }\mu\textrm{-a.e. }\omega\in \Omega \hskip 1em \frac 1 N f(T^N \omega) \to 0
 \end{align}
 Setting $f(\omega)=\log^+ \norm{T_\omega}_{op}$ and $ f(\omega)=\log^+ \norm{T_\omega^{-1}}_{op} $ gives the desired bound on the increments of the highest and lowest singular values, and hence on the intermediate ones.
\end{proof}

\begin{remark}
	The integrability assumption on $ T^{-1}_\omega $ in \autoref{thm:Oseledets_geometric} is not strictly necessary.
	In order to have a formulation without it, one needs to extend the notion of regular sequence to allow for super-linear divergence in certain ``flat'' directions in the symmetric space.
	Note that in the case of a cocycle valued in $ \SL_n(\bR) $, the integrability condition for $ T_\omega $ implies the one for $ T^{-1}_{\omega} $.
\end{remark}

\section{The general Noncommutative Ergodic Theorem}
\label{sec:general_ncet}

This lecture follows closely the notes of Karlsson \cite{Karlsson_notes}.
The main result in the case of isometries was proved by Karlsson--Ledrappier \cite{Karlsson_Ledrappier} and recently extended by Gou\"{e}zel--Karlsson to semi-contractions \cite{Gouezel_Karlsson}.

Horofunctions are introduced in \autoref{ssec:horofunctions}; the case of divergence to infinity along geodesics, i.e. the case of Busemann functions, is illustrated in \autoref{ssec:buseman_fctn}.
A very general form of a Noncommutative Ergodic Theorem is stated in \autoref{ssec:ncet} and proved in \autoref{ssec:proof_ncet}.
It is illustrated with a non-trivial example in \autoref{ssec:example}.

\subsection{Horofunctions}
\label{ssec:horofunctions}

\subsubsection{Setup.}
Let $(X,d)$ be a metric space which is proper, i.e. balls of bounded radius are compact.
Let $C^0(X)$ be the space of continuous functions on $X$, with the $\sup$-norm.
Fix a basepoint $x_0\in X$.
This defines an embedding
\begin{align*}
 \Phi :  X &\to C^0(X)\\
 x &\mapsto \Phi(x)(y)=d(x,y)-d(x,x_0)
\end{align*}
To simplify notation, $\Phi(x)$ will also be denoted by $h_x$.
\begin{remark}
 \leavevmode
 \begin{enumerate}
  \item[(i)] The functions $h_x$ are $1$-Lipschitz, since they are given by distance to $x$ with a constant subtracted.
  \item[(ii)] The map $\Phi$ itself is $1$-Lipschitz, since
  \begin{align*}
   |\Phi(x)(y) - \Phi(x)(z)| = |d(x,y)-d(x,z)| \leq d(y,z)
  \end{align*}
 \item[(iii)] The map is normalized to have $h_x(x_0)=0$ and it is moreover injective.
 Indeed, if $d(x,x_0)\geq d(y,x_0)$ then $\Phi(y)(x)-\Phi(x)(x)\geq d(x,y) >0$.
 \end{enumerate}
\end{remark}
\begin{definition}
\label{def:metric_bord}
The \emph{metric bordification} of $X$ is its closure inside $C^0(X)$:
\begin{align*}
 \conj{X} := \conj{\Phi(X)} = X \cap \partial X.
\end{align*}
If $X$ is proper, then $\conj{X}$ is (sequentially) compact by Arzela--Ascoli and because the functions $h_x$ are $1$-Lipschitz.
Functions in $\partial X$ are called \emph{horofunctions} on $X$.
\end{definition}

\begin{example}
\leavevmode
 \begin{enumerate}
  \item If $X=\bR^2$ with Euclidean distance, then $\partial \bR^2$ equals the linear functions of norm $1$.
  \item If $\bD^2\cong \bH^2$ is the hyperbolic plane, then the boundary is isomorphic to $\bR\bP^1$.
  In the upper halfplane model, corresponding to $\infty\in \partial \bH$ is the function $-\log y$ (with $x_0 = \sqrt{-1}\in \bH$).
 \end{enumerate}
\end{example}

\begin{proposition}
 The action of the isometry group of $X$ extends continuously to an action on $\conj{X}$.
 Given an isometry $g$ and a function $h\in \conj{X}$, the action is by the formula
 \begin{align}
 \label{eqn:isom_act_horo}
  g\cdot h(z) := h(g^{-1}z)-h(g^{-1}x_0)
 \end{align}
\end{proposition}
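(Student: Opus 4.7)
The plan is to first verify the formula on the dense subset $\Phi(X)\subset \conj X$, then extend by continuity, and finally check the group law. For $h=h_x$, a direct unfolding gives
\[
g\cdot h_x(z) = \bigl(d(x,g^{-1}z)-d(x,x_0)\bigr) - \bigl(d(x,g^{-1}x_0)-d(x,x_0)\bigr) = d(gx,z)-d(gx,x_0) = h_{gx}(z),
\]
using that $g$ is an isometry. So on $\Phi(X)$ the formula agrees with the natural action $x\mapsto gx$; in particular the correction term $-h(g^{-1}x_0)$ is exactly what is needed to preserve the normalization $h(x_0)=0$ and to restrict the naive translation $h\mapsto h\circ g^{-1}$ back into the normalized slice that contains $\Phi(X)$.

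Next, I would check that for each fixed $g$ the map $h\mapsto g\cdot h$ is continuous on $C^0(X)$ in the topology of uniform convergence on compacta, which is the topology implicit in \autoref{def:metric_bord} (Arzel\`a--Ascoli gives sequential compactness of $\conj X$ in precisely this topology, since horofunctions are $1$-Lipschitz but need not be bounded). If $h_n\to h$ uniformly on a compact $K\subset X$, then $g^{-1}K$ is also compact, so $h_n\circ g^{-1}\to h\circ g^{-1}$ uniformly on $K$, and $h_n(g^{-1}x_0)\to h(g^{-1}x_0)$ as numerical constants; subtraction preserves uniform convergence, so $g\cdot h_n\to g\cdot h$ uniformly on $K$. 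Since $\conj X$ is by definition the closure of $\Phi(X)$, density plus continuity force a unique continuous extension of the action from $\Phi(X)$ to $\conj X$, and this extension is manifestly given by the same formula \eqref{eqn:isom_act_horo}.

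The group law $g_1\cdot(g_2\cdot h)=(g_1g_2)\cdot h$ is then a short direct expansion:
\[
g_1\cdot(g_2\cdot h)(z) = \bigl[h(g_2^{-1}g_1^{-1}z)-h(g_2^{-1}x_0)\bigr] - \bigl[h(g_2^{-1}g_1^{-1}x_0)-h(g_2^{-1}x_0)\bigr],
\]
and the two copies of $h(g_2^{-1}x_0)$ cancel, leaving $h((g_1g_2)^{-1}z)-h((g_1g_2)^{-1}x_0)$ as required. This cocycle-like cancellation is the structural reason for introducing the correction term in the first place.

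The main (admittedly minor) obstacle is pinning down the topology: the sup norm literally named in the setup would not define a metric on the full linear span, since differences of unbounded $1$-Lipschitz functions may be unbounded, so one has to work with uniform convergence on compacta (which agrees with the sup-norm topology on each individual equicontinuous family $\Phi(X)$ restricted to a bounded ball). Once this is settled the remaining verifications are formal, and continuity of the action is an immediate consequence of continuity of the evaluation map $h\mapsto h(y)$ for each $y\in X$.
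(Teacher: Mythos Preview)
Your proof is correct and follows essentially the same approach as the paper: verify that the formula \eqref{eqn:isom_act_horo} agrees with the natural action $h_x\mapsto h_{gx}$ on $\Phi(X)$, and note that the formula defines a continuous map on $C^0(X)$, hence extends to the closure $\conj X$. You give more detail than the paper (an explicit continuity check, the group-law verification, and a remark on the topology), but the structure is the same.
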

\begin{proof}
 The action described in \eqref{eqn:isom_act_horo} is continuous on the space of functions, so it suffices to check its compatibility with the action on $X$.
 Suppose that $x\in X$ has corresponding function $h_x$.
 We have the chain of equalities
 \begin{align}
  \nonumber h_{gx}(z) &= d(gx,z) - d(gx,x_0)\\
  \nonumber &= d(x,g^{-1}z) - d(x,x_0) - (d(x,g^{-1}x_0) - d(x,x_0))\\
  \nonumber &= h_x(g^{-1}z) - h_x(g^{-1}x_0)
 \end{align}
 This shows the compatibility of actions.
\end{proof}

\begin{remark}
 Up to homeomorphism, the space $\conj{X}$ is independent of the choice of basepoint $x_0\in X$.
 Consider the quotient $C^0(X) \to C^0(X)/\{const.\}$ of the space of continuous functions by the constant ones.
 Then the image of $\Phi(X)$ under the quotient is independent of the basepoint $x_0$, and is still an embedding.
\end{remark}

\subsubsection{Semi-contractions}
\label{sssec:semi-contractions}
Let $f$ be a semi-contraction of $X$, i.e. $d(fx,fy)\leq d(x,y)$ for all $x,y\in X$.
Consider the quantity $d(x_0,f^n x_0)$ and apply the triangle inequality with the semi-contraction property to find
\begin{align}
 \nonumber d(x_0,f^{n+m}x_0) &\leq d(x_0,f^n x_0) + d(f^n x_0, f^{n+m}x_0)\\
 \nonumber & \leq d(x_0,f^n x_0) + d(x_0, f^m x_0)
\end{align}
Thus the limit of $\frac 1N d(x_0,f^N x_0)$ exists, and is called the \emph{linear drift} of the semi-contraction $f$.

In fact, the drift can be detected by a single horofunction.

\begin{proposition}[Karlsson]
\label{thm:Karlsson_drift}
 Let $(X,d)$ be a proper metric space and $f$ a semicontraction with linear drift $l$.
 Then there exists $h\in \conj{\Phi(X)}$ such that
 \begin{align*}
 \begin{split}
  &\forall k\geq 0 \hskip 1em h(f^{k}x_0) \leq -l\cdot k \\
  &\forall x\in X \hskip 1em \lim_N \frac {-1}N h(f^N x) = l \label{eqn:lim_all_x}
  \end{split}
 \end{align*}
\end{proposition}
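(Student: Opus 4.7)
The plan is to obtain $h$ as a weak-$*$ subsequential limit of the normalized distance functions $h_{f^{n_i} x_0} \in \overline{\Phi(X)}$ along a carefully chosen sequence of times $n_i$. Compactness of $\overline{\Phi(X)}$ (from the properness of $X$ together with the $1$-Lipschitz property of the embedding $\Phi$) guarantees that such a limit exists once a subsequence is fixed; the content of the result is in choosing the right subsequence so that both inequalities can be verified.

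Consider the sequence $a_n := d(x_0, f^n x_0)$. The semi-contraction property gives
\begin{align*}
a_{n+m} \;\leq\; d(x_0, f^n x_0) + d(f^n x_0, f^{n+m} x_0) \;\leq\; a_n + a_m,
\end{align*}
so $a_n$ is subadditive and, by Fekete's lemma, $a_n/n \to l$. The key combinatorial input is a selection lemma: for every $\epsilon > 0$ there exist infinitely many $n$ such that $a_n - a_{n-k} \geq (l - \epsilon) k$ for every $0 \leq k \leq n$. The proof is by contradiction. If only finitely many $n$ had this property, then for every sufficiently large $n$ one could pick some $k(n) \in [1, n]$ with $a_n - a_{n-k(n)} < (l-\epsilon) k(n)$; iterating the selection starting from any large $N$ and chaining the inequalities yields $a_N < (l - \epsilon) N + O(1)$, which forces $\limsup a_n / n \leq l - \epsilon$ and contradicts Fekete. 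Choosing $\epsilon_i \searrow 0$ and applying the lemma produces times $n_i \to \infty$ along which the inequality $a_{n_i} - a_{n_i - k} \geq (l - \epsilon_i) k$ holds for all $0 \leq k \leq n_i$, and (after passing to a sub-subsequence) $h_{f^{n_i} x_0}$ converges to some $h \in \overline{\Phi(X)}$.

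The two conclusions are then a short calculation. For $k \geq 0$ and $n_i \geq k$, the semi-contraction property of $f^k$ yields $d(f^k x_0, f^{n_i} x_0) \leq d(x_0, f^{n_i - k} x_0) = a_{n_i - k}$, so
\begin{align*}
h_{f^{n_i} x_0}(f^k x_0) \;=\; d(f^{n_i} x_0, f^k x_0) - a_{n_i} \;\leq\; a_{n_i - k} - a_{n_i} \;\leq\; -(l - \epsilon_i) k,
\end{align*}
and passing to the limit gives $h(f^k x_0) \leq -lk$. For the second identity, observe first that $h(x_0) = 0$ and $h$ is $1$-Lipschitz (being a limit of $1$-Lipschitz functions), so $-h(f^N x_0) \leq d(x_0, f^N x_0) = a_N$ and thus $\limsup_N -\frac{1}{N} h(f^N x_0) \leq l$; combined with the first bound applied at $k = N$ this yields equality for $x = x_0$. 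To extend to arbitrary $x \in X$, use that $|h(f^N x) - h(f^N x_0)| \leq d(f^N x, f^N x_0) \leq d(x, x_0)$ by the $1$-Lipschitz property of $h$ and semi-contraction of $f^N$; dividing by $N$ sends this discrepancy to zero. The main obstacle is the subadditive selection lemma in paragraph two, which is where all the genuine content sits; everything else is mechanical once the right $n_i$ have been produced.
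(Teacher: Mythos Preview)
Your proof is correct and follows essentially the same route as the paper's. Both arguments obtain $h$ as a subsequential limit of $h_{f^{n_i}x_0}$ along times $n_i$ selected by a subadditive lemma, then bound $h(f^k x_0)$ from above via the selection and from below via the $1$-Lipschitz property; the only cosmetic difference is that the paper packages the combinatorial step as ``there is a subsequence $\{m_j\}$ such that \emph{any} further subsequence $\{n_i\}$ satisfies $\liminf_{n_i}(a_{n_i-k}-a_{n_i})\leq -lk$'', whereas you use the equivalent form ``for each $\epsilon>0$ there are infinitely many $n$ with $a_n-a_{n-k}\geq(l-\epsilon)k$ for all $k\leq n$''.
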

\begin{exercise}
\label{ex:subseq}
 Suppose $a_{n+m}\leq a_n + a_m$ and $\lim \frac 1N a_N = l$.
 Then there exists a subsequence $\{m_j\}$ of $1,2,3,\ldots$ such that for any further subsequence $\{n_i\}$ of $\{m_j\}$ we have for all $k\geq 0$:
 \begin{align*}
  \liminf_{n_i} \left(a_{n_i - k} - a_{n_i}\right) \leq -l\cdot k
 \end{align*}
\end{exercise}
\begin{proof}[Proof of \autoref{thm:Karlsson_drift}]
 Set $a_n:=d(x_0,f^n x_0)$ and pick the subsequence $\{m_j\}$ provided by \autoref{ex:subseq}.
 Since $\conj{X}$ is compact, pick a further subsequence $\{n_i\}$ such that $f^{n_i}x_0$ converge to an element $h\in \conj{X}$.
 
 Next, observe that $-a_k \leq h(f^k x_0 )$ since from the definition of $h$ we have
 \begin{align*}
  h(f^k(x_0)) & = \lim_{n_i} d(f^{n_i}x_0,f^k x_0) - d(f^{n_i}x_0,x_0) \geq -d(x_0,f^k x_0)
 \end{align*}
 Using the definition of $h$ again, we have
 \begin{align*}
  \nonumber h(f^k(x_0)) &= \lim_{n_i} d(f^{n_i}x_0,f^k x_0) - d(f^{n_i}x_0,x_0) \\
  \nonumber & \leq \liminf_{n_i} \left(a_{n_i-k} - a_{n_i} \right)
 \end{align*}
 It follows that
 \begin{align*}
  -a_k \leq h(f^k(x_0)) \leq - l\cdot k
 \end{align*}
 and this gives the first part.
 Moreover, since $\frac 1k h(f^k x_0) \to l$ and $d(f^k(x_0),f^k(x))$ stays bounded, it follows the limit \eqref{eqn:lim_all_x} holds for all $x\in X$.
\end{proof}

\subsection{Busemann functions}
\label{ssec:buseman_fctn}

For a general metric space, horofunctions can give too large of a compactification.
For geodesic metric spaces, a subclass of horofunctions is distinguished as coming from geodesic rays.
These notions are introduced and illustrated below, and taken up again in \autoref{ssec:cat0_spaces}.

\subsubsection{Geodesics}
An interval $[0,r]\subset \bR$ of unspecified length will be denoted $I$ and a \emph{geodesic segment} inside $X$ will mean an isometric embedding $\gamma:I\to X$.
If the endpoints of the geodesic segment are $x=\gamma(0)$ and $y=\gamma(r)$, its image is denoted $[x,y]$.
The \emph{metric space} is \emph{geodesic} if there is at least one geodesic between any two points.
A \emph{geodesic ray} is an isometric embedding of $[0,+\infty)$ into $X$.

\begin{definition}
\label{def:buseman_fctn}
Let $\gamma:[0,\infty) \to X$ be a geodesic ray starting at $x_0$.
Define
\begin{align*}
 h_\gamma(x) := \lim_{t\to \infty} d(\gamma(t),x)-t
\end{align*}
Equivalently, one can take the limit of $\Phi(\gamma(t))$ in the compactification $\conj{X}$ in \autoref{def:metric_bord}.
The function $h_\gamma$ obtained this way is called a \emph{Busemann function}.
\end{definition}

\subsubsection{$\CAT(0)$ spaces}
\label{sssec:cat0}
Let $(X,d)$ be a geodesic metric space.
It is called a $\CAT(0)$ space if the following holds for all triples of points $x,y,z\in X$.

Connect the points $x,y,z$ by geodesics, and pick two points $p,q$ on distinct geodesics.
In $\bR^2$, there is a unique up to isometry triangle with vertices $x',y', z'$ which has the same side lengths as the triangle formed by $x,y,z$.
There is a unique map preserving lengths between the sides of $x,y,z$ and $x',y',z'$ and let $p',q'$ be the images of $p,q$.
Then we must have
\begin{align*}
 d_X(p,q) \leq d_{\bR^2}(p',q')
\end{align*}
A large class of examples of $ \CAT(0) $-spaces comes from complete, simply-connected manifolds with non-positive sectional curvature.
For instance $ \SL_n(\bR)/\SO_n(\bR) $ satisfies the requirements.

Busemann functions give natural examples of horofunctions, and for $\CAT(0)$ spaces, any boundary point of the metric bordification $\partial X \subset \conj{X}$ comes from a Busemann function $h_\gamma$ for some geodesic ray $\gamma$.
See also \autoref{sssec:boundaries} for more on the equivalence of boundaries for $ \CAT(0) $ spaces.

\subsubsection{Gromov-hyperbolic spaces}
A geodesic metric space $X$ is $\delta$-hyperbolic if its triangles are $\delta$-thin, i.e. for any $x,y,z\in X$ we have 
\begin{align*}
 [x,y]\subset N_\delta \left([x,z]\cup [z,y]\right)
\end{align*}
where $N_\delta(-)$ is the $\delta$-neighborhood of a set.

Unlike the $\CAT(0)$-property which holds at all scales, and in particular locally, $\delta$-hyperbolicity is coarse and does not restrict the local geometry of a space.
The metric (or horocycle) bordification $\partial_{hor} X$ of a $\delta$-hyperbolic space is in general too large.
A more natural boundary is formed by quasi-isometry classes of rays $\partial_{ray}X$ and there is a natural map $\partial_{hor} X \to \partial_{ray} X$.
Moreover, if two different $h,h'\in \partial_{hor}X$ have the same image in the ray compactification, then $h-h'$ is bounded by a constant uniformly on $X$.
Nevertheless, ergodic theorems such as \autoref{thm:NCET} can be developed in this setting.

\subsubsection{Busemann functions for $\SL_n\bR/\SO_n\bR$}

Consider the space $\frakp := \{M\in \Mat_{n\times n}(\bR) \vert M = M^t, \tr M=0\}$.
Recall we also have the symmetric space
\begin{multline*}
 X = \SL_n\bR / \SO_n \bR = \\
 =P_n(\bR):= \{M\in \Mat_{n\times n}(\bR)\vert M = M^t, \det M =1\}
\end{multline*}
The exponential map takes $\frakp$ diffeomorphically to $P_n(\bR)$.
Note that $\SL_n\bR$ acts on $P_n(\bR)$ by $g\cdot M = g M g^t$.

Pick now $\alpha\in \frakp$; after a conjugation by an orthogonal matrix, we can assume that $\alpha$ is diagonal, with eigenvalues occurring with multiplicity $(n_1,\cdots,n_k)$.

Using the partition $(n_1,\cdots,n_k)$ of $n$ define
\begin{align*}
 F(\alpha) & = 
 \begin{bmatrix}
  P_{n_1}(\bR) & 0 &\cdots & 0\\
  0 & P_{n_2}(\bR) & \cdots & 0\\
  \vdots & \vdots & \ddots & \vdots\\
  0 & \cdots & 0 & P_{n_k}(\bR)
 \end{bmatrix}
 && \parbox{1.2in}{ a product of symmetric spaces} \\
 N_\alpha & = 
 \begin{bmatrix}
  \id_{n_1}(\bR) & * &* & *\\
  0 & \id_{n_2}(\bR) & * & *\\
  \vdots & \vdots & \ddots & *\\
  0 & \cdots & 0 & \id_{n_k}(\bR)
 \end{bmatrix} && \parbox{1.2in}{ a unipotent subgroup}
\end{align*}
This gives a set of coordinates via the isomorphism
\begin{align}
 P_n(\bR) \cong N_\alpha \cdot F(\alpha)
\end{align}
The Busemann function corresponding to $\alpha$ is then
\begin{align}
 h_{\exp(\alpha\cdot t)}(n\cdot f) := -\tr (\alpha \cdot \log f)
\end{align}
Note that this function is constant on $N_\alpha$-orbits.

\subsection{Noncommutative Ergodic Theorem}
\label{ssec:ncet}

The next theorem was proved by Karlsson--Ledrappier \cite{Karlsson_Ledrappier}.
\begin{theorem}
\label{thm:NCET}
 Assume that $T\curvearrowright(\Omega,\mu)$ is a probability measure-preserving ergodic system.
 Let $(X,d)$ be a metric space with chosen basepoint $x_0\in X$ and isometry group $\Isom(X,d)$.
 
 Let $\Omega\to \Isom(X,d)$ be a measurable map, and denote by $g_\omega$ the isometry corresponding to $\omega\in \Omega$.
 Assume that we have the $L^1$-bound
 \begin{align*}
  \int_\Omega d(g_\omega x_0,x_0)d\mu(\omega) < \infty
 \end{align*}
 Then we have a linear drift defined by
 \begin{align}
 \label{eqn:noncomm_thm_drift}
  l:=\inf_N \frac 1N \int_{\Omega} d(g_\omega \cdots g_{T^{N-1}\omega} x_0, x_0) d\mu(\omega)
 \end{align}
 In addition, for $\mu$-a.e. $\omega$ we have
 \begin{align}
 \label{eqn:noncomm_thm_local_drift}
  \lim_{N} \frac 1 N d(g_\omega \cdots g_{T^{N-1}\omega} x_0,x_0) = l
 \end{align}
 Moreover, there exists a measurable map
 \begin{align*}
  \Omega & \to \conj{X}\\
  \nonumber \omega &\mapsto h_\omega
 \end{align*}
 such that for $\mu$-a.e. $\omega$ we have
 \begin{align*}
  \lim_{N\to \infty}\frac {-1}N h_\omega(g_\omega \cdots g_{T^{N-1}\omega} x_0) = l
 \end{align*}
 and if $ l>0 $ then $ h_\omega \in \partial X $ for a.e. $ \omega $.
\end{theorem}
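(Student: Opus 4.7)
\emph{Linear drift.} Set $\xi_N(\omega) := g_\omega g_{T\omega} \cdots g_{T^{N-1}\omega} x_0$ and $a_N(\omega) := d(x_0, \xi_N(\omega))$. Because $g_\omega \cdots g_{T^{N-1}\omega}$ is an isometry, the triangle inequality yields the twisted subadditivity
\begin{equation*}
 a_{N+M}(\omega) \leq a_N(\omega) + a_M(T^N\omega).
\end{equation*}
The $L^1$ hypothesis makes $a_1 \in L^1(\Omega,\mu)$, so Kingman's \autoref{thm:Kingman} applies and directly yields both the value of $l$ in \eqref{eqn:noncomm_thm_drift} and the pointwise convergence \eqref{eqn:noncomm_thm_local_drift}.

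\emph{Producing the horofunction.} For the remaining assertions the plan is to upgrade the proof of \autoref{thm:Karlsson_drift} to the cocycle setting. Using the same isometry that produced subadditivity, one has the key identity
\begin{equation*}
 \Phi(\xi_N(\omega))(\xi_k(\omega)) = d(\xi_N(\omega),\xi_k(\omega)) - d(\xi_N(\omega),x_0) = a_{N-k}(T^k\omega) - a_N(\omega).
\end{equation*}
For a.e.\ $\omega$ I would choose a subsequence $N_i = N_i(\omega)$ along which $a_n(\omega) - nl$ behaves like an approximate running maximum, so that the twisted subadditive inequality $a_{N_i}(\omega) \leq a_k(\omega) + a_{N_i - k}(T^k\omega)$ is nearly saturated; this gives
\begin{equation*}
 \liminf_{i\to\infty} \bigl( a_{N_i - k}(T^k\omega) - a_{N_i}(\omega) \bigr) \leq -lk \quad \text{for every } k \geq 0.
\end{equation*}
Since $X$ is proper, $\conj{X}$ is compact (\autoref{def:metric_bord}), so one may refine further so that $\Phi(\xi_{N_i}(\omega)) \to h_\omega \in \conj{X}$; a measurable-selection theorem gives the measurability of $\omega \mapsto h_\omega$. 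Passing to the limit in the displayed identity yields
\begin{equation*}
 h_\omega(\xi_k(\omega)) \leq -lk,
\end{equation*}
while the $1$-Lipschitz property and $h_\omega(x_0)=0$ give the matching lower bound $h_\omega(\xi_k(\omega)) \geq -a_k(\omega)$. Combined with $a_k(\omega)/k \to l$, dividing by $k$ and sending $k \to \infty$ yields the claimed $-h_\omega(\xi_N(\omega))/N \to l$. If $l > 0$ then $h_\omega$ cannot lie in $\Phi(X)$: any $\Phi(y)$ is bounded below by $-d(x_0,y)$, whereas $h_\omega(\xi_N(\omega))$ tends to $-\infty$. Hence $h_\omega \in \partial X$ a.e.

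\emph{Main obstacle.} The delicate ingredient is making the subsequence selection measurable in $\omega$. In \autoref{thm:Karlsson_drift} one uses \autoref{ex:subseq} for a single subadditive sequence; here one faces the cocycle-twisted inequality and a subsequence that genuinely depends on $\omega$. I would resolve this either by invoking a measurable-selection theorem applied to the compact-valued multifunction $\omega \mapsto \{\text{accumulation points of } \Phi(\xi_N(\omega)) \text{ in } \conj{X}\}$, or, more concretely, by an integrated version of the running-maximum argument that averages over $\omega$ and uses Fubini together with the expected-value formula furnished by Kingman's theorem.
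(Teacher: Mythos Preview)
Your treatment of the linear drift via Kingman's theorem is correct and coincides with the paper's.

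For the horofunction, however, you take a different route from the paper, and your route has a real gap. The paper does \emph{not} extend the single-map argument of \autoref{thm:Karlsson_drift} by a pointwise record-time selection. Instead it sets up the skew product $S(\omega,h)=(T\omega,g_\omega^{-1}h)$ on $\Omega\times\conj{X}$, observes that $d(x_0,gx_0)=\max_{h\in\conj{X}}\bigl(-h(g^{-1}x_0)\bigr)$ so that your $a_N(\omega)$ is the fiberwise supremum of a genuine Birkhoff sum $f_N(\omega,h)=-h(\xi_N(\omega))$, and then applies a Krylov--Bogoliubov style ``maximizing measure'' lemma (\autoref{lemma:maximizing_measure}) to produce an $S$-invariant probability $\eta$ on $\Omega\times\conj{X}$ with $\int f_1\,d\eta=l$. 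The map $\omega\mapsto h_\omega$ is then obtained by measurable selection inside the Birkhoff-regular set for $(\eta,f_1)$. No subsequence or record-time argument enters.

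The difficulty in your approach is not, as you diagnose, the measurability of the subsequence; it is the inequality itself. Record times of $n\mapsto a_n(\omega)-nl$ control $a_{N_i-k}(\omega)$ at the \emph{same} base point, whereas your key identity requires control of $a_{N_i-k}(T^k\omega)$ at the \emph{shifted} point. Concretely, a running-maximum time gives $a_{N_i}(\omega)-a_k(\omega)\geq (l-\epsilon)(N_i-k)$, and subadditivity only supplies the \emph{lower} bound $a_{N_i-k}(T^k\omega)\geq a_{N_i}(\omega)-a_k(\omega)$; nothing here yields the needed \emph{upper} bound $a_{N_i-k}(T^k\omega)-a_{N_i}(\omega)\leq -lk$, even as a $\liminf$. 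A correct pointwise argument along these lines does exist (this is the substantive ergodic lemma behind Gou\"ezel--Karlsson), but it is not the running-maximum step you wrote and is considerably more delicate. Neither of your two proposed fixes meets the issue: measurable selection of accumulation points presupposes the inequality you have not yet established, and ``an integrated running-maximum argument'' is precisely what the paper replaces by the maximizing-measure construction.
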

\begin{remark}
 In general the map $\omega \mapsto h_\omega$ need not be equivariant, i.e. $h_{T\omega}$ need not equal $g_\omega \cdot h_\omega$.
 However, in many circumstances, e.g. when $X$ is $\CAT(0)$ or $\delta$-hyperbolic, this equivariance property can be arranged in the appropriate boundary.
\end{remark}

\subsection{Proof of the Noncommutative Ergodic Theorem}
\label{ssec:proof_ncet}

The following lemma is quite general and explains the mechanism in the proof of \autoref{thm:NCET}.

\begin{lemma}
\label{lemma:maximizing_measure}
 \leavevmode
 \begin{enumerate}
  \item Suppose $X$ is a compact metric space equipped with a continuous map $T:X\to X$.
  Let $f:X\to \bR$ be a continuous function and define
  \begin{align*}
   f_n(x) & := f(x) + \cdots + f(T^{n-1}x)
   \intertext{ and the quantity }
   a_n & :=\sup_{x\in X} f_n(x)
  \end{align*}
  Then $a_{n+m}\leq a_n + a_m$ and so $\lim_N \frac 1N a_N=l$ exists.
  Moreover, there exists a $T$-invariant measure $\mu$ on $X$ such that $\int_X f d\mu \geq l$.
  
  \item Suppose $X$ is a compact metric space and $T:(\Omega,\mu)\to (\Omega,\mu)$ is an ergodic probability measure preserving system.
  Let $S:\Omega\times X \to \Omega \times X$ be of the form $S(\omega,x):=(T\omega,g_\omega x)$ where $g_\omega$ is a homeomorphism of $X$ for $\mu$-a.e. $\omega$.
  Let also $f:\Omega\times X \to \bR$ be a function such that $f(\omega,-):X\to \bR$ is continuous for $\mu$-a.e. $\omega$.
  
  Define the Birkhoff averages
  \begin{align*}
   f_n(\omega,x) := f(\omega,x) + f(S(\omega,x)) + \cdots + f(S^{n-1}(\omega,x))
  \end{align*}
  and their fiberwise supremum $F_n(\omega):=\sup_{x\in X} f_n(\omega,x)$.
  
  Then $F_{n+m}(\omega)\leq F_n(\omega) + F_m(T^n\omega)$, so the sequence satisfies the assumptions of the Kingman \autoref{thm:Kingman}.
  Therefore $\frac 1N F_N(\omega)$ tends to a limit $l$ for $\mu$-a.e. $\omega$.
  
  Then there exists an $S$-invariant probability measure $\eta$ on $\Omega\times X$, with projection to $\eta$ equal to $\mu$, and such that 
  \begin{align*}
   \int_{\Omega\times X} F d\eta \geq l
  \end{align*}
 \end{enumerate}
\end{lemma}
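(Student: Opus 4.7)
For part (i), subadditivity $a_{n+m} \leq a_n + a_m$ is immediate from $f_{n+m}(x) = f_n(x) + f_m(T^n x) \leq a_n + a_m$, and Fekete's lemma (recalled in \autoref{sssec:linear_drift}) then gives the limit $l := \lim_N \frac{1}{N} a_N$. For the invariant measure, the plan is the classical empirical measure construction: pick near-maximizers $x_n \in X$ with $f_n(x_n) \geq a_n - 1$, form $\mu_n := \frac{1}{n} \sum_{i=0}^{n-1} \delta_{T^i x_n}$ on the compact space $X$, and extract a weak-$*$ subsequential limit $\mu$. The telescoping estimate $|\int g\circ T\, d\mu_n - \int g\, d\mu_n| \leq 2\|g\|_\infty/n$ for continuous $g$ yields $T$-invariance of $\mu$, while $\int f\, d\mu_n = f_n(x_n)/n \to l$ together with continuity of $f$ forces $\int f\, d\mu \geq l$.

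For part (ii), the key observation is that each $g_\omega$, being a homeomorphism, is a bijection of $X$. Writing $f_{n+m}(\omega, x) = f_n(\omega, x) + f_m(T^n\omega,\, g_\omega \cdots g_{T^{n-1}\omega} x)$ and noting that as $x$ ranges over $X$ so does $g_\omega \cdots g_{T^{n-1}\omega} x$, taking $\sup_x$ on both sides gives
\[ F_{n+m}(\omega) \leq F_n(\omega) + F_m(T^n\omega), \]
which is exactly the cocycle subadditivity required by Kingman's \autoref{thm:Kingman}. Integrating and applying Fekete to $\frac{1}{N}\int F_N\, d\mu$ identifies the a.e.\ limit with $\inf_N \frac{1}{N}\int F_N\, d\mu$.

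To build $\eta$, I would mimic part (i) in the fibered setting. A measurable selection theorem (Kuratowski--Ryll-Nardzewski applies since $X$ is compact and $f_n(\omega, \cdot)$ is fiberwise continuous) yields $x_n(\omega) \in X$ with $f_n(\omega, x_n(\omega)) \geq F_n(\omega) - \tfrac{1}{n}$. Define
\[ \int g\, d\eta_n := \frac{1}{n} \int_\Omega \sum_{i=0}^{n-1} g\bigl(S^i(\omega, x_n(\omega))\bigr)\, d\mu(\omega) \]
on $\Omega \times X$. Testing against $g(\omega, x) = g_0(\omega)$ independent of $x$ and using $T$-invariance of $\mu$ shows the $\Omega$-projection of $\eta_n$ equals $\mu$. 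By the in-families weak-$*$ compactness (Krylov--Bogoliubov in the exercise after \eqref{eqn:prob_msr_proj_bdl}), extract a subsequential limit $\eta$, still projecting to $\mu$. The asymptotic $S$-invariance bound $|\int g\circ S\, d\eta_n - \int g\, d\eta_n| \leq 2\|g\|_\infty/n$ transfers to honest $S$-invariance of $\eta$. Finally $\int f\, d\eta_n = \tfrac{1}{n}\int_\Omega f_n(\omega, x_n(\omega))\, d\mu \geq \tfrac{1}{n}\int_\Omega F_n\, d\mu - \tfrac{1}{n} \to l$, so passing to the limit gives $\int f\, d\eta \geq l$.

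The main obstacle is that $f$ is only fiberwise continuous, not jointly continuous on $\Omega \times X$, so the weak-$*$ convergence $\int f\, d\eta_n \to \int f\, d\eta$ must be justified against this weaker regularity. This is precisely handled by the topology on measures with prescribed $\Omega$-projection constructed in \cite[\S~4.2.3]{Viana}, which tests against exactly such fiberwise-continuous functions. Combined with an integrability hypothesis on $\sup_x |f(\omega, x)|$ (which will hold in the intended application to \autoref{thm:NCET}, where $f$ is built from horofunctions controlled by the $L^1$ displacement $d(g_\omega x_0, x_0)$), this pushes the limit through. The remaining routine checks are measurability of the selection $x_n(\omega)$ and of the map $\omega \mapsto F_n(\omega)$.
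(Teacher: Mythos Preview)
Your proposal is correct and follows essentially the same route as the paper: empirical measures along orbits of (near-)maximizers, weak-$*$ extraction, and a Krylov--Bogoliubov argument for invariance, with the fibered version handled via measurable selection and the fiberwise-continuous weak-$*$ topology of \cite[\S~4.2.3]{Viana}. The only cosmetic difference is that the paper takes exact maximizers $x_n$ (which exist by compactness and continuity of $f_n$) rather than your $\epsilon$-approximate ones, and it defers the measurable-selection details to the discussion in \autoref{sssec:measurable_selection}.
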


The topic of ``ergodic optimization'' is concerned with results similar to the one above; a more general statement, in which $ F_n(\omega) $ are a subadditive sequence of upper semi-continuous functions, can be found in the paper of Morris \cite[Thm. A.3]{Morris}.

\begin{proof}
 The proof of (ii) is similar to that of (i), so we focus on the latter.
 
 Since $X$ is compact, the continuous function $f_n$ achieves the supremum at some point $x_n$.
 Define a probability measure using Dirac delta-functions on the trajectory of $x_n$ by
 \begin{align*}
  \mu_n := \frac 1 n \left( \delta_{x_n} + \cdots + \delta_{T^{n-1}x_n} \right)
 \end{align*}
 Then by construction $\int f d\mu_n = \frac 1n \sup_x f_n(x)=\frac 1n a_n\geq l$.
 
 Let $\mu$ be a weak-* limit of the $\mu_n$.
 Then $\mu$ is $T$-invariant (by a Krylov--Bogoliubov type argument) and satisfies $\int f d\mu \geq l$ by construction.
 
 For the proof of (ii), one needs to select maximizers of $f_n$ in each fiber of $\Omega\times X \to \Omega$ (see also the end of proof of \autoref{thm:NCET}).
 Namely, one constructs a map $\sigma_n:\Omega \to X$ such that $F_n(\omega)=f_n(\sigma_n(\omega))$.
 Then the measure $\eta_n$ is defined as the average along an $S$-orbit of length $n$ of the measure $(\sigma_n)_*\mu$ and the proof proceeds as before.
\end{proof}

\subsubsection{Measurable selection theorems}
\label{sssec:measurable_selection}
In the above proof, one needs to construct measurable sections of the projection map $ \Omega\times X \to \Omega $ with image in the maximizing set of the functions $ f_n $.
While it is not true that any measurable surjective map $ p:A\to B $ between two Borel spaces has a Borel section, the following variants do hold and either one suffices for the current applications:
\begin{itemize}
	\item[(i)] {\cite[Thm. 6.9.6, Vol. 2]{Bogachev}, \cite[Thm. 35.46]{Kechris}} Let $ X,Y $ be Polish spaces and $ \Gamma\subset X\times Y $ a Borel set such that for all $ x\in X $ the fibers $ \Gamma_x:=\{y\in Y\colon (x,y)\in \Gamma\} $ are $ \sigma $-compact.
	Then $ \Gamma $ contains the graph of a Borel mapping $ f:X\to Y $.
	\item[(ii)] \cite[Thm 3.4.1]{Arveson} Let $ p:P\to \Omega $ be a Borel map from a Polish space $ P $ to a Borel space $ \Omega $.
	Assume that $ p $ maps open sets to Borel sets, and the preimage of any point in $ \Omega $ is a closed set in $ P $.
	Then $ p $ has a Borel section.
\end{itemize}

\subsubsection{Translation distance and horocycles}
For an isometry $g$ of $X$, the distance by which it moves $x_0$ can be measured by
\begin{align}
\label{eqn:dist_via_horofctn}
 d(x_0, gx_0) = \max_{h\in \conj{X}} -h(g^{-1}x_0)
\end{align}
Indeed, assume that $h$ comes from a point $z\in X$.
Then
\begin{align*}
 -h_z(g^{-1}x_0) &= d(z,x_0) - d(z,g^{-1}x_0) \\
 \nonumber & \leq d(x_0,g^{-1}x_0)=d(x_0,gx_0)
\end{align*}
This inequality persists in the closure $\conj{X}$ of $X$.
Taking $z=g^{-1}x_0$ achieves equality.

\subsubsection{A cocycle function.}
Define therefore the function
\begin{align*}
 F: &\Isom(X)\times \conj{X} \to \bR\\
 \nonumber &F(g,h):= - h(g^{-1}x_0)
\end{align*}
If the function $h$ comes from a point $z\in X$, i.e. $h=h_z$, then the following cocycle property holds:
\begin{align*}
 F(g_1 g_2,h_z) &= -\left( d(z,g_2^{-1}g_1^{-1}x_0) - d(z,x_0) \right)\\
 \nonumber &= -\left( d(g_2z,g_1^{-1}x_0) - d(g_2 z, x_0) + d(g_2 z, x_0) - d(z, x_0) \right)\\
 \nonumber &= F(g_1,g_2 h_z) + F(g_2, h_z)
\end{align*}
By continuity of the action of isometries, this property extends to all $h\in \conj{X}$:
\begin{align*}
 F(g_1g_2,h) = F(g_1, g_2 h) + F(g_2,h)
\end{align*}

\subsubsection{The total space of the cocycle.}
Consider the space $\Omega\times \conj{X}\to \Omega$.
Extend the action of $T$ on $\Omega$ to a map $S$ on $\Omega\times \conj{X}$ by $S(\omega,h):=(T\omega,g^{-1}_\omega h)$.
Define now the function
\begin{align*}
 f_1 : \Omega\times \conj{X}& \to \bR\\
 \nonumber f_1(\omega,h)&:= F(g_\omega^{-1},h)
\end{align*}
The Birkhoff sums for $f_1$ and the transformation $S$, using the cocycle property of $F$, can be expressed as:
\begin{align*}
 f_n(\omega,h) & := \sum_{i=0}^{n-1} f_1(S^i (\omega,h))\\
 \nonumber & = F(g_\omega^{-1},h) + F(g^{-1}_{T\omega},g_{\omega}^{-1}h) + \cdots + F(g^{-1}_{T^{n-1}\omega}, g^{-1}_{T^{n-2}\omega}\cdots g^{-1}_\omega h )\\
 \nonumber &= F(g^{-1}_{T^{n-1}\omega}\cdots g^{-1}_\omega ,h)
\end{align*}
By the definition of $F$, this gives
\begin{align*}
 f_n(\omega,h) = -h(g_\omega\cdots g_{T^{n-1}\omega} x_0)
\end{align*}

\subsubsection{Existence of the drift.}
Define the functions
\begin{align*}
 F_n(\omega) := d(g_{T\omega} \cdots g_{T^n\omega} x_0, x_0)
\end{align*}
Then by the triangle inequality $F_{n+m}(\omega) \leq F_n(\omega) + F_m(T^n\omega) $.
Therefore the drift $l$ defined in \eqref{eqn:noncomm_thm_drift} exists by subadditivity of the integrals of $F_n$.
The limit \eqref{eqn:noncomm_thm_local_drift} exists $\mu$-a.e. and equals $l$ by the Kingman ergodic theorem.

\subsubsection{Existence of a maximizing measure.}
Note that by \autoref{eqn:dist_via_horofctn} the functions $F_n$ also satisfy
\begin{align*}
 F_n(\omega) = \max_{x\in\conj{X}} f_n(\omega,x)
\end{align*}
Thus, by \autoref{lemma:maximizing_measure} there exists a measure $\eta$ on $\Omega\times \conj{X}$ such that
\begin{align*}
 \int_{\Omega\times \conj{X}} f_1d\eta \geq l
\end{align*}
Note however than since $F_n(\omega)\geq \sup_{x} f_n(\omega,x)$, it follows that for any $T$-invariant measure the reverse inequality also holds:
\begin{align*}
 \int_{\Omega\times \conj{X}} f_1 d\eta \leq l
\end{align*}

\subsubsection{Existence of $h_\omega$.}
Let now $M$ be the set of all $(\omega,x)$ for which the Birkhoff theorem holds for the measure $\eta$ and the function $f_1$.
Then $M$ is of $\eta$-full measure, so over $\mu$-a.e. $\omega$ the set $M$ is not empty.
Using a measurable selection theorem (see \autoref{sssec:measurable_selection}) there is a map $\omega\mapsto h_\omega\in M$ and by construction it satisfies the required properties. \hfill \qed

\subsection{An example}
\label{ssec:example}

This discussion follows an unpublished note by Karlsson--Monod (see \cite{Karlsson_notes}).
See also the book of Aaronson \cite[Prop. 2.3.1]{Aaronson}.

Consider a function $D:\bR_{\geq 0}\to \bR_{\geq 0}$ which is
\begin{itemize}
 \item Increasing, and $D(0)=0$
 \item $D(t)\to \infty$ as $t\to \infty$
 \item $\frac {D(t)}{t} \to 0$ monotonically as $t\to \infty$
\end{itemize}
This implies that
\begin{align*}
 \frac 1 {t+s} D(t+s) \leq \frac 1t D(t)
\end{align*}
Assuming that $t\leq s$ this gives
\begin{align*}
 D(t+s)\leq D(t) + \frac s t D(t) = D(t) + \frac {D(t)/t}{D(s)/s}D(s)\leq D(t) + D(s)
\end{align*}
Therefore $\bR$ equipped with the distance function $ \dist(x,y):=D(|x-y|) $ is a proper metric space.
One can check that the only point in the metric bordification is in this case the zero function, i.e. $\conj{X}=X\cup \{h\equiv 0\}$.
The isometry group is still $\bR$, acting by translations.
So a function $f:\Omega\to \bR$ can be viewed as a map to the isometry group.

\begin{corollary}
 Assume that $T:(\Omega,\mu)\to (\Omega,\mu)$ is a probability measure-preserving transformation, and $f:\Omega\to \bR$ is $D$-integrable, i.e.
 \begin{align*}
  \int_{\Omega} D(|f|) d\mu < \infty
 \end{align*}
 Then we have
 \begin{align*}
  \lim \frac 1N D\left(\vert f(\omega) + \cdots + f(T^{N-1}\omega) \vert\right) = 0
 \end{align*}
\end{corollary}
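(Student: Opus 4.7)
The plan is to apply the Noncommutative Ergodic Theorem (\autoref{thm:NCET}) to the proper metric space $X = \bR$ equipped with the distance $\dist(x,y) = D(|x-y|)$ constructed just above the corollary, taking $x_0 = 0$ as the basepoint and letting $g_\omega$ be translation by $f(\omega)$. Since translations are isometries of this metric (translation invariance of $|x-y|$ is obvious), the hypothesis requires
\[
\int_\Omega \dist(g_\omega x_0, x_0)\, d\mu(\omega) = \int_\Omega D(|f(\omega)|)\, d\mu(\omega) < \infty,
\]
which is exactly the $D$-integrability assumption. Moreover, because the group law is just addition, $g_\omega \cdots g_{T^{N-1}\omega} x_0 = S_N(\omega) := f(\omega) + \cdots + f(T^{N-1}\omega)$, so \eqref{eqn:noncomm_thm_local_drift} asserts that $\frac{1}{N} D(|S_N(\omega)|) \to l$ for $\mu$-a.e.\ $\omega$ (assuming $T$ is ergodic, one can pass to the general case via the ergodic decomposition).

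It therefore suffices to prove that the drift $l$ is zero. The key geometric input, noted in the example, is that the metric bordification satisfies $\overline{X} = X \cup \{h \equiv 0\}$, so $\partial X$ consists of a single point, namely the zero horofunction. Now invoke the last clause of \autoref{thm:NCET}: if $l > 0$, then for $\mu$-a.e.\ $\omega$ one has $h_\omega \in \partial X$, hence $h_\omega \equiv 0$. But then
\[
\lim_{N\to\infty} \frac{-1}{N}\, h_\omega\bigl(g_\omega \cdots g_{T^{N-1}\omega} x_0\bigr) = 0,
\]
which by the theorem equals $l$, contradicting $l > 0$. Hence $l = 0$, and substituting this into \eqref{eqn:noncomm_thm_local_drift} yields the conclusion of the corollary.

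The main obstacle in carrying this out is verifying that $\partial X = \{h \equiv 0\}$, which is the point asserted (without proof) in the example preceding the corollary. This amounts to showing that for any sequence $x_n \to \pm\infty$, the functions $h_{x_n}(y) = D(|x_n - y|) - D(|x_n|)$ tend to zero uniformly on compact subsets of $\bR$. This follows from the hypothesis $D(t)/t \to 0$ monotonically: subadditivity gives $|h_{x_n}(y)| \leq D(|y|)$ and the monotone decay of $D(t)/t$ together with $D(t) = o(t)$ forces $D(x_n - y) - D(x_n) \to 0$ for each fixed $y$. Once this bordification computation is in hand, the rest of the argument is a direct translation of \autoref{thm:NCET} into the additive setting.
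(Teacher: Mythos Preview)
Your proposal is correct and follows exactly the approach the paper intends: the paper sets up the metric space $(\bR, D(|\cdot|))$, asserts that $\partial X = \{h\equiv 0\}$, identifies the isometry group with $\bR$, and then states the corollary without further argument, leaving the reader to plug into \autoref{thm:NCET} precisely as you have done. Your observation that the ergodicity hypothesis of \autoref{thm:NCET} must be supplied (and can be removed by ergodic decomposition) is a useful addition, and your sketch for $\partial X=\{0\}$ via $D(t+c)-D(t)\leq \tfrac{c}{t}D(t)\to 0$ (from monotonicity of $D(t)/t$) is the right computation.
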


\begin{example}[Marcinkiewic--Zygmund]
 Take $D(t):=t^p$ with $0<p<1$.
 Assume that $f\in L^p(\Omega,\mu)$ (note that typically $f\notin L^1(\Omega,\mu)$).
 Then it follows that
 \begin{align*}
  \frac 1 {N} \left( f(\omega) + \cdots + f(T^{N-1}\omega) \right)^p &\to 0
 \intertext{or equivalently} 
  \frac 1 {N^{1/p}} \left( f(\omega) + \cdots + f(T^{N-1}\omega) \right) &\to 0
 \end{align*}	
\end{example}

\section{Mean versions of the Multiplicative Ergodic Theorem}
\label{sec:mean_MET}

This section contains an $L^2$-analogue of the Multiplicative Ergodic Theorem.
The geometry of non-positively curved spaces allows a streamlined presentation of the results.
The setting is that of $\CAT(0)$-spaces, but the results hold in the more general case of metric spaces which are uniformly convex and non-positively curved in the sense of Busemann (see e.g. \cite{Karlsson_Margulis}).

The definitions and basic properties of $\CAT(0)$-spaces are discussed in \autoref{ssec:cat0_spaces}; a more comprehensive treatment can be found in the monograph of Bridson--Haefliger \cite{Bridson_Haefliger}.
Direct integrals of metric spaces are defined in \autoref{ssec:direct_integrals}, following Monod \cite{Monod}.
These are ``big'' metric spaces which inherit nice curvature properties of their constituents.
Using the developed formalism, a Mean Multiplicative Ergodic Theorem is proved in \autoref{ssec:mMET}.
We end in \autoref{ssec:mean_kingman} with a mean form of Kingman's subadditive ergodic theorem.

Throughout this section, $X$ will be a metric space equipped with a distance $\dist(-,-)$.
The space will be assumed complete, but not necessarily proper (i.e. locally compact).

\subsection{\texorpdfstring{$\CAT(0)$}{CAT(0)} spaces}
\label{ssec:cat0_spaces}

Below we recall two more equivalent definitions of $\CAT(0)$ spaces.
Depending on the context, one is easier to use than the other.

\subsubsection{Definition via comparison triangles}
\label{def:cat0_comparison}
One definition already appears in \autoref{sssec:cat0} and says that triangles are thinner than their comparison analogues in Euclidean space.
A variant (which can be easier to check) is as follows.
Let $ [y,z] $ be a geodesic segment and $ m\in [y,z] $ its midpoint.
For any other point $ x\in X $ there exist points in Euclidean space $ x',y',z' $ such that the pairwise distances between $ x,y,z $ and $ x',y',z' $ coincide; furthermore let $ m'\in [y',z'] $ be the corresponding midpoint.
Then the (complete) metric space $ X $ is $ \CAT(0) $ if $ \dist (x,m)\leq \dist (x',m') $.

\subsubsection{Definition via the parallelogram law}
\label{def:cat0_parallelogram}
The space $X$ is $\CAT(0)$ if for any $y_1,y_2\in X$ there exists a point $m\in X$ such that the following holds $\forall x\in X$:
\begin{align*}
 \dist(x,m)^2 \leq \frac 12 \left( \dist(x,y_1)^2 + \dist(x,y_2)^2 \right) - \frac 14 \dist(y_1,y_2)^2
\end{align*}
By appropriate choices of $x$, it follows that $m$ is the \emph{unique} midpoint of $[y_1,y_2]$.
Note that if in a complete metric space any two points have a unique midpoint, then the space is geodesic, and moreover geodesics are unique.

\subsubsection{Convexity}
\label{sssec:convexity}
The geometry of a $\CAT(0)$ space enjoys a number of convexity properties, some of which are summarized below.
A set $C\subset X$ is \emph{convex} if it contains the geodesic between any two of its points.

\begin{enumerate}
 \item For any point $x$ and geodesic $\gamma$ in $X$, the distance function $\dist(x,\gamma(t))$ is convex.
 This follows after some algebraic manipulations from \autoref{def:cat0_parallelogram}.
 \item For two geodesic rays $\gamma_1,\gamma_2$ the function $\dist(\gamma_1(a_1 t),\gamma_2(a_2 t))$ is convex in $t$, for any $ a_1,a_2 \geq 0 $.
 \item For two geodesic rays  $\gamma_1,\gamma_2$ that start at the same point, the function $\frac 1t\dist(\gamma_1(t),\gamma_2(t))$ is semi-increasing.
 \item The distance function to a convex set (defined on all of $X$) is itself a convex function.
 \item For a convex subset $C$ the nearest point projection map $p:X\to C$ is well-defined and distance-decreasing.
\end{enumerate}

\subsubsection{Uniform convexity}
A metric space $X$ admitting midpoints is \emph{uniformly convex} if there exists a strictly decreasing continuous function $g$ on $[0,1]$ with $g(0)=1$ satisfying:
For all $x,y_1,y_2\in X$, with $m$ the midpoint of $[y_1,y_2]$ and $R:=\max(\dist(x,y_1),\dist(x,y_2))$ we have
\begin{align*}
 \frac{\dist(x,m)}{R} \leq g\left(\frac{\dist(y_1,y_2)}{R} \right)
\end{align*}
The function $g$ is called the modulus of convexity.

For $\CAT(0)$-spaces, the function $g(\epsilon):=(1-\frac 14\epsilon^2)^{\frac 12}$ works.
This follows directly from the definition in \autoref{def:cat0_parallelogram}.
We will need the following result, reproduced from \cite[Lemma 3.1]{Karlsson_Margulis}, which is valid for all uniformly convex metric spaces.

\begin{lemma}
\label{lemma:reverse_triangle}
 Let $x,y,z\in X$ be points satisfying an almost reverse triangle inequality:
 \begin{align*}
  (1-\epsilon) \dist(x,y) + \dist(y,z) \leq \dist(x,z)
 \end{align*}
 for some $\epsilon>0$.
 Let $y'$ be the point on the geodesic $[x,z]$ such that $\dist(x,y')=\dist(x,y)$.
 Then there exists $f(\epsilon)>0$ with $f(\epsilon)\to 0$ as $\epsilon \to 0$ such that
 \begin{align*}
  \dist(y,y') \leq f(\epsilon) \dist(x,y)
 \end{align*}
\end{lemma}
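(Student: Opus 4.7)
The plan is to combine the convexity of the distance-from-a-point function along geodesics (which holds in any space non-positively curved in the sense of Busemann, in particular in $\CAT(0)$ spaces) with the uniform convexity of $X$. Introduce the shorthand $a := \dist(x,y) = \dist(x,y')$, $b := \dist(y,z)$, $c := \dist(x,z)$, so that $\dist(y',z) = c - a$ (since $y'$ lies on $[x,z]$) and the hypothesis reads $b \leq (c-a) + \epsilon a$. The quantity I want to control is $\dist(y,y')$, and both $y$ and $y'$ are at the same distance $a$ from $x$, which is exactly the situation where uniform convexity can be deployed.

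First, let $m$ be the midpoint of the geodesic $[y,y']$. Applying convexity of the function $t \mapsto \dist(z,\gamma(t))$ along this geodesic gives
\[
\dist(z,m) \;\leq\; \tfrac{1}{2}\bigl(\dist(z,y) + \dist(z,y')\bigr) \;=\; \tfrac{1}{2}\bigl(b + (c-a)\bigr) \;\leq\; (c-a) + \tfrac{\epsilon a}{2}.
\]
Then by the ordinary triangle inequality
\[
\dist(x,m) \;\geq\; \dist(x,z) - \dist(m,z) \;\geq\; c - (c-a) - \tfrac{\epsilon a}{2} \;=\; a\bigl(1 - \tfrac{\epsilon}{2}\bigr).
\]
So from the $z$-side, $m$ is forced to stay close to the geodesic ray towards $x$.

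Next, apply uniform convexity with center $x$ and endpoints $y_1 = y$, $y_2 = y'$; since $\dist(x,y) = \dist(x,y') = a$, we have $R = a$ and
\[
\frac{\dist(x,m)}{a} \;\leq\; g\!\left(\frac{\dist(y,y')}{a}\right).
\]
Combining with the lower bound from the previous step yields
\[
g\!\left(\frac{\dist(y,y')}{a}\right) \;\geq\; 1 - \tfrac{\epsilon}{2}.
\]
Since $g$ is strictly decreasing and continuous with $g(0) = 1$, the function $f(\epsilon) := g^{-1}\!\bigl(1 - \tfrac{\epsilon}{2}\bigr)$ is well-defined for small $\epsilon > 0$ and satisfies $f(\epsilon) \to 0$ as $\epsilon \to 0$. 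Inverting the inequality gives $\dist(y,y') \leq f(\epsilon)\,a = f(\epsilon)\dist(x,y)$, as required.

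There is no real obstacle: the only subtlety is making sure that convexity of the function $t \mapsto \dist(z,\gamma(t))$ is available in the generality stated (uniformly convex plus Busemann non-positively curved), but this follows from Busemann's definition applied with one of the two geodesics taken to be constant at $z$. The rest is a two-line manipulation of the modulus of convexity.
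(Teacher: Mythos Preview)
Your proof is correct and follows essentially the same route as the paper: bound $\dist(z,m)$, use the triangle inequality to get a lower bound on $\dist(x,m)$, then apply uniform convexity centered at $x$ with $R=\dist(x,y)=\dist(x,y')$ and invert $g$. The one difference is in the first step: you bound $\dist(z,m)$ by the \emph{average} $\tfrac12(\dist(z,y)+\dist(z,y'))$ via Busemann convexity, whereas the paper bounds it by the \emph{maximum} $\max(\dist(z,y),\dist(z,y'))$ using only uniform convexity (since $g\leq 1$). Your version yields a slightly sharper constant ($1-\epsilon/2$ in place of $1-\epsilon$), but the paper's argument has the advantage of going through in a purely uniformly convex space, without invoking Busemann non-positive curvature at all.
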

\begin{proof}
 From the definition of $y'$ and the assumption of the lemma, it follows that
 \begin{align*}
  \max\left( \dist(z,y), \dist(z,y') \right) \leq \dist(x,z) - (1-\epsilon)\dist(x,y)
 \end{align*}
 Setting $m$ to be the midpoint of $[y,y']$, the uniform convexity of $X$ gives
 \begin{align*}
  \dist(z,m) \leq \max\left( \dist(z,y), \dist(z,y') \right)
 \end{align*}
 The triangle inequality gives
 \begin{align*}
  \dist(x,m) &\geq \dist(x,z) - \dist(z,m)
  \intertext{which combined with the last two inequalities gives}
  \dist(x,m) &\geq (1-\epsilon) \dist(x,y)
 \end{align*}
 Using this inequality and setting $R=\dist(x,y)=\dist(x,y')$, the definition of uniform convexity gives
 \begin{align*}
  1-\epsilon \leq \frac{\dist(x,m)}{R} \leq g\left( \frac{\dist(y,y')}{2R} \right)
 \end{align*}
 Since $g$ is continuous and strictly decreasing with $g(0)=1$, it follows that $\frac{\dist(y,y')}{2R}\leq f(\epsilon)$ with $f(\epsilon)\to 0$ as $\epsilon \to 0$.
 This is the required conclusion.
\end{proof}

\subsubsection{Boundaries}
\label{sssec:boundaries}
Starting from a $\CAT(0)$-space $X$, one can associate in a natural way a boundary $\partial X$ to it.
When $X$ is locally compact, the union equipped with a natural topology $\conj{X}=X\coprod \partial X$ is compact.
Two constructions are possible: one using equivalence classes of geodesics, and another using horofunctions.
For $\CAT(0)$-spaces, these give naturally isomorphic boundaries.

\subsubsection{Geodesic, or visual, boundary}
Define $\partial_{geod} X$ to consist of equivalence classes of geodesic rays $\gamma:[0,\infty)\to X$, where two geodesics are equivalent if they are a bounded distance away from each other.
Alternatively, fix a basepoint $x_0\in X$ and define the boundary to equal all geodesic rays starting at $x_0$.
Each geodesic ray is equivalent to a unique one starting at $x_0$, so the boundary is independent of the choice of basepoint.

\subsubsection{Horofunction boundary}
The space $X$ naturally embeds in the space of $1$-Lipschitz function, and the horofunction, or metric bordification of $X$ is described in \autoref{def:metric_bord}.
The equivalence between the geodesic and horofunction boundaries is given by Busemann functions, \autoref{def:buseman_fctn}.

\subsection{Direct integrals of \texorpdfstring{$\CAT(0)$}{CAT(0)} spaces}
\label{ssec:direct_integrals}

The presentation in this section follows the article of Monod \cite{Monod}.
The concept of direct integral of $\CAT(0)$-spaces is developed there and further used to prove superrigidity-type statements for product group actions.

A slightly more general setting would be that of uniformly convex, non-positively curved metric spaces in the sense of Busemann.
Then, one could take $L^p$-norms (with $1<p<\infty$) in \autoref{def:direct_integral} of direct integrals of metric spaces.

\subsubsection{Setup}
Let $(\Omega,\mu)$ be a Borel measure space, and let $\crX \xrightarrow{p} \Omega$ be a bundle of $\CAT(0)$-spaces.
For instance, the bundle could be trivial: $\crX=X\times \Omega$ where $X$ is a $\CAT(0)$-space.
A \emph{basepoint} for the bundle is a section $\sigma_0:\Omega\to \crX$ such that $p\circ \sigma_0 = \id_\Omega$.
Denote by $L^2(\Omega,\mu)$ the space of real valued functions on $\Omega$ which have finite $L^2$-norm for the measure $\mu$ (up to $\mu$-a.e. equivalence).

\begin{definition}
\label{def:direct_integral}
 The \emph{direct integral} of metric spaces is the set of measurable sections whose distance to $\sigma_0$ is in $L^2(\Omega,\mu)$:
 \begin{align*}
  L^2(\crX,\mu):=\lbrace \sigma:\Omega\to \crX \mid \dist(\sigma_0,\sigma)\in L^2(\Omega,\mu) \rbrace.
 \end{align*}
 and two sections are identified if they agree $\mu$-a.e.
 The distance between two sections is defined by
 \begin{align}
 \label{def:l2_section_dist}
  \dist(\sigma_1,\sigma_2)^2 := \int_\Omega \dist(\sigma_0(\omega), \sigma(\omega))^2 \, d\mu(\omega) <\infty
 \end{align}
 The integral defining the distance is finite by pointwise comparison: two sections at finite $L^2$-distance from $\sigma_0$ are at finite $L^2$-distance from each other.
\end{definition}

\subsubsection{Properties}
Checking the definitions one finds that:
\begin{enumerate}
 \item The metric space $L^2(\crX,\mu)$ is complete and separable, if $\mu$-a.e. fiber of $\crX$ is.
 \item The direct integral of $\CAT(0)$-spaces is itself $\CAT(0)$.
 This is immediate from the definition via the parallelogram law \autoref{def:cat0_parallelogram}.
 \item The midpoint $\sigma_m$ of two sections $\sigma_x,\sigma_y$ is given by the pointwise midpoint in each fiber.
\end{enumerate}

\subsubsection{Geodesics}
\label{sssec:geodesics}
A geodesic $\gamma:[0,r]\to L^2(\crX,\mu)$ is equivalent to the data of
\begin{enumerate}
 \item A function $\alpha\in L^2(\Omega,\mu)$ satisfying $\int_\Omega \alpha^2 \, d\mu=1$ (called a \emph{semi-density}).
 \label{def:semidensity}
 \item A (measurable) collection of geodesics $\gamma_\omega:[0,r\cdot \alpha(\omega)] \to \crX_\omega$ satisfying
 \begin{align*}
 \gamma(t)(\omega) = \gamma_\omega( t \cdot \alpha(\omega))
 \end{align*}
\end{enumerate}
A verification of this description is available in \cite[Prop. 44]{Monod}.

\subsubsection{Boundaries}
The boundary of $L^2(\crX,\mu)$ is a join integral:
\begin{align*}
 \partial L^2(\crX,\mu) := \int_\Omega^* \left(\partial \crX_\omega\right) \, d\mu(\omega)
\end{align*}
where an element of the right-hand side is the data $(\phi,\alpha)$ of a measurable section $\phi(\omega)\in \partial \crX_\omega$ and a semi-density $\alpha$ (see \autoref{sssec:geodesics}\ref{def:semidensity}).
This description is detailed in \cite[Rmk. 48]{Monod}.

\subsubsection{Induced actions}
\label{sssec:induced_actions}
Suppose now that the bundle $\crX\to \Omega$ is equipped with an action $T$ by fiberwise isometries.
This gives a transformation $T\curvearrowright\Omega$ and isometries $T_\omega : \crX_\omega \to \crX_{T\omega}$.

The action of $T$ then naturally extends to sections of the bundle by the formula
\begin{align}
\label{eqn:section_pull_back}
 (T^*\sigma)(\omega) := T_\omega^{-1}\left( \sigma(T\omega) \right).
\end{align}
Assume that the action satisfies the $L^2$-integrability condition $\dist(\sigma_0, T^*\sigma_0)<\infty$.
Then we have an action by pullback on the entire space $L^2(\crX,\mu)$.

\begin{remark}
 Inspecting the definition of pullback of sections in \autoref{eqn:section_pull_back} it is more natural to define the induced action on the metric space bundle $\crX$ by
 \begin{align*}
  T_\omega: \crX_{T\omega} \to \crX_\omega
 \end{align*}
 Indeed, this would be more compatible with the previous sections, where $\crX$ is the bundle of fiberwise metrics on a vector bundle.
 Given a linear map between vector spaces, the induced map on the space of metrics is by pullback, and goes the other way.
\end{remark}

\subsection{Mean Multiplicative Ergodic Theorem}
\label{ssec:mMET}

The construction of direct integrals of $\CAT(0)$ spaces described in the previous section allows for a rather direct proof of a mean form of the Multiplicative Ergodic Theorem.
The discussion below follows Karlsson \& Margulis \cite{Karlsson_Margulis} and illustrates their proof in the setting of a single semi-contraction of a $\CAT(0)$-space.

\begin{theorem}
\label{thm:cat0_isometry}
 Let $X$ be a complete $\CAT(0)$ space and $T:X\to X$ be a semi-contraction (i.e. $\dist(Tx,Ty)\leq \dist(x,y)\quad \forall x,y\in X$).
 
 Then there exists a number $A\geq 0$, and a geodesic ray $\gamma$, such that for any starting point $x_0\in X$ we have that
 \begin{align*}
  \frac 1 n \dist\left( T^n x_0, \gamma(A\cdot n) \right) \xrightarrow{n\to +\infty} 0
 \end{align*}
\end{theorem}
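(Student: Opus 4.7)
The plan is to follow Karlsson--Margulis, avoiding horofunctions (since $X$ need not be proper) and extracting a geodesic ray directly from the segments $[x_0, T^m x_0]$ by means of the reverse triangle inequality \autoref{lemma:reverse_triangle}. Set $a_n := \dist(x_0, T^n x_0)$. The semi-contraction property gives $a_{n+m} \leq a_n + \dist(T^n x_0, T^{n+m}x_0) \leq a_n + a_m$, so by Fekete's lemma (\autoref{sssec:linear_drift}) the limit $A := \lim_n a_n/n = \inf_n a_n/n$ exists and $a_n \geq An$ for every $n$. If $A = 0$ then $a_n = o(n)$ and the conclusion is immediate for any geodesic ray from $x_0$. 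The conclusion for an arbitrary starting point $y_0$ follows from the one for $x_0$ because $\dist(T^n y_0, T^n x_0) \leq \dist(y_0, x_0)$ is bounded, so assume throughout $A > 0$.

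The combinatorial core is to produce, for every $\epsilon > 0$, an infinite set $M_\epsilon$ of indices such that for every $m \in M_\epsilon$ and every $1 \leq k \leq m$,
\begin{align*}
a_m - a_{m-k} \geq (A - \epsilon)\,k.
\end{align*}
This is a Fekete-type extraction: were it to fail for some $\epsilon > 0$, one could peel off segments iteratively starting from any large $m_0$, telescoping to $a_{m_0} \leq O(1) + (A - \epsilon) m_0$ and contradicting $a_n/n \to A$. For $m \in M_\epsilon$ huge, and for $k$ in a window of the form $k \in [\delta m,\ (1 - \delta)m]$ with $\delta > 0$ to be tuned, consider the triangle $(x_0,\ T^k x_0,\ T^m x_0)$: its side lengths are $a_k$, at most $a_{m-k}$ (by the semi-contraction), and $a_m$. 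Combining the good lower bound with subadditivity gives
\begin{align*}
a_k + \dist(T^k x_0, T^m x_0) - a_m \leq a_{m-k} - (A - \epsilon)(m-k) \leq 2\epsilon(m-k),
\end{align*}
valid for $m-k$ large because $a_j/j \to A$. Rewriting this as $(1-\eta) a_k + \dist(T^k x_0, T^m x_0) \leq a_m$ with $\eta := 2\epsilon(m-k)/a_k$ and using $a_k \geq A\delta m$, the slack $\eta$ is $O(\epsilon/\delta)$, uniformly in $k$ in the window, and can be made arbitrarily small by shrinking $\epsilon$ relative to $\delta$.

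Applying \autoref{lemma:reverse_triangle} places $T^k x_0$ within $f(\eta)\,a_k$ of the unique point $\gamma_m(a_k)$ on the unit-speed parametrization $\gamma_m : [0, a_m] \to X$ of $[x_0, T^m x_0]$. For two indices $m_1 < m_2$ in $M_\epsilon$ and admissible $k$, the triangle inequality gives $\dist(\gamma_{m_1}(a_k), \gamma_{m_2}(a_k)) \leq 2 f(\eta)\,a_k$, and since both rays issue from $x_0$ the $\CAT(0)$ monotonicity of $t \mapsto \dist(\gamma_{m_1}(t), \gamma_{m_2}(t))/t$ (\autoref{sssec:convexity}) extends this to
\begin{align*}
\dist(\gamma_{m_1}(t), \gamma_{m_2}(t)) \leq 2 f(\eta)\, t \qquad \text{for all } 0 \leq t \leq a_k.
\end{align*}
Diagonalizing in $\epsilon \to 0$ and $m \to \infty$ along $M_\epsilon$ shows that for each fixed $t$ the family $\gamma_m(t)$ is Cauchy in $X$; completeness produces a geodesic ray $\gamma : [0,\infty) \to X$ based at $x_0$. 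Passing to the limit in $\dist(T^k x_0, \gamma_m(a_k)) \leq f(\eta)\,a_k$ yields $\dist(T^n x_0, \gamma(a_n)) = o(n)$, and combined with $|a_n - An| = o(n)$ and the isometric nature of $\gamma$, this gives $\dist(T^n x_0, \gamma(An)) = o(n)$, as desired.

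The main obstacle is the absence of local compactness, which rules out the Arzel\`a--Ascoli-type extraction one would use in a proper space. The substitute is the quantitative Cauchy argument above, powered by the reverse-triangle estimate of \autoref{lemma:reverse_triangle} and the $\CAT(0)$ monotonicity of distances along rays sharing a common origin; the delicate bookkeeping lies in coordinating the parameters $\epsilon$, $\delta$, $\eta$, and the window of admissible $k$ so that $\eta \to 0$ along a genuine subsequence of $M_\epsilon$.
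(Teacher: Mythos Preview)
Your approach is the same as the paper's (and Karlsson--Margulis): extract record-holder indices $M_\epsilon$, feed the resulting almost-reverse triangle inequality into \autoref{lemma:reverse_triangle}, and use $\CAT(0)$ convexity to assemble the ray as a limit of the segments $[x_0,T^m x_0]$. The gap is in how you bound the slack. You invoke the record-holder property as $a_m-a_k\geq (A-\epsilon)(m-k)$ and arrive at slack $\leq 2\epsilon(m-k)$, hence $\eta=2\epsilon(m-k)/a_k$, which forces the artificial window $k\in[\delta m,(1-\delta)m]$. Because that window scales with $m$, comparing $\gamma_{m_1}$ with $\gamma_{m_2}$ requires a common admissible $k$, hence $m_2\leq \tfrac{1-\delta}{\delta}m_1$; but consecutive elements of $M_\epsilon$ can have arbitrarily large ratios (record times of $a_n-(A-\epsilon)n$ may be sparse), so neither the Cauchy comparison nor the final tracking step (which needs \emph{every} large $n$ to fall in some window along the diagonal) goes through as written.

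The fix is to use the record-holder property in the other direction: $m\in M_\epsilon$ also gives $a_m-a_{m-k}\geq (A-\epsilon)k$, so the slack satisfies
\[
a_k+\dist(T^k x_0,T^m x_0)-a_m\;\leq\; a_k+a_{m-k}-a_m\;\leq\; a_k-(A-\epsilon)k\;\leq\; 2\epsilon k
\]
once $k\geq K_\epsilon$. Then $\eta\leq 2\epsilon k/a_k\leq 2\epsilon/A$ is \emph{uniform} in $k$ and $m$; the parameter $\delta$ disappears, the admissible range is simply $K_\epsilon\leq k\leq m$, and picking $N_i\in M_{\epsilon_i}$ with $N_i>K_{\epsilon_{i+1}}$ makes the intervals $[K_{\epsilon_i},N_i]$ cover all large $k$. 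This is precisely the paper's bookkeeping, after which both the Cauchy argument and the tracking estimate are immediate.
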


\autoref{thm:cat0_isometry} applies immediately to the direct integral of $\CAT(0)$-spaces to give the following mean version of the Multiplicative Ergodic Theorem.
\begin{corollary}
\label{cor:mean_non_commutative}
 Let $T\curvearrowright (\Omega,\mu)$ be an ergodic probability measure preserving system, and let $\crX\to \Omega$ be a bundle of $\CAT(0)$-spaces.
 Assume given a section $\sigma_0:\Omega\to\crX$ and an induced action of $T$ on $\crX$ by fiberwise isometries, such that the integrability assumptions from \autoref{sssec:induced_actions} are satisfied.
 
 Then there exists $A\geq 0$ and geodesic rays $\gamma_\omega \in \crX_\omega$ such that the orbits of $\sigma_0$ track sublinearly the geodesics $\gamma_\omega$.
 More precisely:
 \begin{align*}
  \frac 1 n \left( \int_{\Omega} \dist \big( T^n_\omega \sigma_0(\omega), \gamma_{T^n\omega}(A\cdot n) \big)^2 \, d\mu(\omega) \right)^{\frac 12} \xrightarrow{n\to +\infty} 0
 \end{align*}
\end{corollary}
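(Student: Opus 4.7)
The plan is to obtain \autoref{cor:mean_non_commutative} as a direct application of \autoref{thm:cat0_isometry} to the ambient space $Y := L^2(\crX,\mu)$, which is a complete $\CAT(0)$ space by the properties recorded after \autoref{def:direct_integral}. The construction of \autoref{sssec:induced_actions} lifts the fiberwise action to an isometry $\widetilde T$ of $Y$ (this really is an isometry, not just a semi-contraction, because the $T_\omega$ are isometries of the fibers and $\mu$ is $T$-invariant), and the integrability assumption in the statement is precisely what is needed to ensure $\widetilde T\sigma_0\in Y$, hence that $\sigma_0$ has a well-defined orbit in $Y$.

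Applying \autoref{thm:cat0_isometry} to the triple $(Y,\widetilde T,\sigma_0)$ yields a linear drift $A\geq 0$ and a geodesic ray $\Gamma:[0,\infty)\to Y$ with
\[
\frac{1}{n}\,\dist_Y\!\bigl(\widetilde T^n\sigma_0,\,\Gamma(An)\bigr)\xrightarrow[n\to\infty]{}0.
\]
Squaring this bound and expanding the $L^2$-distance fiberwise, the statement of the corollary reduces to the identification of the integrand on both sides. By the definition of the induced action in \autoref{eqn:section_pull_back} (in its pushforward form) one has $(\widetilde T^n\sigma_0)(\omega)=T^n_{T^{-n}\omega}\bigl(\sigma_0(T^{-n}\omega)\bigr)$; the change of variable $\omega\leftrightarrow T^n\omega$, permitted by $T$-invariance of $\mu$, then rewrites
\[
\dist_Y\!\bigl(\widetilde T^n\sigma_0,\,\Gamma(An)\bigr)^2 \;=\; \int_\Omega \dist\!\bigl(T^n_\omega\sigma_0(\omega),\,\Gamma(An)(T^n\omega)\bigr)^2\,d\mu(\omega).
\]

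It remains to unwrap the geodesic $\Gamma$. By the description of geodesics in direct integrals from \autoref{sssec:geodesics}, $\Gamma$ is encoded by a semi-density $\alpha\in L^2(\Omega,\mu)$ with $\int\alpha^2\,d\mu=1$ together with a measurable family of fiberwise geodesic rays $\widetilde\gamma_\omega\colon[0,\infty)\to\crX_\omega$ satisfying $\Gamma(t)(\omega)=\widetilde\gamma_\omega(t\,\alpha(\omega))$. Setting $\gamma_\omega(s):=\widetilde\gamma_\omega(s\,\alpha(\omega))$ gives geodesic rays in $\crX_\omega$ (of the form required by the statement, up to reparametrization by $\alpha$), and substituting into the previous display produces exactly the integral appearing in the corollary.

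The argument is essentially formal once the theory of direct integrals is in place; the main obstacle is purely notational, namely the bookkeeping of the pushforward/pullback conventions needed to reconcile the ``upstairs'' dynamics on $Y$ with the ``downstairs'' fiberwise statement, and the reparametrization by the semi-density $\alpha$ that appears intrinsically in the structure of geodesics in a direct integral. All the real work—completeness of $Y$, its $\CAT(0)$-property, and the classification of its geodesics—has already been done in \autoref{ssec:direct_integrals}.
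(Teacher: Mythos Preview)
Your approach is exactly the paper's: apply \autoref{thm:cat0_isometry} to the induced action on $L^2(\crX,\mu)$. The paper's proof is a single sentence to this effect, and you have correctly filled in the bookkeeping (the change of variables via $T$-invariance of $\mu$, and the disintegration of the geodesic $\Gamma$ via \autoref{sssec:geodesics}); your observation about the semi-density $\alpha$ forcing a fiber-dependent reparametrization of the $\gamma_\omega$ is a genuine subtlety that the paper's terse statement glosses over.
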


\begin{proof}[Proof of \autoref{cor:mean_non_commutative}]
 The result follows from \autoref{thm:cat0_isometry} by considering the induced action, in the sense of \autoref{sssec:induced_actions}, on the $\CAT(0)$-space $L^2(\crX,\mu)$.
\end{proof}

\begin{proof}[Proof of \autoref{thm:cat0_isometry}]
Define $x_n := T^n x_0$.
By the discussion in \autoref{sssec:linear_drift} and \autoref{sssec:semi-contractions}, the subadditive sequence $a_n := \dist(x_0,x_n)$ has linear growth:
\begin{align}
\label{eqn:lim_drift}
 \lim_N \frac 1N a_N = A \geq 0.
\end{align}
Assume that $A>0$, otherwise the theorem is immediate.

We will construct successive approximations to the tracking geodesic, and all our geodesic will start at the basepoint $x_0$.
Fix a sequence $\epsilon_i>0$ whose speed of decrease to zero will be described later.

\subsubsection{Picking the record holders}
For now freeze $\epsilon_i>0$.
Because of \autoref{eqn:lim_drift}, there exists $K_i$ such that
\begin{align}
\label{eqn:a_n_control}
 (A-\epsilon_i)\cdot n \leq a_n \leq (A+\epsilon_i)\cdot n \quad \forall K_i \leq n
\end{align}

The sequence $b_{n}= a_n - (A-\epsilon_i)\cdot n$ diverges to $+\infty$ as $n\to \infty$.
Thus there exists arbitrarily large $N_i$ such that $b_{N_i}$ is larger than all of the previous elements, i.e. we have
\begin{align}
 a_{N_i} - (A-\epsilon_i)\cdot N_i & \geq a_n - (A - \epsilon_i)\cdot n && \forall n\leq N_i \nonumber
 \intertext{or rewriting it:}
 a_{N_i} - a_n &\geq (A-\epsilon_i)\cdot (N_i-n) && \forall n\leq N_i
 \label{eqn:record}
\end{align}
Pick $N_i$ such that $N_i > K_{i+1}$ where $K_i$ is defined by \autoref{eqn:a_n_control}.

\subsubsection{Geometry of record holders}
\label{sssec:geometry_record}
We now rewrite the above inequalities in terms of distances, valid for all $x_n$ with $K_i\leq n \leq N_i$.
From \autoref{eqn:record} we find
\begin{align*}
 \dist(x_0, x_{N_i}) & \geq \dist(x_0, x_n) + (A-\epsilon_i)\cdot (N_i -n)\\
		     & \geq \dist(x_0, x_n) + \frac{A-\epsilon_i}{A+\epsilon_i} \cdot \dist(x_0,x_{N_i - n}) && \textrm{by \autoref{eqn:a_n_control}}\\
		     & \geq \dist(x_0, x_n) + \frac{A-\epsilon_i}{A+\epsilon_i} \cdot \dist(x_n,x_{N_i}) && \textrm{by semi-contraction.}
\end{align*}
The inequality just obtained is exactly the almost reverse triangle inequality to which \autoref{lemma:reverse_triangle} will apply.

\subsubsection{Constructing the geodesic}
Define $\gamma_i$ to be the geodesic connecting $x_0$ and $x_{N_i}$; it has length $a_{N_i}$.
Let also $x_{N_{i-1}}'$ be the point on the geodesic $\gamma_{i}$ at distance $a_{N_{i-1}}$ from $x_0$.
Recall that by construction we have $K_{i}<N_{i-1}<N_{i}$.
Therefore the inequality in \autoref{sssec:geometry_record} holds for $n=N_{i-1}$ and so \autoref{lemma:reverse_triangle} gives
\begin{align}
\label{eqn:fixed_time_bound}
 \dist(x_{N_{i-1}},x_{N_{i-1}}') \leq f\left( \frac{2\epsilon_i}{A+\epsilon_i} \right) \dist(x_0, x_{N_{i-1}})
\end{align}
Now for any $r\leq \dist(x_0,x_{N_{i-1}})$, we have the points $\gamma_i(r)$ and $\gamma_{i-1}(r)$.
For $r = \dist(x_0,x_{N_{i-1}})$ \autoref{eqn:fixed_time_bound} gives a bound for the distance between points of the form
\begin{align*}
 \dist(\gamma_{i-1}(r),\gamma_i(r))\leq f\left( \frac{2\epsilon_i}{A+\epsilon_i} \right) \cdot r.
\end{align*}
By the convexity property of distances in \autoref{sssec:convexity}(iii) it follows that the same bound holds for all $r\leq \dist(x_0,x_{N_{i-1}})$.

Pick now the sequence $\epsilon_i\to 0$ such that $f\left( \frac{2\epsilon_i}{A+\epsilon_i} \right)\leq 2^{-i}$; this is possible since $f(\epsilon)\to 0$ as $\epsilon\to 0$.
For fixed $r$ the sequence of points $\gamma_i(r)$ form a Cauchy sequence and converge to a point $ \gamma(r) $ and giving a geodesic ray $\gamma$ starting at $x_0$.

\subsubsection{Tracking property}
It remains to check that the orbit $\{x_n\}$ stays near the geodesic $\gamma$.
For any $k$ there is an $i$ such that $K_i \leq k \leq N_i$.
The triangle inequality and the previous bounds give:
\begin{align*}
 \dist(\gamma(A\cdot k), x_k) & \leq \dist(\gamma(Ak), \gamma_i(A k)) + \dist(\gamma_i(A k), \gamma_i(a_k)) + \dist(\gamma_i(a_k), x_k)\\
 & \leq 2^{-i+1}\cdot A \cdot k + |A\cdot k - a_k| + f\left( \frac{2\epsilon_i}{A+\epsilon_i} \right)\cdot a_k\\
 & \leq 2^{-i+1}\cdot A \cdot k + |A\cdot k - a_k| + 2^{-i} \cdot a_k
\end{align*}
It follows then immediately that $\frac 1k \dist(\gamma(A\cdot k), x_k) \to 0$, which ends the proof.
\end{proof}

\begin{remark}\leavevmode
\begin{enumerate}
 \item The above proof works in the general setting of uniformly convex, non-positively curved in the sense of Busemann metric spaces (see \cite{Karlsson_Margulis}).
 \item The geodesic in the conclusion of \autoref{thm:cat0_isometry} is unique, assuming the basepoint is fixed.
 This follows from the convexity property in \autoref{sssec:convexity}(iii).
 \end{enumerate}
\end{remark}

\subsection{Mean Kingman theorem}
\label{ssec:mean_kingman}

We end with an $L^2$-version of Kingman's Subadditive Ergodic Theorem, under a slightly stronger assumption.

\begin{proposition}
 Let $T \curvearrowright (\Omega,\mu)$ by an ergodic probability measure-preserving action, and let $U\curvearrowright L^2(\Omega,\mu)$ be the induced unitary operator on functions.
 Let $f_n\in L^2(\Omega,\mu)$ be a sequence of functions satisfying $f_n\geq 0$ pointwise $\mu$-a.e. and
 \begin{align*}
  f_{n+m} \leq f_n + U^n f_m \quad \textrm{pointwise $\mu$-a.e.}
 \end{align*}
 Denote the averages by $a_n:= \ip{\id_{\Omega}, f_n}$, which is a subadditive sequence with limit $\frac 1n a_n \to A$.
 Then we have
 \begin{align*}
  \lim_{n\to +\infty} \norm{\frac 1 n f_n - A}_{L^2} = 0.
 \end{align*}
\end{proposition}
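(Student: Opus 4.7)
My plan is to reduce the claimed $L^2$-convergence to the pointwise Kingman subadditive ergodic theorem (\autoref{thm:Kingman}) together with the $L^2$-maximal ergodic inequality. The key observation is that pointwise subadditivity in the form $f_{n+m}\leq f_n + U^n f_m$ automatically dominates the averages $f_n/n$ by the Hardy--Littlewood--Wiener ergodic maximal function of $f_1$, which already lies in $L^2$. Given this domination, the gap between the a.e. convergence produced by Kingman and the desired mean convergence is closed by the dominated convergence theorem.

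First I would apply \autoref{thm:Kingman} directly. The hypothesis $f_{n+m}\leq f_n+U^n f_m$ is exactly the subadditivity condition $f_{n+m}(\omega)\leq f_n(\omega)+f_m(T^n\omega)$, and since $\mu(\Omega)=1$ we have $f_1\in L^2(\Omega,\mu)\subset L^1(\Omega,\mu)$. Kingman's theorem then gives
\[
\frac{1}{n}f_n(\omega) \xrightarrow{n\to\infty} \inf_N \frac{1}{N}\int_\Omega f_N\, d\mu = A \quad \text{for $\mu$-a.e.\ $\omega$,}
\]
and $A\geq 0$ since $f_n\geq 0$. Note $A\leq \int f_1\, d\mu<\infty$, so the limit is finite.

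Next I would establish a pointwise domination by iterating subadditivity with $m=1$. An immediate induction on $n$ gives
\[
f_n \leq \sum_{i=0}^{n-1} U^i f_1 \quad \text{$\mu$-a.e.,}
\]
hence $0\leq \frac{1}{n}f_n \leq \frac{1}{n}\sum_{i=0}^{n-1} U^i f_1 \leq M f_1$, where $M f_1(\omega):=\sup_{n\geq 1}\frac{1}{n}\sum_{i=0}^{n-1} f_1(T^i\omega)$ is the ergodic maximal function of $f_1$. The classical $L^2$-maximal ergodic inequality yields $\|M f_1\|_{L^2}\leq 2\|f_1\|_{L^2}<\infty$, so $M f_1\in L^2(\Omega,\mu)$.

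Finally, the sequence $(f_n/n - A)^2$ is non-negative, tends to $0$ $\mu$-a.e.\ by the first step, and is dominated by the fixed integrable function $(M f_1 + A)^2\in L^1(\Omega,\mu)$. The dominated convergence theorem therefore gives
\[
\Big\|\frac{1}{n}f_n - A\Big\|_{L^2}^2 = \int_\Omega \Big(\frac{1}{n}f_n - A\Big)^2 d\mu \xrightarrow{n\to\infty} 0,
\]
as required. There is no serious obstacle in this argument: the only insight beyond standard tools is that pointwise subadditivity yields pointwise domination by a Birkhoff maximal average, after which the $L^2$-maximal inequality and dominated convergence do all the work.
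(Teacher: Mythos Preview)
Your argument is correct. Iterating subadditivity gives the pointwise bound $0\leq \tfrac{1}{n}f_n\leq Mf_1$, the $L^2$-maximal ergodic inequality puts $Mf_1$ in $L^2$, and then Kingman's pointwise theorem plus dominated convergence finish the job. There are no gaps.

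However, your route is genuinely different from the paper's. The paper deliberately avoids the pointwise Kingman theorem and instead argues entirely with \emph{mean} tools: it reduces the claim to $\limsup\norm{\tfrac{1}{N}f_N}^2\leq A^2$, bounds $\tfrac{1}{N}f_N$ pointwise by a Birkhoff-type average of $\tfrac{1}{k}f_k$ for a fixed block size $k$, applies von Neumann's mean ergodic theorem for $U^k$, and then uses a coprime trick (choosing two block sizes $k_1,k_2$ with $\gcd(k_1,k_2)=1$) to kill the contributions from nontrivial $U^k$-eigenspaces. What your approach buys is brevity and a completely standard ``a.e.\ convergence plus domination'' template; what the paper's approach buys is a self-contained mean proof of a mean statement, showing that the $L^2$-Kingman theorem does not logically require the (harder) pointwise Kingman theorem as input.
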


The proof below will use von Neumann's ergodic theorem.

\begin{proof}
 Note that $\ip{f_n,A\cdot \id_\Omega}=a_n\cdot A$, so expanding $\norm{\frac 1n f_n - A}^2$ it suffices to show that
 \begin{align*}
  \norm{\frac 1n f_n}^2 - A^2 \to 0.
 \end{align*}
 Using again $\ip{f_n,\id_\Omega}=a_n$ and Cauchy--Schwarz gives
 \begin{align*}
  \norm{\frac 1n f_n}^2 \geq \left(\frac{a_n}{n}\right)^2 \to A^2
 \end{align*}
 so it suffices to show that $\limsup \norm{\frac 1n f_n}^2 \leq A^2$.
 Fix now $k>0$ and divide $N$ with remainder as $N= k\cdot l + r$ with $0\leq r <k$.
 Applying the subadditivity assumption iteratively gives the pointwise bound
 \begin{align}
  f_N &\leq f_k + U^k f_k + \cdots + U^{k\cdot (l-1)} f_k + U^{k\cdot l} f_r \nonumber \\
  \intertext{and after dividing out by $N$ and applying straightforward estimates}
  \frac 1N f_N & \leq \frac 1l \left( \left(\frac 1k f_k\right) + \cdots + U^{k\cdot (l-1)} \left(\frac 1k f_k\right) \right) + O\left(\frac 1N\right).
  \label{eqn:vN_estimate}
 \end{align}
 Recall from the von Neumann Mean Ergodic Theorem applied to $U^k$ that for any $g\in L^2(\Omega,\mu)$ we have
 \begin{align*}
  \frac 1 l \left( g + U^k g + \cdots + U^{k\cdot (l-1)}g \right) \xrightarrow{L^2} P_k(g)
 \end{align*}
 where $P_k$ denotes orthogonal projection onto $U^k$-invariant vectors.
 Note also that the space of $U^k$-invariant vectors coincides with the direct sum of eigenspaces for $U$ with eigenvalues $\zeta_d$, where $\zeta_d^d=1$ are roots of unity and $d$ divides $k$.
 
 Pick now two arbitrarily large $k_1,k_2$ which are relatively prime, and apply the estimate from \autoref{eqn:vN_estimate} with $k=k_1,k_2$.
 From the assumed positivity of the functions, it follows that
 \begin{align*}
  \limsup \norm{\frac 1N f_N}^2 \leq \ip{P_{k_1}\left(\frac 1{k_1} f_{k_1}\right), P_{k_2}\left(\frac 1{k_2} f_{k_2}\right)  }
 \end{align*}
 Finally, note that $P_{k_i}(f_{k_i}) = a_{k_i} \cdot \id_\omega + h_i$ where $h_i$ is in the $U$-eigenspaces corresponding to non-trivial roots of unity dividing $k_i$.
 In particular since $k_1$ and $k_2$ are relative prime, it follows that the $h_i$ and $\id_\Omega$ are orthogonal.
 
 Since $\frac 1 {k_i} a_{k_i}$ tends to $A$ as $k_i\to \infty$, the desired upper bound follows.
\end{proof}

\paragraph{Acknowledgements}
These notes are based on lectures given at several summer schools.
I am very grateful to the organizers of the summer schools in Ilhabela, Brazil (January 2015, ``Holomorphic Dynamics School''), Luminy, France (July 2015, ``Translation Surfaces School''), and  Moscow, Russia (May 2016 ``Hyperbolic Geometry and Dynamics School'').

These notes were strongly influenced and owe a great debt to the point of view in the works of Kaimanovich, Karlsson, Ledrappier, Margulis, Monod, Zimmer.
I am also grateful for remarks and suggestions from Alex Eskin, Pascal Hubert, Misha Kapovich, Zemer Kosloff, Erwan Lanneau, Carlos Matheus, Martin M\"{o}ller, Nessim Sibony, Misha Verbitsky, Anton Zorich.
I am also grateful to the referee for a careful reading and many suggestions and comments that significantly improved the text.

This research was partially conducted during the period the author served as a Clay Research Fellow.

%====================================================
%====================================================
%		End of Main Article
%====================================================
%====================================================

%====================================================
%====================================================
%		Appendix
%====================================================
%====================================================

\appendix
%\section{}

%====================================================
%====================================================
%		End of Appendix
%====================================================
%====================================================

%====================================================
%====================================================
%		Bibliography
%====================================================
%====================================================
\bibliographystyle{sfilip}
\bibliography{MET_lectures}
%====================================================
\end{document}